\newtheorem{thm}{Theorem}[section]
\newtheorem{lem}{Lemma}[section]
\newtheorem{cor}{Corollary}[section]
\newtheorem{prop}{Proposition}[section]
\newtheorem{rem}{Remark}[section]
\theoremstyle{definition}
\begin{document}
\numberwithin{equation}{section}

\title[ Quaternionic projective invariance of  the $k$-Cauchy-Fueter complex and applications I.]
 {   Quaternionic projective invariance  of  the $k$-Cauchy-Fueter complex  and applications   I. }
\author{  Wei Wang}
\thanks{
Supported by National Nature Science Foundation in China (No.
11971425)  }\thanks{   Department of Mathematics,
Zhejiang University, Zhejiang 310027,
 P. R. China, Email:   wwang@zju.edu.cn}

\begin{abstract}   The $k$-Cauchy-Fueter complex in quaternionic analysis  is   the counterpart of the Dolbeault complex in  complex analysis.
In this paper, we find the   explicit   transformation formula of   these complexes under
${\rm SL}(n+1,\mathbb{H})$, which acts on $\mathbb{H}^{ n}$ as  quaternionic   fractional linear transformations. These transformation formulae have several
interesting applications to  $k$-regular functions, the quaternionic counterpart of holomorphic functions, and geometry of domains. They allow  us to construct
  the $k$-Cauchy-Fueter complex over locally   projective flat manifolds explicitly
and introduce various notions of pluripotential theory on this kind of manifolds. We also introduce a quaternionic projectively invariant operator from
the quaternionic Monge-Amp\`{e}re operator, which can be used to find projectively invariant defining density
of a domain,  generalizing
Fefferman's construction in  complex analysis.   \end{abstract}
\keywords{ ${\rm SL}(n+1,\mathbb{H})$-invariance; the $k$-Cauchy-Fueter complex;  quaternionic   fractional linear transformations;  $k$-regular functions; locally
quaternionic  projective flat manifolds;  quaternionic  linearly convex domains; the quaternionic Monge-Amp\`{e}re operator; Fefferman defining density.  }

\maketitle
\section{Introduction}
Since 1980s, people have been interested in developing analysis of several quaternionic  variables \cite{P88}. The quaternionic counterpart of the Cauchy-Riemann
operator
is a family of operators   acting on  $ \odot^{k} \mathbb{C}^2 $-valued functions, called the $k$-Cauchy-Fueter operator,
$k=0,1,\ldots$,  because the group ${\rm SU}(2)$  of unit quaternionic numbers has a family of  irreducible representations $ \odot^{k} \mathbb{C}^2 $,   while the
group of unit complex numbers   has only one irreducible representation space $\mathbb{C}$. As the quaternionic counterpart of the  Dolbeault complex,
    the $k$-Cauchy-Fueter  complexes  on the flat space $\mathbb{H}^n$ are  known explicitly now (cf. \cite{bS} \cite{bures} \cite{CSSS} \cite{CSS} \cite{Wa10}
and references therein):
\begin{equation}\label{eq:CF-complex}
0\rightarrow \Gamma(D,  \mathcal{ {V}}_0 )
 \xrightarrow{ \mathcal{ {D}}_{0} } \Gamma(D,  \mathcal{ {V}}_1   )\xrightarrow{  \mathcal{ {D}}_{1} }  \cdots \xrightarrow{  \mathcal{{D}}_{2n-2}}
 \Gamma(D,  \mathcal{{V}}_{2n-1} )\rightarrow0,
 \end{equation}for a domain $D$ in $\mathbb{H}^n$,
where  $\Gamma(D,  \mathcal{ {V}}_j )$ is the space of $ \mathcal{{V}}_j $-valued smooth functions, and
\begin{equation*}
   \mathcal{{V}}_j : =\left\{ \begin{array}{ll}  \odot^{k-j}\mathbb{C}^{2 }\otimes
 \wedge^j\mathbb{C}^{2n*} ,\qquad \qquad &j=0,\cdots, k ,\\
  \odot^{j -k-1}\mathbb{C}^{2*}\otimes\wedge^{j+1}\mathbb{C}^{2n*},\qquad &j=k+1,\cdots, 2n-1.
   \end{array}\right.
 \end{equation*} They have several interesting applications to quaternionic analysis, e.g. to the quaternionic Monge-Amp\`{e}re operator and  quaternionic
 plurisubharmonic functions (cf. \cite{wan-wang,wang21} and references therein).
A function   $f\in \Gamma( D,  \odot^k \mathbb{C}^2 )$ is   called {\it $k$-regular} if
$
   \mathcal{ D}_0f=0
$ on $D$.
The space of all $k$-regular functions on  $D $ is denoted by $\mathcal{O}_k(D)$. Because of Hartogs'  phenomenon  for
  $k$-regular functions (see e.g. \cite{CSSS,P88,Wa10}), it is a fundamental problem to characterize domains of $k$-regularity, the  quaternionic counterpart of
  domains of holomorphy, and domains with vanishing cohomology of   the $k$-Cauchy-Fueter complex.

It is a useful and   important fact in complex analysis that the product of two  holomorphic functions is also holomorphic, and so is the composition of two
holomorphic transformations.
Moreover, the Cauchy-Riemann operator and   Dolbeault complex are invariant under   biholomorphic transformations,
and so they exist on complex manifolds.
 The counterpart of a holomorphic transformation  is the notion of a regular transformation $f:\mathbb{H}^{ n}\rightarrow \mathbb{H}^{ n}$, i.e. each component of
 $f$ is a
 $1$-regular $  \mathbb{H} $-valued function. But the composition of a $k$-regular function with a regular transformation is usually
 not $k$-regular because of non-commutativity. It is necessary to known under which transformations of $ \mathbb{H}^{ n} $ the $k$-regularity is preserved, i.e,  to clarify the
 invariant group of  the $k$-Cauchy-Fueter operator and    complex.
Liu-Zhang  \cite{Zhang} constructed and investigated invariant operators on the    quaternionic hyperbolic space under the action of ${\rm Sp}(n ,1) $ by using
representation theory, which coincide with the $k$-Cauchy-Fueter operator for $k\geq 1$.

The  Cauchy-Riemann operator  is unique in the sense that
it has an invariant group of infinite dimensions, while for all known generalizations, such as the Dirac operator    in
Clifford analysis and the tangential Cauchy-Riemann operator etc., their invariant groups are only  of finite dimensions. But they are still large enough
to have various
important applications (cf. e.g. \cite{PQ,Ryan95}).  More generally, it is an active direction to investigate the function theory of  conformally invariant
operators
of   higher spins   (cf. \cite{BEV,BDLSW,DWR17,ER,ES,LSW}  and references therein). On $4$-dimensional Minkowski
space, they are massless field operators for higher spins in physics, which are systematically investigated by Penrose et. al. \cite{PR1,PR2} with the help of
conformal
invariance. Operators of higher spins on the Euclidean space and  on the Minkowski space have the same complexification.
They are also explored from the point of view of representation theory by Frenkel-Libine
\cite{FL1,FL2}.

The $1$-Cauchy-Fueter complex has been studied by using
  commutative algebra and  computer algebra method   since  90s (cf. \cite{CSSS} and references therein).  Meanwhile,    Baston   \cite{Ba} constructed a
  family of   quaternionic  complexes over
 complexified
quaternionic-K\"ahler manifolds  by using  the twistor method, generalizing  Eastwood-Penrose-Wells result  for $n=1$ \cite{Eastwood}.
The twistor construction implies the invariance of complexes   under the action of ${\rm SL}(2n+2,\mathbb{C})$, which was used to find explicit form of operators
in the  complexified version of   complexes \cite{bS} \cite{bures}   \cite{CSS}. See also \cite{CC,CaSS,CS,Salac} and references therein for the construction of
invariant differential operators and complexes. Several interesting
differential complexes over curved  manifolds have been constructed from   BGG sequences \cite{CSS01} \cite{CS2} associated to a   semisimple Lie algebra $\mathfrak g$ and a parabolic subalgebra. This construction can be applied to quaternionic manifolds. But the kernel  of the first operator of a BGG sequence  is a    finite dimensional irreducible representation  of $\mathfrak g$, while for the $k$-Cauchy-Fueter complex, the kernel  of  the first operator (i.e. the $k$-Cauchy-Fueter operator) is  of infinite dimensional. So it is not a
  BGG sequence.
 In this paper, we find the transformation formula of each operator  $ \mathcal{ D}_j $ in \eqref{eq:CF-complex} under the action of
${\rm SL}(n+1,\mathbb{H})$, which acts on $\mathbb{H}^{ n}$ as  quaternionic   fractional linear transformations. This transformation formulae have several
important applications to  $k$-regular functions  and geometry of domains.

Let $  \mathbb{C}^{2 }$  be the standard   $\text {GL}(1,\mathbb{H}
  )$-module and let $  \mathbb{C}^{2n }$
  be   the standard ${\rm {GL}}(n,\mathbb{H})$-module.
Let $\mathbb{C}^{ 2*}$ and  $\mathbb{C}^{ 2n *}$ be   modules dual to  $\mathbb{C}^{ 2}$ and  $\mathbb{C}^{ 2n }$ , respectively. They are trivially extended to be
  \begin{equation*}
     G_0  =S (\text {GL}(1,\mathbb{H}
  )\times {\rm {GL}}(n,\mathbb{H}))=(\text {GL}(1,\mathbb{H}
  )\times {\rm {GL}}(n,\mathbb{H}))\cap {\rm
 {SL}}(n+1,\mathbb{H})
  \end{equation*}
  modules.
It is convenient to identify a $  \odot^{\sigma}\mathbb{C}^{2}
\otimes  \wedge^\tau\mathbb{C}^{2n*}$-valued function $f$ with     a function in variables $\mathbf{q}\in \mathbb{H}^n$,   $s_{ {A}'}\in \mathbb{C}^2$  and
Grassmannian variables
$\omega^A$, which is homogeneous of degree $\sigma$ in $s_{A'}$ and  of degree $\tau$ in $\omega^{  A}$, i.e.
\begin{equation} \label{eq:f-supervariables}
  f= f^{\mathbf{A}'}_{\mathbf{A}}(\mathbf{q})s_{\mathbf{A}'}\omega^{\mathbf{A} }
\end{equation}
where $s_{\mathbf{A}'}:=s_{A_1'} \ldots  s_{A_\sigma'}$ for the multi-index $\mathbf{A}' = A_1' \cdots A_\sigma'$, and  $\omega^{ \mathbf{A}  } :=\omega^{ A_1 }
\cdots \omega^{A_\tau} $ for the multi-index
$\mathbf{A} = A_1 \cdots  A_\tau$  ($ {A}_j=0,\ldots, 2n-1$, $A_l'=0',1'$). Here and in the sequel, We   use the Einstein convention of taking summation for
repeated indices.

Write
  an element $g  \in{\rm
 {SL}}(n+1,\mathbb{H}) $ as
\begin{equation}\label{eq:g-H}
g^{-1}=\left(  \begin{array}{cc}\mathbf   {a}_{1\times 1}&\mathbf   {b}_{1\times n}\\
 \mathbf  {c}_{n\times 1}& \mathbf  {d}_{n\times n}
   \end{array}\right)
\end{equation}
where $ \mathbf  {a},\mathbf   {b},
\mathbf   {c}$ and $\mathbf   {d}$ are  quaternionic    matrices. It defines a fractional linear transformation of $\mathbb{H}^n$:
\begin{equation} \label{eq:g.q}
  T_{g^{-1}}:\mathbf{z}\rightarrow  g^{-1}.\mathbf   {q} :=(\mathbf   {c}+ \mathbf  {d}\mathbf   {q} ) ( \mathbf  {a}+\mathbf   {b}\mathbf   {q})^{-1} ,
\end{equation}
Denote
\begin{equation*}\begin{split}
  J_1 (g^{-1} ,\mathbf{q} )&:=   \mathbf{a}+\mathbf{b}\mathbf{q} \in  \mathbb{H} ,\\
   J_2(g^{-1} ,\mathbf{q} )&:=  \mathbf{d} -(\mathbf{c}+\mathbf{d}\mathbf{q} ) (\mathbf{a}+\mathbf{b}\mathbf{q})^{-1}\mathbf{b} \in {\rm {GL}}(n,\mathbb{H}).
 \end{split} \end{equation*}

If $j\leq k $, an element $g$ in \eqref{eq:g-H} acts on  $f\in \Gamma(\mathbb{H}^{ n}, \mathcal  {V}_j )$ given by \eqref{eq:f-supervariables} as
 \begin{equation}\label{eq:Uj<}\begin{split}
       [\pi_j(g)f] (\mathbf{q}):=&\frac {f_{\mathbf{A}}^{\mathbf{A}'}(g^{-1}.\mathbf{q}) }{|\mathbf a+\mathbf b\mathbf q|^{2(j+1)}}  J_1 (g^{-1} ,\mathbf{q} ) ^{-1}
       .
       s_{\mathbf{A}'}J_2 (g^{-1} ,\mathbf{q} ) . \omega^{\mathbf{A} }.
   \end{split} \end{equation}
  If $j\geq k $,
 we write $ f\in \Gamma(\mathbb{H}^{ n}, \mathcal  {V}_j ) $ as
\begin{equation}\label{eq:f-supervariables>}
  f= f_{\mathbf{A} \mathbf{A}'} s^{\mathbf{A}'} \omega^{\mathbf{A} },
\end{equation}  where we use
   $ s^{ {A}'} $ as coordinate functions on $\mathbb{C}^{2 *}$.
An element $g$ in \eqref{eq:g-H}  acts  as
\begin{equation}\label{eq:Uj>}\begin{split}
       [\pi_j(g)f] (\mathbf q):=&\frac {f_{\mathbf{A}\mathbf{A}'}(g^{-1}.\mathbf q) }{|\mathbf a+\mathbf b\mathbf  q|^{2(j+1)}}  J_1 (g^{-1} ,\mathbf{q}
       )    . s^{\mathbf{A}'}J_2 (g^{-1} ,\mathbf{q} )  . \omega^{\mathbf{A} } .
   \end{split} \end{equation}

   $\pi_j$ is not  a real representation on $  \Gamma(\mathbb{H}^{ n}, \mathcal  {V}_j )$, because for $f\in \Gamma(\mathbb{H}^{ n}, \mathcal  {V}_j )$,
   $\pi_j(g)f$
   is
   singular on the quaternionic hyperplane
   \begin{equation*}
  \mathcal{L}_g:=\left\{ q\in\mathbb{H}^n;   \mathbf{a}+\mathbf{b q}=0\right\}.
   \end{equation*}
  But outside of singularities, it still satisfies the identity of a  representation:
  \begin{equation}\label{eq:representation}
     \pi_j(g_1)\pi_j(g_2)f=\pi_j(g_1g_2)f  .
  \end{equation}

\begin{thm}\label{thm:k-invariant}
   $\mathcal  {D}_j  $ is   ${\rm SL}(n+1,\mathbb{H})$-invariant, i.e.
\begin{equation*}\mathcal  {D}_j (\pi_j(g)f)=
    \pi_{j+1}(g) \mathcal  {D}_j f.
\end{equation*}
for any $g\in {\rm SL}(n+1,\mathbb{H})$ and $f\in \Gamma(\mathbb{H}^{ n}, \mathcal  {V}_j )$.
\end{thm}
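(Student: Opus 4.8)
The plan is to reduce the identity to a short list of generators of $\mathrm{SL}(n+1,\mathbb{H})$ and to verify the intertwining on each of them. The reduction rests on the partial multiplicativity \eqref{eq:representation}: if $\mathcal{D}_j(\pi_j(g_i)f)=\pi_{j+1}(g_i)\mathcal{D}_j f$ holds for $g=g_1$ and for $g=g_2$ and all $f$, then, writing $\mathcal{D}_j(\pi_j(g_1g_2)f)=\mathcal{D}_j(\pi_j(g_1)\pi_j(g_2)f)$ by \eqref{eq:representation} and applying the hypothesis first for $g_2$ and then for $g_1$, one gets $\pi_{j+1}(g_1)\pi_{j+1}(g_2)\mathcal{D}_j f=\pi_{j+1}(g_1g_2)\mathcal{D}_j f$; since both sides of the claimed identity are real-analytic in $\mathbf{q}$ on the connected set $\mathbb{H}^n\setminus\mathcal{L}_g$, the identity for a product follows from the identities for the factors. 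Relative to the block splitting $\mathbb{H}^{n+1}=\mathbb{H}\oplus\mathbb{H}^n$, the connected group $\mathrm{SL}(n+1,\mathbb{H})=\mathrm{SU}^\ast(2n+2)$ is generated (Bruhat decomposition, or Dieudonn\'e's description of $\mathrm{SL}$ over $\mathbb{H}$ by elementary matrices) by three families: the \emph{translations} $\mathbf{q}\mapsto\mathbf{q}+\mathbf{c}$ ($\mathbf{a}=1,\mathbf{b}=0,\mathbf{d}=I$), the \emph{Levi part} $\mathbf{q}\mapsto\mathbf{d}\mathbf{q}\mathbf{a}^{-1}$ with $(\mathbf{a},\mathbf{d})\in(\mathrm{GL}(1,\mathbb{H})\times\mathrm{GL}(n,\mathbb{H}))\cap\mathrm{SL}(n+1,\mathbb{H})$ ($\mathbf{b}=\mathbf{c}=0$), and the \emph{transvections} $\mathbf{q}\mapsto\mathbf{q}(1+\mathbf{b}\mathbf{q})^{-1}$ ($\mathbf{a}=1,\mathbf{c}=0,\mathbf{d}=I$). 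A unifying elementary fact is that the differential of $T_{g^{-1}}$ sends a tangent vector $\delta\mathbf{q}$ to $J_2(g^{-1},\mathbf{q})\,\delta\mathbf{q}\,J_1(g^{-1},\mathbf{q})^{-1}$, so the same matrices $J_1,J_2$ appearing in \eqref{eq:Uj<}--\eqref{eq:Uj>} govern the chain rule for $\mathcal{D}_j$.

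For a translation $J_1=1$, $J_2=I$, $|\mathbf{a}+\mathbf{b}\mathbf{q}|=1$, so $\pi_j(g)f=f(\,\cdot+\mathbf{c}\,)$ with the fiber variables $s^{\mathbf{A}'},\omega^{\mathbf{A}}$ unchanged; since $\mathcal{D}_j$ has constant coefficients this case is immediate. For the Levi part, $\pi_j(g)$ is the composite of the linear substitution $\mathbf{q}\mapsto\mathbf{d}\mathbf{q}\mathbf{a}^{-1}$ in the base, the induced $\mathrm{GL}(n,\mathbb{H})$-action on $\wedge^\bullet\mathbb{C}^{2n}$ through $\omega^{\mathbf{A}}$, the $\mathrm{GL}(1,\mathbb{H})$-action on $\odot^\bullet\mathbb{C}^2$ through $s^{\mathbf{A}'}$ (or $S_{\mathbf{A}'}$), and a fixed power of $|\mathbf{a}|$. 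The required identity is exactly the $\mathrm{GL}(1,\mathbb{H})\times\mathrm{GL}(n,\mathbb{H})$-equivariance of $\mathcal{D}_j$ (equivalently, after complexification, its $\mathrm{GL}(2,\mathbb{C})\times\mathrm{GL}(2n,\mathbb{C})$-equivariance, which is part of the known twistor structure of the complex), since $\mathcal{V}_j=\odot^{k-j}\mathbb{C}^2\otimes\wedge^j\mathbb{C}^{2n}$ are natural tensor functors and each $\mathcal{D}_j$ is assembled from the Cauchy--Fueter-type operators $\partial_{q^{AA'}}$ together with the $\odot$- and $\wedge$-operations on $s^{A'}$ and $\omega^A$. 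The only thing to check is that the exponent $2(j+1)$ of $|\mathbf{a}|$ and the transposes in $J_1^t,J_2^t$ are the ones forced by matching homogeneity degrees on the two sides, which is routine.

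The heart of the matter — and where I expect the real work — is the transvection $g$ with $\mathbf{a}=1,\mathbf{c}=0,\mathbf{d}=I$, for which $g^{-1}.\mathbf{q}=\mathbf{q}(1+\mathbf{b}\mathbf{q})^{-1}$, $J_1(g^{-1},\mathbf{q})=1+\mathbf{b}\mathbf{q}$, $J_2(g^{-1},\mathbf{q})=I-\mathbf{q}(1+\mathbf{b}\mathbf{q})^{-1}\mathbf{b}$, and the weight is $|1+\mathbf{b}\mathbf{q}|^{-2(j+1)}$. Applying $\mathcal{D}_j$ to $\pi_j(g)f$ and using the chain rule together with the Jacobian formula above, one obtains the term in which all derivatives fall on $f(g^{-1}.\mathbf{q})$ — which, after reorganizing the $J_1,J_2$ factors, is precisely $\pi_{j+1}(g)\mathcal{D}_j f$ — plus a collection of \emph{inhomogeneous} terms in which at least one derivative falls on a prefactor, i.e. on $|1+\mathbf{b}\mathbf{q}|^{-2(j+1)}$, on $J_1^{-1}.s^{\mathbf{A}'}$ (resp. $J_1^t.S_{\mathbf{A}'}$), or on $J_2^t.\omega^{\mathbf{A}}$. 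The proof then reduces to showing that the sum of all the inhomogeneous terms vanishes. This cancellation is the crux: it is exactly what forces the specific weight $2(j+1)$ and the specific fiber twists $J_1^{-1}$ versus $J_1^t$ in the two regimes $j<k$ and $j\ge k$ (the passage through $j=k$ being consistent since $\odot^0\mathbb{C}^2$ is trivial). After differentiating the prefactors the inhomogeneous terms collapse to a finite, explicit algebraic identity among the structure constants of the contraction $s^{A'}\omega^{A}\mapsto\partial_{q^{AA'}}$ and the weights of the $\mathrm{GL}(1,\mathbb{H})\times\mathrm{GL}(n,\mathbb{H})$-modules $\mathcal{V}_j$ — a representation-theoretic identity of Casimir/weight type rather than brute-force quaternionic calculus — and verifying it is the main obstacle.

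Two alternatives are worth recording. First, a twistor route: Baston's complexified quaternionic complex is $\mathrm{SL}(2n+2,\mathbb{C})$-invariant, the flat $k$-Cauchy--Fueter complex is its restriction to the real form $\mathbb{H}^n\subset\mathbb{C}^{2n}$, and $\mathrm{SL}(n+1,\mathbb{H})\hookrightarrow\mathrm{SL}(2n+2,\mathbb{C})$; once one identifies the actions $\pi_j$ of \eqref{eq:Uj<}--\eqref{eq:Uj>} with the restrictions of the natural $\mathrm{SL}(2n+2,\mathbb{C})$-action on the complexified bundles, the invariance follows. Second, an infinitesimal route: differentiate $\pi_j$ to obtain $d\pi_j\colon\mathfrak{sl}(n+1,\mathbb{H})\to\mathrm{End}\,\Gamma(\mathbb{H}^n,\mathcal{V}_j)$ and verify $\mathcal{D}_j\,d\pi_j(X)=d\pi_{j+1}(X)\,\mathcal{D}_j$ for $X$ ranging over generators of $\mathfrak{n}^-$, $\mathfrak{m}$, $\mathfrak{n}^+$; only $X\in\mathfrak{n}^+$ is nontrivial, its verification being the infinitesimal form of the cancellation above, and connectedness of $\mathrm{SL}(n+1,\mathbb{H})$ upgrades this to the global statement on $\mathbb{H}^n\setminus\mathcal{L}_g$. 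I would nevertheless keep the direct computation as the primary argument, since the explicit transformation formulae are exactly what the applications in the paper require.
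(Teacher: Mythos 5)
You have correctly identified the architecture of the argument (reduction to a parabolic-type set of generators, triviality of translations, weight-matching for the Levi part, and a cancellation of inhomogeneous terms for the unipotent part), and your second ``alternative'' — differentiate to the Lie algebra and verify the intertwining on $\mathfrak{n}^+$, $\mathfrak{m}$, $\mathfrak{n}^-$ — is in fact the route the paper takes. But as written your proposal has a genuine gap: the entire content of the theorem is concentrated in the step you defer. You state that after the chain rule the inhomogeneous terms ``collapse to a finite, explicit algebraic identity \ldots and verifying it is the main obstacle,'' and you never verify it. That identity is not a one-line Casimir/weight observation: in the paper it occupies all of Section 3 and splits into three separate computations for $j<k$, $j=k$, $j>k$, each requiring the commutators $[\partial^{A'}\widetilde d_{A'},Y]$, $[\partial^{A'}\widetilde d_{A'},(\mathbf{b}\mathbf{z}).s^{B'}\partial_{B'}]$, $[\partial^{A'}\widetilde d_{A'},(\mathbf{z}\mathbf{b})^t.\omega^{A}\partial_A]$ (Lemma \ref{lem:d-Omega}) together with the $\varepsilon$-identity $\mathbf{a}\varepsilon+\varepsilon\mathbf{a}^t=\operatorname{tr}\mathbf{a}\,\varepsilon$ (Lemma \ref{lem:M-varepsilon}), which is what makes the $\operatorname{tr}(\mathbf{b}\mathbf{z})$ and $\Omega$ terms cancel against the weight $j+1$. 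Relatedly, your Levi case is not purely functorial: $\partial^{A'}d_{A'}$ and $S^{A'}d_{A'}$ are contractions of a raised and a lowered primed index and are only $\mathrm{SL}(2)$-equivariant, not $\mathrm{GL}(2)$-equivariant; the $\operatorname{tr}\mathbf{a}$ defect produced by Lemma \ref{lem:M-varepsilon} must be absorbed by the determinant prefactor, so even that case requires the same lemma.

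A second, more technical issue with your \emph{primary} route (direct group-level computation on $\mathbb{H}^n$): the fiber actions of $J_1^{-1}$, $J_1^t$, $J_2^t$ on $s^{A'}$, $S_{A'}$, $\omega^A$ are defined only through the embedding $\tau$ into complex matrices, and the operators $\nabla_{AA'}$ satisfy the chain rule $\nabla_{AA'}[\widetilde F(\tau(\mathbf{q}))]=\partial_{AA'}\widetilde F(\tau(\mathbf{q}))$ only for restrictions of holomorphic $\widetilde F$, since $\tau(\mathbb{H}^n)$ is a totally real subspace of $\mathbb{C}^{2n\times 2}$ and the entries $\mathbf{z}_{AA'}$ are not independent there. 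The paper avoids this friction by proving the invariance for the complexified complex on $\mathbb{C}^{2n\times 2}$ under $\mathrm{SL}(2n+2,\mathbb{C})$ (reduced to $\mathfrak{sl}(2n+2,\mathbb{C})$ via Proposition \ref{thm:DU-UD-X}, whose real-analyticity argument parallels your reduction-to-generators step) and only then restricting along $\tau$ to polynomials on $\mathbb{H}^n$. If you want to keep the direct quaternionic computation as primary, you must either carry out the cancellation explicitly in that setting or pass through the complexification as the paper does; in either case the missing cancellation has to be supplied before the proof is complete.
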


The invariance implies that if  $f$ is $k$-regular on a domain $D\subset\mathbb{H}^n$, then
 \begin{equation}\label{eq:k-regular-g}
   \frac {1}{|\mathbf{a}+\mathbf{bq}|^{2 }}  (\mathbf{a}+\mathbf{b q})^{-1}   .  f((\mathbf{c}+\mathbf{d q})(\mathbf{a}+\mathbf{b q})^{-1})
 \end{equation}
 is also $k$-regular  on  $g .D \setminus\mathcal{L}_{g }$  for any $
   g \in {\rm
 {SL}}(n+1,\mathbb{H})
$.
 In particular, if we take a $\mathcal  {V}_0 $-valued constant function $f=s_{\mathbf{A}'}$, then the rational function
\begin{equation}\label{eq:k-regular-fraction}
   \frac {1}{|\mathbf{a}+\mathbf{bq}|^{2 }}  (\mathbf{a}+\mathbf{b q})^{-1}   . s_{\mathbf{A}'}
 \end{equation}
 is  $k$-regular. This allows us to introduce the quaternionic version of the
Fantappi\`e transformation
 \eqref{eq:Fantappie }, and leads to
   an interesting question when any $k$-regular function on a subset of $\mathbb{H}^n$  is the
 superposition of the simple rational functions of the form \eqref{eq:k-regular-fraction}.

 A domain $D\subset \mathbb{H}^n$ is called {\it  (quaternionic)  linearly convex} if for any $\mathbf{p}\in\partial D$, there is an hyperplane of quaternionic
dimension $n-1$ passing through $\mathbf{p}$ and not intersecting $D$.  This notion is the generalization of
the complex one. As a consequence, a linearly convex domain is a   domain  of $k$-regularity.

 A manifold  is called {\it locally (quaternionic) projective flat} if    it has coordinates
charts $\{(U_{\alpha},\phi_{\alpha})\}$ with $\phi_{\alpha}:U_{\alpha}\rightarrow \mathbb{H}^n$  and transition  maps
\begin{equation}\label{eq:charts}
   \phi_{\beta}\circ \phi_{\alpha}^{-1}: \phi_{\alpha}(U_{\alpha}\cap U_{\beta})\longrightarrow \phi_{\beta}(U_{\alpha}\cap U_{\beta})
\end{equation}
  given by the induced action \eqref{eq:g.q} for some  $g \in \text{SL} (n+1,\mathbb{H}).$ If $\Gamma$ is a discrete subgroup of $ \text{SL} (n+1,\mathbb{H})$,
  $\mathbb{H}P^{n }/\Gamma$
is a locally  projective flat manifold.
The  quaternionic hyperbolic space can be realized as  the unit ball $  B^{4n } $, whose  group of  isometric
automorphisms is ${\rm Sp}(n ,1) \subset \text{SL} (n+1,\mathbb{H})$. If $\Gamma$ is a discrete subgroup of ${\rm Sp}(n ,1)$, then $  B^{4n
}/\Gamma$ is a locally   projective flat manifold.  In particular,  if $\Gamma$  is a cocompact or   convex cocompact
 subgroup of ${\rm Sp}(n ,1) $,
then $  B^{4n }/\Gamma$ is a compact locally   projective flat manifold without   or with  boundary (a
 spherical  quaternionic contact manifold) \cite{Shi-Wang}.

 $J_\mu$ is a cocycle, i.e.
\begin{equation}\label{eq:pi-q}
    J_\mu( g_2^{-1}  g_1^{-1}   ,\mathbf{q} ) =J_\mu(g_2^{-1} ,g_1^{-1} .\mathbf{q}
    ) J_\mu(g_1^{-1} ,\mathbf{q}),\qquad  \mu=1,2.
\end{equation}
$ J_1 ^{-1} $ can be used to glue trivial $  \mathbb{C}^{2  }$-bundles to obtain the bundle $H $. We  use $J_1  $  to glue trivial $  \mathbb{C}^{2 * }$-bundles to obtain the bundle $H^*  $. Here the action $ J_1 $ on the
representation  $  \mathbb{ C}^{2 * }$ is dual to the action of $J_1 ^{-1} $.  While $J_2 $ can be used to glue trivial $
\mathbb{C}^{2n*}$-bundles
to obtain the bundle $E^* $. Let
$\wedge^{\tau} {E}^*$ be the $\tau$-th exterior product of $ {E}^*$, and let $\odot^{\sigma}{H}$ and  $\odot^{\sigma}{H}^* $  be the $\sigma$-th symmetric products
of $ H$ and  $ {H}^* $, respectively.
There also exists a distinguished line bundle $\mathbb{R}[-1]$ so that
\begin{equation*}
   \wedge^{4n} T^*M\cong\mathbb{R}[-2n-2],
\end{equation*}
where $\mathbb{R}[-l]=\otimes^l\mathbb{R}[-1]$, and $\mathbb{C}[-1]\cong \wedge^2{H}^*$. Denote
$
  V[-l]:=V\otimes \mathbb{R}[-l] 
$ for a vector bundle $V$.
On a  locally   projective flat manifold $M$, we have the the $k$-Cauchy-Fueter complex:
 \begin{equation}\label{eq:quaternionic-complex-diff}
0\rightarrow \Gamma(M,  \mathcal{ V }_0 )
 \xrightarrow{ \mathcal{ {D}}_{0} } \Gamma(M,   \mathcal{ V }_1   )\xrightarrow{  \mathcal{ {D}}_{1} }  \cdots \xrightarrow{  \mathcal{{D}}_{2n-2}}
 \Gamma(M,   \mathcal{ V }_{2n-1} )\rightarrow0,
 \end{equation}
where
\begin{equation*}
  \mathcal{ V }_j : =\left\{ \begin{array}{ll}   \odot^{k-j
}{H}\otimes \wedge^j {E}^*  [-j-1]  ,\qquad \qquad &j=0,\cdots, k ,\\
 \odot^{j -k-1}{H}^* \otimes \wedge^{ j+1}
{E}^*  [ -j-1]  ,\qquad &j=k+1,\cdots, 2n-1.
   \end{array}\right.
 \end{equation*}
$k=0,1,\ldots$. For $k=0$, $\mathcal  {D}_0 $ is
the {\it  Baston operator}
$
\triangle:  \Gamma\left(  M,\mathbb{R} [ -1]\right)
\longrightarrow\Gamma \left(   M, \wedge^2 {E}^* [ -2] \right).
$ A    upper
semicontinuous  section of $  \mathbb{R} [-1]$
   is said to be  {\it plurisubharmonic}  if    $\triangle u$ is
a closed positive $2$-current. The quaternionic Monge-Amp\`{e}re operator on a  locally   projective flat manifold is defined as $(\triangle u)^n: \Gamma\left( M,
\mathbb{R} [ -1]\right)
\longrightarrow\Gamma \left(  M,  \wedge^{2n} {E}^* [ -2n] \right)$.

Recall that a {\it quaternionic-K\"ahler manifold} $M$  is  a Riemannian manifold whose Levi-Civita connection    preserves
the   quaternionic structure, i.e.    the frame
bundle of $M$   reduces   to    a  principal  ${\rm {Sp}}(n ){\rm {Sp}}(1)$-bundle with a torsion-free connection.  The quaternionic hyperbolic  space $  B^{4n } $  is quaternionic-K\"ahler, and so is  $
B^{4n }/\Gamma$ for a discrete subgroup  $\Gamma$ of the isometric
  group ${\rm Sp}(n ,1)$ of the  quaternionic hyperbolic metric. But for   a discrete subgroup $\Gamma$  of $ \text{SL}
 (n+1,\mathbb{H})$,  the locally   projective flat manifold    $\mathbb{H}P^{n }/\Gamma$  is not quaternionic-K\"ahler in general, since the manifold may have nonvanishing torsion.
   The construction of locally   projective flat manifolds is easy, because   we don't need to construct   special    connections on
them.

 Alesker \cite{alesker2} constructed and investigated the quaternionic Monge-Amp\`{e}re operator on quaternionic-K\"ahler manifolds by    the twistor method and
 method of
   complexification  of such manifolds by Baston \cite{Eastwood}.   The quaternionic Monge-Amp\`{e}re operator in \cite{alesker2} is defined in terms of the
   quaternionic-K\"ahler connection, while
   on locally   projective flat manifolds, the quaternionic Monge-Amp\`{e}re operator is easily defined. Moreover, it allows us to introduce various notions of
   pluripotential theory on this kind of manifolds, in particular,
closed positive  currents and their ``integrals", etc.

The paper is organized as follows. In Section 2, we describe the complexified version of  the $k$-Cauchy-Fueter complex over the complex space $\mathbb{C}^{2n\times 2}$, on which $ {\rm SL}(2n+2,
\mathbb{C})$
acts   as complex fractional linear transformations. In Section 3, the $ {\rm SL}(2n+2, \mathbb{C})$-invariance of   the complexified  $k$-Cauchy-Fueter
complex is proved.
It is reduced to its $\mathfrak  {sl}(2n+2, \mathbb{C})$-invariance, which can be checked more easily and directly.  In Section 4,   the  ${\rm
SL}(n+1,\mathbb{H})$-invariance
in
Theorem \ref{thm:k-invariant} is deduced from  the  $ {\rm SL}(2n+2, \mathbb{C})$-invariance by using the embedding of  $\mathbb{H}^{n }$ to $\mathbb{C}^{2n\times
2}$.
Cocycles
$J_\mu$'s are used to construct the bundles $H$, $H^*$ and $E^*$ over locally   projective flat manifolds and  the $k$-Cauchy-Fueter complex exists over such
manifolds.
In Section 5,  we   introduce various notions of pluripotential theory on locally   projective flat manifolds.  In Section 6, we construct
a quaternionic projectively invariant operator from the quaternionic Monge-Amp\`{e}re operator, which can be used to find projectively invariant defining density
of a domain,  as
Fefferman \cite{Feff} did  in the complex case  and Sasaki \cite{Sasaki85} and Marugame  \cite{Marugame16} \cite{Marugame18} did     for     locally real
projective flat manifolds. This defining density
will be used to constructed various projectively invariants   as Fefferman constructed CR invariants  of  boundaries    and
CR invariant differential operators  on  boundaries  in the subsequent part. It is also interesting to consider the quaternionic version of the generalization of
Fefferman-type constructions to curved projective manifolds \cite{CGH}.
\section{The complexified version of  the $k$-Cauchy-Fueter complex}
 \subsection{  $ {\rm SL}( n+1,
\mathbb{H})$ and its complexification  }

 The Lie algebra of $G=  {\rm SL} (n + 1, \mathbb{H})$ is $\mathfrak{g}  =\mathfrak{sl} (n + 1, \mathbb{H})=\{A\in \mathfrak{gl} (n + 1, \mathbb{H}); {\rm Re}\,{\rm Tr} A=0\}$. Let $
\mathfrak{g}_{0}=\mathfrak{s }(\mathfrak{gl} (  1, \mathbb{H}) \oplus \mathfrak{gl} (n  , \mathbb{H}))=\mathfrak{sl} (  1, \mathbb{H}) \oplus \mathfrak{sl} (n  ,
\mathbb{H})\oplus \mathbb{R}$. $\mathfrak{g} = \mathfrak{g}_{-1}\oplus \mathfrak{g}_{0}\oplus\mathfrak{g}_{1}$ with the grading  easily visible in a block form with blocks
of sizes $1, n$:
\begin{equation*}
   \mathfrak{g}_{-1}=\left\{ \left(  \begin{array}{cc} 0& 0\\
*&0
   \end{array}\right)\right\},\qquad  \mathfrak{g}_{0}=\left\{ \left(  \begin{array}{cc} *& 0\\
0&*
   \end{array}\right)\right\},\qquad  \mathfrak{g}_{ 1}=\left\{ \left(  \begin{array}{cc} 0& *\\
0&0
   \end{array}\right)\right\}.
\end{equation*} Thus, $ \mathfrak{g}_{- 1}\cong\mathbb{H}^n\cong \mathfrak{g}_{  1}$.
Its Lie brackets are given by  $[X,Y]=XY-YX$ for $X, Y\in \mathfrak{g} $.

 Write an element $g$ of ${\rm G} ^{\mathbb{C}}= {\rm SL}(2n+2,
\mathbb{C})$ as $(g_\beta^\alpha)$ with $\alpha,\beta=0',1',0,1,\cdots,2n-1$.
We adopt the following index notations:
 $ A,B,C,  \cdots \in\{ 0,1,\cdots,2n-1\},$ $
  A',B',C', \cdots \in\{0',1' \},
 $. Then we can write
\begin{equation}\label{eq:matix}
  g =  \left(    g_{\beta}^{\alpha} \right)=  \left(  \begin{array}{cc} g_{B'}^{A'}& g^{A'}_B\\
g_{B'}^A&g_B^A
   \end{array}\right),
\end{equation}
where lower indices are column ones, while upper indices are row ones. It acts on vector $\left(
    \begin{array}{c}u^{A' } \\ u^{A}
       \end{array}
\right)\in \mathbb{C}^{2(n+1)}$. As a dual module, element $\left(
u_{A' }\quad
      u_{A}
\right)$ in $ \mathbb{C}^{2(n+1)*}$ is acted by matrix \eqref{eq:matix} from right, i.e. $g. v_\alpha=\left(
u_{A' }\quad
      u_{A}
\right)g^{-1} $.

Denote by $\mathfrak{g}_{-1}^{\mathbb{C}}$, $\mathfrak{g}_{0}^{\mathbb{C}}$ and $\mathfrak{g}_{ 1}^{\mathbb{C}}$   subalgebras of the following forms
\begin{equation*}
    \left(  \begin{array}{cc} 0& 0\\
g_{A'}^A&0
   \end{array}\right)
,\qquad
    \left(  \begin{array}{cc} g_{B'}^{A'}& 0\\
0&g_B^A
   \end{array}\right)
,\qquad
    \left(  \begin{array}{cc} 0 & g_B^{A'}\\
0&0
   \end{array}\right),
\end{equation*}and by $\mathfrak{e}_\beta^\alpha $ the matrix with all entries zero except for the entry in  $\alpha$-th column and $\beta$-th row to be one.
Then
\begin{equation}\label{eq:brackets0}
 \left [\mathfrak{e}_\beta^\alpha ,\mathfrak{e}_\kappa^\gamma\right]= \delta_\kappa^\alpha\mathfrak{e}_\beta^\gamma -\delta_\beta^\gamma\mathfrak{e}
 _\kappa^\alpha,
\end{equation}
in particular,
\begin{equation}\label{eq:brackets}\begin{split}\left[\mathfrak{e}_{A}^{A'},\mathfrak{e}^{B}_{B'}\right]&=\delta^{A'} _{B'}\mathfrak{e}_{A}^{B}-  \delta_{A}^{B}
\mathfrak{e}^{A'}_{B'} ,\\
\left[\mathfrak{e}_{A}^{A'},\mathfrak{e}^{B}_{C}\right]&=-\delta_{A}^{B} \mathfrak{e}^{A'}_{C} ,\qquad  \left[\mathfrak{e}^{B}_{C}, \mathfrak{e}_{A'}^{A} \right]
=-\delta^{A}_{C} \mathfrak{e}^{ B}_{A'} ,\\
\left[\mathfrak{e}_{A}^{A'},\mathfrak{e}^{B'}_{C'}\right]&= \delta_{C'}^{A'} \mathfrak{e}^{B'}_{A},\qquad   \left[\mathfrak{e}^{B'}_{C'}, \mathfrak{e}_{A'}^{A}
\right] = \delta_{A'}^{B'}\mathfrak{ e}^{A}_{C'}.
 \end{split} \end{equation}
 \begin{rem} The matrix $g$ in \eqref{eq:matix} can be written as $g^\beta_\alpha\mathfrak{e}_\beta^\alpha $.
  The     column  and row   indices of the tuple $(g^\beta_\alpha) $ and that of the basis $ \mathfrak{e}_\beta^\alpha $ are exchanged. We use the  upper indices of the tuple $(g^\beta_\alpha) $ as  row  indices as in differential geometry \cite{GS}.
 \end{rem}

The parabolic subalgebra is
\begin{equation*}
 \mathfrak{p}^{\mathbb{C}}:= \mathfrak{g}_{ 0}^{\mathbb{C}} \oplus \mathfrak{g}_{1}^{\mathbb{C}},
\end{equation*} and let $P^{\mathbb{C}}$ be corresponding subgroup. Then
\begin{equation*}
   {\rm G}_0^{\mathbb{C}}=S\left({\rm GL}( 2,
\mathbb{C})\times {\rm GL}(2n ,
\mathbb{C})\right)= \left({\rm GL}( 2,
\mathbb{C})\times {\rm GL}(2n ,
\mathbb{C})\right)\cap {\rm SL}(2n+2,
\mathbb{C}).
\end{equation*}$\mathbb{C}^{ 2n+2}=\mathbb{C}^{ 2}\oplus \mathbb{C}^{ 2n }$ as the defining representation of $ {\rm SL}(2n+2,
\mathbb{C})$ is a $P^{\mathbb{C}}$-module.
It is obvious that $\mathbb{C}^{ 2}$ in this decomposition is a $P^{\mathbb{C}}$-module, and so is $\mathbb{C}^{ 2n* }$ in the decomposition $\mathbb{C}^{ 2(n+1)*}=\mathbb{C}^{ 2* }\oplus \mathbb{C}^{  2n * }$. They are also $G_0^{\mathbb{C}}$-modules, and $\mathbb{C}^{ 2*}$ and  $\mathbb{C}^{ 2n * }$ are   $G_0^{\mathbb{C}}$-modules dual to  $\mathbb{C}^{ 2}$ and  $\mathbb{C}^{ 2n}$, respectively.
 Then as $G_0^{\mathbb{C}} $-modules,
\begin{equation*}
   \mathfrak{g}_{-1}^{\mathbb{C}}\cong \mathbb{C}^{ 2n  } \otimes \mathbb{C}^{ 2 *},\qquad    \mathfrak{g}_{ 1}^{\mathbb{C}}\cong \mathbb{C}^{ 2  } \otimes \mathbb{C}^{ 2n* }.
\end{equation*}
\subsection{  ${\rm G}/{\rm P}$ }

Let ${\rm G}$ be a real or complex semisimple Lie group and ${\rm P}$ is a parabolic subgroup. A  point of the homogeneous space ${\rm G}/{\rm P}$ is a coset
$h{\rm P}$ for some
$h\in {\rm G}$.
$g\in {\rm G}$ acts on ${\rm G}/{\rm P}$ as
\begin{equation}\label{eq:group-action}
   g.(h{\rm P})= gh{\rm P}.
\end{equation}
  Since for a function    on ${\rm G}/{\rm P}$, the action defined by
\begin{equation*}
   g.f (h{\rm P} )= f \left(g^{-1}h{\rm P} \right)
\end{equation*}
is a group action, i.e.
$
   g_2.( g_1.f)=  ( g_2g_1).f
$, we have to know the action of $g^{-1}$ on the homogeneous space.

In our case, ${\rm G}$ is $ {\rm SL}( n+1,
\mathbb{H})$ or its complexification ${\rm SL}(2n+2, \mathbb{C})$. For an element $g  \in {\rm SL}(2n+2, \mathbb{C})$,   write
\begin{equation}\label{eq:g-1}
   g^{-1}=\left(  \begin{array}{cc}\mathbf{a}_{2\times2}&\mathbf{b}_{2\times2n}\\
\mathbf{c}_{2n\times2}&\mathbf{d}_{2n\times2n}
   \end{array}\right)
\end{equation}
where $\mathbf{a},\mathbf{b},
\mathbf{c}$ and $\mathbf{d}$ are complex matrices. The parabolic subgroup ${\rm P}$ consisting matrices of the form
\begin{equation*}
   \left(  \begin{array}{cc}\mathbf{a}&\mathbf{b}\\
0&\mathbf{d}
   \end{array}\right),
\end{equation*}and
\begin{equation*}
   \left(  \begin{array}{cc} \mathbf{1}_{2  }&0\\
\mathbf{z}& \mathbf{1}_{2 n}
   \end{array}\right){\rm P},\qquad \mathbf{z}\in \mathbb{C}^{2n\times 2},
\end{equation*}
  constitute an open subset of ${\rm G}/{\rm P}$, which is holomorphically diffeomorphic to  $\mathbb{C}^{2n\times 2}$, where $\mathbf{1}_{l  }$ is the $l\times l$
  identity
  matrix.

\begin{prop} The action \eqref{eq:group-action} for $ {\rm SL}(2n+2, \mathbb{C})$ on $\mathbb{C}^{2n\times 2}$ is given by
   \begin{equation} \label{eq:g.z}
\underline{T}_{g^{-1}}:\mathbf{z}\rightarrow  g^{-1}.\mathbf{z} =(\mathbf{c}+\mathbf{d}\mathbf{z} ) (\mathbf{a}+\mathbf{b}\mathbf{z})^{-1},
\end{equation} for $ g^{-1}  $ in \eqref{eq:g-1}.
\end{prop}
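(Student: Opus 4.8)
The plan is to compute the left-translation action \eqref{eq:group-action} directly on the affine chart and then reduce modulo $\mathrm{P}$. Write the acting element as $g^{-1}=\left(\begin{smallmatrix}\mathbf a&\mathbf b\\ \mathbf c&\mathbf d\end{smallmatrix}\right)$ as in \eqref{eq:g-1}, and for $\mathbf z\in\mathbb C^{2n\times2}$ set $u(\mathbf z):=\left(\begin{smallmatrix}\mathbf 1_{2}&0\\ \mathbf z&\mathbf 1_{2n}\end{smallmatrix}\right)$, so that the point of the chart labelled by $\mathbf z$ is the coset $u(\mathbf z)\mathrm{P}$. By \eqref{eq:group-action}, $g^{-1}$ sends this coset to $g^{-1}u(\mathbf z)\mathrm P$; the task is to produce $\mathbf w\in\mathbb C^{2n\times2}$ with $g^{-1}u(\mathbf z)\mathrm P=u(\mathbf w)\mathrm P$, i.e.\ to factor $g^{-1}u(\mathbf z)=u(\mathbf w)\,p$ with $p\in\mathrm P$.

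First I would multiply out $g^{-1}u(\mathbf z)=\left(\begin{smallmatrix}\mathbf a+\mathbf b\mathbf z&\mathbf b\\ \mathbf c+\mathbf d\mathbf z&\mathbf d\end{smallmatrix}\right)$. Provided $\mathbf a+\mathbf b\mathbf z$ is invertible, I then invoke the block (Schur complement) factorization
\[
\begin{pmatrix}\mathbf a+\mathbf b\mathbf z&\mathbf b\\ \mathbf c+\mathbf d\mathbf z&\mathbf d\end{pmatrix}
=\begin{pmatrix}\mathbf 1_{2}&0\\ (\mathbf c+\mathbf d\mathbf z)(\mathbf a+\mathbf b\mathbf z)^{-1}&\mathbf 1_{2n}\end{pmatrix}
\begin{pmatrix}\mathbf a+\mathbf b\mathbf z&\mathbf b\\ 0&\mathbf d-(\mathbf c+\mathbf d\mathbf z)(\mathbf a+\mathbf b\mathbf z)^{-1}\mathbf b\end{pmatrix},
\]
which is verified by carrying out the product on the right. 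The right-hand factor is block upper triangular, hence lies in $\mathrm P$, and the left-hand factor is exactly $u(\mathbf w)$ with $\mathbf w=(\mathbf c+\mathbf d\mathbf z)(\mathbf a+\mathbf b\mathbf z)^{-1}$. Therefore $g^{-1}.\mathbf z=(\mathbf c+\mathbf d\mathbf z)(\mathbf a+\mathbf b\mathbf z)^{-1}$, which is \eqref{eq:g.z}. (Incidentally, the lower-right block of the $\mathrm P$-factor is the complex model of the cocycle $J_2$ appearing later.)

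It remains only to note well-definedness: $u$ parametrizes the open cell injectively because the above decomposition of a $(2n+2)\times(2n+2)$ matrix with invertible upper-left $2\times2$ block as (block-unipotent-lower)$\,\times\,$(block-upper-triangular) is unique, so $\mathbf w$ is read off from the coset unambiguously; the identity $g_2^{-1}.(g_1^{-1}.\mathbf z)=(g_2^{-1}g_1^{-1}).\mathbf z$ is then inherited from associativity of matrix multiplication. Since this is a short, essentially self-checking computation, I do not expect a real obstacle; the only point deserving care is that $\widetilde T_{g^{-1}}$ is defined only where $\mathbf a+\mathbf b\mathbf z$ is invertible — off that locus the image coset leaves the chosen affine cell — which is not a defect but precisely the source of the singularity set $\mathcal L_g$ in the quaternionic statements above.
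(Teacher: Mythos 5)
Your proof is correct and follows essentially the same route as the paper: compute $g^{-1}\left(\begin{smallmatrix}\mathbf 1_2&0\\ \mathbf z&\mathbf 1_{2n}\end{smallmatrix}\right)$ and factor it as a block-lower-unipotent matrix times an element of $\mathrm P$, reading off $(\mathbf c+\mathbf d\mathbf z)(\mathbf a+\mathbf b\mathbf z)^{-1}$ from the unipotent factor. The only cosmetic difference is that the paper splits your single block-upper-triangular $\mathrm P$-factor further into a block-diagonal part (recording $J_1$, $J_2$) times a block-upper-unipotent part, which it needs later for the cocycle property but which is immaterial here.
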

\begin{proof}
This is because
\begin{equation}\label{eq:triangular-decomposition}\begin{split}
 &\left(  \begin{array}{cc}\mathbf{a}&\mathbf{b}\\
\mathbf{c}&\mathbf{d}
   \end{array}\right)\left(  \begin{array}{cc}\mathbf{1}_2 &\mathbf{0}\\
 \mathbf{z} &\mathbf{1}_{2 n}
   \end{array}\right)\\=&\left(  \begin{array}{cc}\mathbf{a}+\mathbf{b}\mathbf{z} &\mathbf{b}\\
 \mathbf{c}+\mathbf{d}\mathbf{z}  &\mathbf{d}
   \end{array}\right)\left(  \begin{array}{cc}\mathbf{1}_{2  } &-(\mathbf{a}+\mathbf{b}\mathbf{z})^{-1} \mathbf{b}\\
0&\mathbf{1}_{2 n}
   \end{array}\right) \left(  \begin{array}{cc}\mathbf{1}_{2  } & (\mathbf{a}+\mathbf{b}\mathbf{z})^{-1}\mathbf{b} \\
0&\mathbf{1}_{2 n}
   \end{array}\right) \\
=  &\left(  \begin{array}{cc}\mathbf{a}+\mathbf{b}\mathbf{z}&\mathbf{0}\\
  \mathbf{c}+\mathbf{d}\mathbf{z}    &  \mathbf{d}-(\mathbf{c}+\mathbf{d}\mathbf{z} ) (\mathbf{a}+\mathbf{b}\mathbf{z})^{-1}\mathbf{b}
   \end{array}\right)\left(  \begin{array}{cc}\mathbf{1}_2   & (\mathbf{a}+\mathbf{b}\mathbf{z})^{-1}\mathbf{b} \\
0&\mathbf{1}_{2 n}
   \end{array}\right) \\
=  &\left(  \begin{array}{cc}\mathbf{1}_{2  }&\mathbf{0}\\
 (\mathbf{c}+\mathbf{d}\mathbf{z} ) (\mathbf{a}+\mathbf{b}\mathbf{z})^{-1}& \mathbf{1}_{2 n}
   \end{array}\right)\left(  \begin{array}{cc} J_1(g ^{-1} ,\mathbf{z} ) &\mathbf{0}\\
0  &  {J}_2(g ^{-1} ,\mathbf{z} )
   \end{array}\right)\left(  \begin{array}{cc}\mathbf{1}_{2  } & J_1(g ^{-1} ,\mathbf{z} )^{-1}\mathbf{b} \\
0&\mathbf{1}_{2 n}
   \end{array}\right),
\end{split} \end{equation}
 if we denote
\begin{equation*}\begin{split}
  J_1(g^{-1} ,\mathbf{z} )&:=  \mathbf{a}+\mathbf{b}\mathbf{z}  ,\\
  {J}_2(g^{-1} ,\mathbf{z} )&:= \mathbf{d}-(\mathbf{c}+\mathbf{d}\mathbf{z} ) (\mathbf{a}+\mathbf{b}\mathbf{z})^{-1} \mathbf{b}.
 \end{split} \end{equation*}Then \eqref{eq:triangular-decomposition} mod P gives us the result.
\end{proof}
Since the action \eqref{eq:g.z} is induced from \eqref{eq:group-action}, it is  a group action, i.e.
\begin{equation}\label{eq:g.z2}
 g_2^{-1}.(  g_1^{-1}.\mathbf{z} )=(g_2^{-1}   g_1^{-1}).\mathbf{z}.
\end{equation}

\begin{prop}\label{prop:cocycle}
   $J_\mu(g,\mathbf{z} ) $ is a cocycle,   i.e.
   \begin{equation}\label{eq:cocycle}
    J_\mu(g_2^{-1} g_1^{-1}  ,\mathbf{z} ) = J_\mu\left(g_2^{-1} ,g_1^{-1} .\mathbf{z}\right )J_\mu(g_1^{-1} ,\mathbf{z}),\qquad \mu=1,2.
\end{equation}
\end{prop}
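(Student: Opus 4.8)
The plan is to read the cocycle identity \eqref{eq:cocycle} off the multiplicativity of the block factorization \eqref{eq:triangular-decomposition}, combined with the group law \eqref{eq:g.z2}. Abbreviate $L(\mathbf{w}):=\left(\begin{array}{cc}\mathbf{1}_2 & \mathbf{0}\\ \mathbf{w} & \mathbf{1}_{2n}\end{array}\right)$ for $\mathbf{w}\in\mathbb{C}^{2n\times 2}$. Since $(\mathbf{c}+\mathbf{d}\mathbf{z})(\mathbf{a}+\mathbf{b}\mathbf{z})^{-1}=g^{-1}.\mathbf{z}$ by \eqref{eq:g.z}, the identity \eqref{eq:triangular-decomposition} says exactly that
\begin{equation*}
   g^{-1}L(\mathbf{z})=L(g^{-1}.\mathbf{z})\, p(g^{-1},\mathbf{z}),
\end{equation*}
where $p(g^{-1},\mathbf{z})\in{\rm P}$ is the product of the block-diagonal matrix with diagonal blocks $J_1(g^{-1},\mathbf{z}),J_2(g^{-1},\mathbf{z})$ and the block upper-triangular unipotent matrix appearing as the last factor on the right of \eqref{eq:triangular-decomposition}. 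In particular, the image of $p(g^{-1},\mathbf{z})$ under the homomorphism $\rho\colon{\rm P}\to{\rm GL}(2,\mathbb{C})\times{\rm GL}(2n,\mathbb{C})$ sending a block upper-triangular matrix to its pair of diagonal blocks (the Levi projection, whose kernel is the unipotent radical) is the pair $(J_1(g^{-1},\mathbf{z}),J_2(g^{-1},\mathbf{z}))$.

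First I would record the uniqueness of this factorization: the matrices $L(\mathbf{w})$ form the unipotent radical of the parabolic opposite to ${\rm P}$, so the multiplication map $(\mathbf{w},p)\mapsto L(\mathbf{w})p$ from $\mathbb{C}^{2n\times 2}\times{\rm P}$ into ${\rm SL}(2n+2,\mathbb{C})$ is injective; concretely, $L(\mathbf{w})p=L(\mathbf{w}')p'$ forces $\mathbf{w}=\mathbf{w}'$ by comparing the lower-left blocks, and then $p=p'$. Next, for $g_1,g_2\in{\rm SL}(2n+2,\mathbb{C})$ generic enough that all the inverses below exist, I would apply \eqref{eq:triangular-decomposition} twice and use \eqref{eq:g.z2} in the form $g_2^{-1}.(g_1^{-1}.\mathbf{z})=(g_2^{-1}g_1^{-1}).\mathbf{z}$:
\begin{equation*}
   (g_2^{-1}g_1^{-1})L(\mathbf{z})=g_2^{-1}\bigl(L(g_1^{-1}.\mathbf{z})\,p(g_1^{-1},\mathbf{z})\bigr)=L\bigl((g_2^{-1}g_1^{-1}).\mathbf{z}\bigr)\,p(g_2^{-1},g_1^{-1}.\mathbf{z})\,p(g_1^{-1},\mathbf{z}).
\end{equation*}
Comparing this with the factorization of $(g_2^{-1}g_1^{-1})L(\mathbf{z})$ obtained by applying \eqref{eq:triangular-decomposition} directly to $g_2^{-1}g_1^{-1}$, and using the uniqueness just noted, one gets $p(g_2^{-1}g_1^{-1},\mathbf{z})=p(g_2^{-1},g_1^{-1}.\mathbf{z})\,p(g_1^{-1},\mathbf{z})$ in ${\rm P}$. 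Applying the homomorphism $\rho$ to both sides and reading off the two diagonal blocks then yields \eqref{eq:cocycle} for $\mu=1$ and $\mu=2$ simultaneously.

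Two points deserve a word of care. The identity just derived holds a priori only on the open dense set where the inverses occurring above are defined; since both sides of \eqref{eq:cocycle} are rational in the entries of $g_1$, $g_2$ and $\mathbf{z}$, it extends to wherever both sides make sense, which is all that is needed. Alternatively, one can bypass the group-theoretic packaging and verify \eqref{eq:cocycle} directly: writing the blocks of $g_i^{-1}$ as $\mathbf{a}_i,\mathbf{b}_i,\mathbf{c}_i,\mathbf{d}_i$ as in \eqref{eq:g-1}, the case $\mu=1$ is immediate, since $J_1(g_2^{-1}g_1^{-1},\mathbf{z})=\mathbf{a}_2(\mathbf{a}_1+\mathbf{b}_1\mathbf{z})+\mathbf{b}_2(\mathbf{c}_1+\mathbf{d}_1\mathbf{z})=\bigl(\mathbf{a}_2+\mathbf{b}_2(g_1^{-1}.\mathbf{z})\bigr)(\mathbf{a}_1+\mathbf{b}_1\mathbf{z})$ by \eqref{eq:g.z}, and the case $\mu=2$ follows from the same block multiplication, simplified with the $\mu=1$ relation, after a few lines. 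I expect the only mildly laborious step to be this last $\mu=2$ bookkeeping (or, in the conceptual argument, spelling out the uniqueness of the $L(\mathbf{w}){\rm P}$-factorization); there is no serious obstacle.
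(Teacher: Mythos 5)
Your proposal is correct and follows essentially the same route as the paper: apply the factorization \eqref{eq:triangular-decomposition} twice to $g_2^{-1}g_1^{-1}L(\mathbf{z})$, use the group law \eqref{eq:g.z2}, and compare with the factorization of $(g_2^{-1}g_1^{-1})L(\mathbf{z})$ to read off multiplicativity of the diagonal blocks. The only difference is presentational — the paper multiplies the matrices out explicitly and absorbs the unipotent factors into $*$'s, whereas you invoke the uniqueness of the $L(\mathbf{w})\,{\rm P}$ big-cell factorization and the Levi projection; both are fine.
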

\begin{proof} Denote $g_\alpha^{-1}=\left(  \begin{array}{cc}\mathbf{a}_\alpha &\mathbf{b}_\alpha\\
 \mathbf{c}_\alpha&\mathbf{d}_\alpha
   \end{array}\right)\in {\rm SL}(2n+2, \mathbb{C})$, $\alpha=1,2$. Then, by using \eqref{eq:triangular-decomposition} twice, we get
\begin{equation*}\begin{split}&g_2^{-1} g_1  ^{-1} \left(  \begin{array}{cc}\mathbf{1}_{2 } &0\\
\mathbf{z} &\mathbf{1}_{2 n}
   \end{array}\right)\\
   = & \left(  \begin{array}{cc}\mathbf{a}_2 &\mathbf{b}_2\\
 \mathbf{c}_2&\mathbf{d}_2
   \end{array}\right)\left(  \begin{array}{cc} \mathbf{1}_{2 } &0 \\
 g_1^{-1}.\mathbf{z} & \mathbf{1}_{2 n}
   \end{array}\right)  \left(  \begin{array}{cc}  J_1(g_1^{-1} ,\mathbf{z}) &0\\
 0& J_2(g_1^{-1} ,\mathbf{z})
   \end{array}\right)\left(  \begin{array}{cc}\mathbf{1}_{2 } &  J(g_1^{-1} ,\mathbf{z})^{-1} \mathbf{b}\\
0&\mathbf{1}_{2 n}
   \end{array}\right)\\
=& \left(  \begin{array}{cc}\mathbf{1}_{2 } &0 \\
 g_2^{-1}.(g_1^{-1}.\mathbf{z}) & \mathbf{1}_{2 n}
   \end{array}\right) \left(  \begin{array}{cc}  J_1(g_2^{-1} ,g_1^{-1}.\mathbf{z} ) &0\\
 0&{J}_2(g_2^{-1} ,g_1^{-1}.\mathbf{z} )
   \end{array}\right)\left(  \begin{array}{cc}\mathbf{1}_{2  } & * \\
0&\mathbf{1}_{2 n}
   \end{array}\right)\\
   &\qquad \cdot \left(  \begin{array}{cc}  J_1(g_1^{-1} ,\mathbf{z}) &0\\
 0&{J}_2(g_1^{-1} ,\mathbf{z})
   \end{array}\right)\left(  \begin{array}{cc}\mathbf{1}_{2 } & * \\
0&\mathbf{1}_{2 n}
   \end{array}\right)\\
= &\left(  \begin{array}{cc} \mathbf{1}_{2  } &0 \\
 g_2^{-1}.(g_1^{-1}.\mathbf{z}) & \mathbf{1}_{2 n}
   \end{array}\right) \left(  \begin{array}{cc}  J_1(g_2^{-1} ,g_1^{-1}.\mathbf{z} )   J_1(g_1^{-1} ,\mathbf{z})&0\\
 0&{J}_2(g_2^{-1} ,g_1^{-1}.\mathbf{z} ){J}_2(g_1^{-1} ,\mathbf{z})
   \end{array}\right) \left(  \begin{array}{cc}\mathbf{1}_{2  } & * \\
0&\mathbf{1}_{2 n}
   \end{array}\right)
\end{split} \end{equation*}
This together with the decomposition \eqref{eq:triangular-decomposition} for $(g_1  g_2)^{-1}$  and \eqref{eq:g.z2} implies
the  cocycle condition \eqref{eq:cocycle}.
  \end{proof}

  \eqref{eq:cocycle} means that
$J_\mu$ is a {\it factor of automorphy}.
 The defining representation $  \mathbb{ C}^2$ of ${\rm GL}(2,\mathbb{ C})$ is given by\begin{equation*}
   \mathbf{a}. s_{A'}= \mathbf{a}_{A'}^{B'}s_{B'}
\end{equation*}
for $\mathbf{a}=(\mathbf{a}_{B'}^{A'})\in  \mathfrak {\rm GL}(2,\mathbb{ C})$,  since
\begin{equation*}
   \mathbf{a}. (  \tilde{ \mathbf{a}}  . s_{A'})=\mathbf{a}. (   \tilde{ \mathbf{a}}_{A'}^{B'}s_{B'})=\mathbf{a}_{B'}^{C'} \tilde{
   \mathbf{a}}_{A'}^{B'}s_{C'}=(\mathbf{a}   \tilde{ \mathbf{a}}). s_{A'}
\end{equation*}
 for the other $  \tilde{ \mathbf{a}}  \in   {\rm GL}(2,\mathbb{ C})$. Let $\{\omega^A; {A} =0,\ldots, 2n-1\}$ be a basis of $  \mathbb{ C}^{2n*}$. They are Grassmannian variables, i.e.
$\omega^A\omega^B=-\omega^B\omega^A$.     $\mathbf{d} \in{\rm GL}(2n,\mathbb{ C})$ acts on $  \mathbb{ C}^{2n*}$ as
 \begin{equation*}
  \mathbf{d} . \omega^{A }=  \mathbf{d} _{B}^{ A } \omega^{B }.
\end{equation*}
 This action is not a representation, but $\varrho( \mathbf{d} ) \omega^{A } = \mathbf{d}^{-1} . \omega^{A }$  defines
  the dual representation   of defining representation $  \mathbb{ C}^{2n}$  of ${\rm GL}(2n,\mathbb{ C})$, since
\begin{equation*}
 \varrho(  \mathbf{d} )  \varrho( \tilde{\mathbf{d} }) \omega^{A } =\mathbf{d}^{-1}. (  ( \tilde{\mathbf{d} }^{-1})_{B}^{ A } \omega^{B })= (
 \tilde{\mathbf{d}}^{-1})_{B}^{ A } ( {\mathbf{d} }^{-1})_{C}^{ B } \omega^{C } =(  \mathbf{d}  \tilde{\mathbf{d} })^{-1} . \omega^{A }= \varrho(  \mathbf{d}
 \tilde{\mathbf{d} }) \omega^{A }
\end{equation*}
for $\mathbf{d},\tilde{\mathbf{d} }\in {\rm GL}(2n,\mathbb{ C})$.
 For $j\geq k $, we will denote by  $s^{A'}$   coordinate functions of $  \mathbb{ C}^{2*}$ with the action
\begin{equation*}
   \mathbf{a}. s^{A'}=  \mathbf{a} ^{A'}_{B'}s^{B'}.
\end{equation*}Similarly,  $\varrho(  \mathbf{a} ) \omega^{A } =  \mathbf{a}^{-1} .  s^{A'}$  defines  the dual representation of $  \mathbb{ C}^2$.

For a vector space $V$, let $\Gamma(\mathbb{C}^{2n\times 2}, V)$ be the space of $V$-valued holomorphic functions. Let
\begin{equation}\label{eq:partial-A-A'}
   \partial_{A}^{A' }:=\frac {\partial}{\partial \mathbf{z}_{A' }^{A}}.
\end{equation}Denote
$\omega^{\mathbf{A}}=\omega^{ {A}_1 } \cdots \omega^{ {A}_\tau }$ for a $\tau$-tuple
     $ {\mathbf{A}}= {A}_1 \cdots  {A}_\tau $  for some $\tau$. An element of $\Gamma( \mathbb{C}^{2n\times 2}, \wedge^{\tau}\mathbb{C}^{2n})$ can be written as $f= f_{\mathbf{A} } \omega^{\mathbf{A}}$ with $ f_{\mathbf{A} }$ antisymmetric under permutation of indices.
  Define $\underline d^{A'}  :\Gamma(\mathbb{C}^{2n\times 2}, \wedge^{\tau}\mathbb{C}^{2n})\rightarrow
\Gamma(\mathbb{C}^{2n\times 2},
\wedge^{\tau+1}\mathbb{C}^{2n})$   as
\begin{equation}\label{eq:d}\begin{aligned}&
\underline d^{A'}f:=
\partial_{A}^{A' }f_{\mathbf{A} }~  \omega^A \omega^{\mathbf{A}},
\end{aligned}\end{equation}

  Corresponding  to a notation   on $\mathbb{H}^{ n }$, its counterpart on $\mathbb{C}^{2n\times 2}$  is usually denoted by the same symbol with  underline.

\begin{prop}\label{prop:dd} {\rm  \cite[Proposition 2.2]{wan-wang}} $($1$)$ $ \underline{ d}^{0'}   d^{1'}=-\underline d^{1'}\underline d^{0'}$.\\
$($2$)$ $(\underline d^{0'})^2= (\underline  d^{1'}) ^2=0$.\\
$($3$)$ For $F\in \Gamma(\mathbb{C}^{2n\times 2}, \wedge^{\tau}\mathbb{C}^{2n})$, $G\in \Gamma(\mathbb{C}^{2n\times 2}, \wedge^{\chi}\mathbb{C}^{2n})$, we
have\begin{equation*}\underline d^{A'}(F\cdot G)=\underline d^{A'}
F\cdot
G+(-1)^{\tau}F\cdot \underline d^{A'} G,\qquad  {A'}=0' ,1' .\end{equation*}
\end{prop}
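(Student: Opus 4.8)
The plan is to prove all three assertions by direct computation in the coordinates $\mathbf{z}_{AA'}$, the only real bookkeeping being the sign rule $\omega^A\omega^B=-\omega^B\omega^A$ for the Grassmann generators. Parts (1) and (2) will come out of the symmetry of mixed partial derivatives of holomorphic functions, and part (3) out of the ordinary Leibniz rule; in fact (1) and (2) are best treated simultaneously as the single statement $\widetilde d_{A'}\widetilde d_{B'}=-\widetilde d_{B'}\widetilde d_{A'}$ for all $A',B'\in\{0',1'\}$.

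First I would establish (3). Writing $F=f_{\mathbf{A}}\omega^{\mathbf{A}}$ with $\mathbf{A}$ a $\tau$-tuple and $G=g_{\mathbf{B}}\omega^{\mathbf{B}}$ with $\mathbf{B}$ a $\varsigma$-tuple, we have $F\wedge G=f_{\mathbf{A}}g_{\mathbf{B}}\,\omega^{\mathbf{A}}\omega^{\mathbf{B}}$, so by the definition \eqref{eq:d},
\[
\widetilde d_{A'}(F\wedge G)=\partial_{CA'}\!\big(f_{\mathbf{A}}g_{\mathbf{B}}\big)\,\omega^{C}\omega^{\mathbf{A}}\omega^{\mathbf{B}}
=(\partial_{CA'}f_{\mathbf{A}})\,g_{\mathbf{B}}\,\omega^{C}\omega^{\mathbf{A}}\omega^{\mathbf{B}}+f_{\mathbf{A}}(\partial_{CA'}g_{\mathbf{B}})\,\omega^{C}\omega^{\mathbf{A}}\omega^{\mathbf{B}}.
\]
The first summand is exactly $(\widetilde d_{A'}F)\wedge G$. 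In the second summand I move the single generator $\omega^{C}$ past the $\tau$ generators making up $\omega^{\mathbf{A}}$, which contributes the sign $(-1)^{\tau}$, so it equals $(-1)^{\tau}f_{\mathbf{A}}(\partial_{CA'}g_{\mathbf{B}})\,\omega^{\mathbf{A}}\omega^{C}\omega^{\mathbf{B}}=(-1)^{\tau}F\wedge(\widetilde d_{A'}G)$, which is (3).

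For (1) and (2), I would compute the twice-iterated operator once. For $F=f_{\mathbf{A}}\omega^{\mathbf{A}}$ and $A',B'\in\{0',1'\}$,
\[
\widetilde d_{A'}\widetilde d_{B'}F=\widetilde d_{A'}\big(\partial_{CB'}f_{\mathbf{A}}\,\omega^{C}\omega^{\mathbf{A}}\big)=\partial_{DA'}\partial_{CB'}f_{\mathbf{A}}\,\omega^{D}\omega^{C}\omega^{\mathbf{A}}.
\]
Since the $f_{\mathbf{A}}$ are holomorphic, $\partial_{DA'}\partial_{CB'}f_{\mathbf{A}}$ is invariant under the exchange $(D,A')\leftrightarrow(C,B')$; relabeling the summed indices $C\leftrightarrow D$ and using $\omega^{D}\omega^{C}=-\omega^{C}\omega^{D}$ then gives $\widetilde d_{A'}\widetilde d_{B'}F=-\widetilde d_{B'}\widetilde d_{A'}F$. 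Taking $A'=B'$ yields $\widetilde d_{A'}^2F=0$, i.e.\ (2); taking $\{A',B'\}=\{0',1'\}$ yields (1). The computations are routine; the sole delicate point is the sign tracking when commuting $\omega^{C}$ through the $\tau$-fold product $\omega^{\mathbf{A}}$, and this is disposed of uniformly by the anticommutation rule, so I do not expect any genuine obstacle.
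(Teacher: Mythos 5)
Your proof is correct and follows essentially the same route as the paper's: the anticommutation relations (1)--(2) from the symmetry of mixed partial derivatives combined with $\omega^D\omega^C=-\omega^C\omega^D$, and the Leibniz rule (3) from the ordinary product rule plus the sign $(-1)^\tau$ from commuting the new Grassmann generator past $\omega^{\mathbf{A}}$. No issues.
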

\begin{proof} We give its simple proof here for convenience of readers.  For $F=F_{\mathbf{A} }~ \omega^{\mathbf{A}}$ with $|\mathbf{A}|=\tau$,
\begin{equation*}
  \underline d^{A'}\underline d^{B'}F= \partial_{A}^{A' } \partial_{B}^{B' }F_{\mathbf{A} }~ \omega^A\omega^B\omega^{\mathbf{A}}=-\partial_{B}^{B' }
  \partial_{A}^{A'
  }F_{\mathbf{A} }~
  \omega^B\omega^A\omega^{\mathbf{A}}=-\underline d^{B'}\underline d^{A'}F,
\end{equation*}
and for $G=G_{\mathbf{A} }~ \omega^{\mathbf{A}}$ with $|\mathbf{G}|=\chi$, we have
\begin{equation*}
  \underline d^{A'}(F \wedge  G)= \partial_{A}^{A' }  (F_{\mathbf{A} }G_{\mathbf{B}})~ \omega^A\omega^{\mathbf{A}}\omega^{\mathbf{B}}=\partial_{A}^{A' }
  F_{\mathbf{A}
  }~
  \omega^A\omega^{\mathbf{A}}(G_{\mathbf{B}} \omega^{\mathbf{B}})+  (-1)^{\tau} F_{\mathbf{A} }~\omega^{\mathbf{A}} (\partial_{A}^{A' }G_{\mathbf{B}} \omega^A
  \omega^{\mathbf{B}}).
\end{equation*}The proposition is proved.
\end{proof}

 We will also use the following notations:
  \begin{equation}\label{eq:a-d}
     \mathbf{a}.\underline  d^{A'}=\mathbf{a}_{ B'}^{A'} \underline d^{ B'} ,\qquad \mathbf{d} . \underline  d^{A'}=  \mathbf{d} .\omega^A \partial_{A}^{A' }
  \end{equation}
  for $\mathbf{a}\in \mathfrak{ gl}( 2,\mathbb{C})$, $\mathbf{d}\in \mathfrak{ gl}( 2n,\mathbb{C})$.

The $\sigma$-th symmetric power $\odot^\sigma  \mathbb{C}^{2 }$ is an irreducible ${\rm GL}(2,\mathbb{ C})$-module.
It is convenient to   realize $\odot^{\sigma} \mathbb{C}^{2 }  $ as the space $  \mathcal{P}_\sigma(\mathbb{C}^2)$ of homogeneous polynomials of degree $\sigma$ on
$\mathbb{C}^2$
\cite{LSW}. Denote
       $s_{\mathbf{A}'}:=s_{A_1'}\cdot\ldots\cdot s_{A_\sigma'}$ for $\mathbf{A}' = A_1' \ldots A_\sigma'$. Set $|A_1' \ldots A_\sigma'|=\sigma$. $ \mathbf{a}  .
       s_{\mathbf{A}'}=   \mathbf{a}. s_{A_1'}\cdots   \mathbf{a}. s_{A_\sigma'}  $ for $\mathbf{a} \in  {\rm GL}(2,\mathbb{C})$.
The action of the Lie algebra $\mathfrak {gl}(2,\mathbb{ C})$ is given by
\begin{equation}\label{eq:act-a-s}
  \mathbf{a}  . s_{\mathbf{A}'}=\sum_{j=1}^\sigma  s_{A_1'}\cdots(\mathbf{a}. s_{A_j'})\cdots s_{A_\sigma'}   =\mathbf{a} . s_{ {A}'}\cdot \partial^{
  {A}'}s_{\mathbf{A}'}, \qquad \qquad \partial^{ {A}'}=\frac {\partial}{\partial s_{ {A}'}},
\end{equation}
for $\mathbf{a} \in  \mathfrak {gl}(2,\mathbb{ C})$.
  The  $\tau$-th exterior power $\wedge^{\tau} \mathbb{C}^{2n *} $  is a representation of $\mathfrak {gl}(2n,\mathbb{ C})$ with induced action
  \begin{equation}\label{eq:act-d-omega}
      \mathbf{d}. \omega^{ \mathbf{A}  } =  \sum_{j=1}^\tau \cdots \omega^{A_{j-1} } (\mathbf{d}. \omega^{A_j'}) \omega^{A_{j+1} }\cdots=   \mathbf{d}. \omega^{
      {A}
      }\cdot \partial_A \omega^{ \mathbf{A}  },\qquad \qquad \partial_A=\frac {\partial}{\partial  \omega^{ {A} }}.
  \end{equation}
\begin{rem}
   It is convenient to use derivatives and multiplications with respect to variables $s_{A'}\in\mathbb{C}^{2}$ or Grassmannian  variables
$\omega^{  A}$ to represent linear transformations on the space $\odot^{\sigma}\mathbb{C}^{2}
$ or $  \wedge^\tau\mathbb{C}^{2n*}  $.
\end{rem}

    The   complexified version of   the $k$-Cauchy-Fueter complex is
\begin{equation*}
0\rightarrow \Gamma( \mathbb{C}^{2n\times 2}, \mathcal  {V}_0 )
 \xrightarrow{\underline { \mathcal  {D}}_{0} } \Gamma( \mathbb{C}^{2n\times 2}, \mathcal  {V}_1   )\xrightarrow{\underline {\mathcal  {D}}_{1} }  \cdots
 \xrightarrow{\underline {\mathcal
 {D}}_{2n-2}} \Gamma(
 \mathbb{C}^{2n\times 2},\mathcal  {V}_{2n-1} )\rightarrow0.
 \end{equation*}
A section of $f\in
\Gamma \left( \mathbb{C}^{2n\times 2}, \odot^{\sigma}\mathbb{C}^{2}
\otimes  \wedge^\tau\mathbb{C}^{2n*} \right)$ is a function in complex variables $\mathbf{z}^{A}_{A'}$, $s_{A'}\in\mathbb{C}^{2}$ and Grassmannian  variables
$\omega^{  A}$, which is homogeneous of degree $\sigma$ in $s_{A'}$ and  homogeneous of degree $\tau$ in $\omega^{  A}$.  It is a function  in supervariables as
\begin{equation*}\label{eq:f-supervariables>}
  f(\mathbf{z})= f_{\mathbf{A}}^{ \mathbf{A}'}(\mathbf{z})s_{\mathbf{A}'} \omega^{\mathbf{A} },
\end{equation*}
where $f_{\mathbf{A}}^{ \mathbf{A}'}$ is invariant under permutations of indices $\mathbf{A}'=A_1'\cdots A'_\sigma$ and is  antisymmetric under permutation of indices $\mathbf{A}=A_1 \cdots A _\tau $.
  Define the action for $g\in {\rm SL}(2n+2, \mathbb{C})$ given by \eqref{eq:g-1} as\begin{equation}\label{eq:action-U-j<}\begin{split}
       [\underline  \pi_j(g)f] (\mathbf{z}):=&\frac {f_{\mathbf{A}}^{\mathbf{A}'}(g^{-1}.\mathbf{z})  }{\det (\mathbf{a}+\mathbf{b}\mathbf{z})^{j+1}}
       (\mathbf{a}+\mathbf{b}\mathbf{z})^{-1}   . s_{ \mathbf{{A}}'}\cdot \left[\mathbf{d}    - (\mathbf{c}+\mathbf{d}\mathbf{z} )
       (\mathbf{a}+\mathbf{b}\mathbf{z})^{-1}
       \mathbf{b}
       \right]  . \omega^{ \mathbf{{A}} }
  ,
   \end{split} \end{equation}

\begin{rem}
  If $g$ given by \eqref{eq:g-1} belongs to $G_0^{\mathbb{C}}$, i.e. $g  =\left(  \begin{array}{cc} \mathbf{a}^{-1} &0\\
0 & \mathbf{d} ^{-1}
   \end{array}\right)$, then \eqref{eq:action-U-j<} becomes
   \begin{equation*}
      [\underline  \pi_j(g)f] (\mathbf{z}):= \frac { f_{\mathbf{A}}^{\mathbf{A}'}(\mathbf{d}  \mathbf{z}\mathbf{a}^{-1})  }{ \det (\mathbf{a} )^{j+1} }
       \mathbf{a} ^{-1}   . s_{ \mathbf{{A}}'}\cdot   \mathbf{d}
       . \omega^{ \mathbf{{A}} } .
   \end{equation*}Namely,  $g$ acts on $s_{ \mathbf{{A}}'}$ by the defining representation of $ {\rm GL}( 2,
\mathbb{C})$ and trivially by $ {\rm GL}( 2n,
\mathbb{C})$, meanwhile, it acts on $\omega^{ \mathbf{{A}} } $ by the representation dual to the defining representation  of $ {\rm GL}( 2n,
\mathbb{C})$ and trivially by $ {\rm GL}( 2 ,
\mathbb{C})$. Thus  $\mathcal{V}_j$  as $G_0^{\mathbb{C}}$-module is $ \odot^{k-j} \mathbb{C}^{ 2} \otimes\wedge^j  \mathbb{C}^{ 2n * }  $. While for the case $j\geq k$ in \eqref{eq:reps-j>}, $g$ acts on $s^{ \mathbf{{A}}'}$ by the representation dual to the defining representation of $ {\rm GL}( 2,
\mathbb{C})$ and trivially by $ {\rm GL}( 2n,
\mathbb{C})$.
\end{rem}

   If $j< k
$,
let $\underline  {\mathcal  {D}}_j:= \underline {\mathcal  {D}} $, where
     \begin{equation*}
         \underline {  {\mathcal  {D}}} : =  \partial^{[ A'} \circ\underline  d^{B']}=\frac 12  \partial^{  A'} \circ\underline  d^{B' } -\frac 12   \partial^{ B'} \circ\underline  d^{ A' },
    \end{equation*}is a derivative of second order on functions in variables $\mathbf{z} _{A}^{A'}$, $s^{A'}$ and
$\omega^{  A}$. The operator is zero if $A'= B'$, and the nontrivial one is unique up to a sign for $[A'B']=[0'1']$ or $ [1'0']$.  Here and in the sequel, the antisymmetrisation is explained by the formula above, while $\circ$ is the composition of operators acting on such functions, and is usually omitted.

   If $j= k
$, let
\begin{equation*}
   \underline  {\mathcal  {D}}_k:=\underline  d^{[ A'} \circ \underline  d^{B']} .
\end{equation*}

    If $j \geq k $,
 $ f\in \Gamma(  \mathbb{C}^{2n\times 2},  \odot^{\sigma}\mathbb{C}^{2*}
\otimes  \wedge^\tau\mathbb{C}^{2n*} )  $ ($\sigma=j-k-1,\tau=j+1$) can be written as
\begin{equation*}
  f= f_{\mathbf{A}'\mathbf{A}} s^{\mathbf{A}'} \omega^{\mathbf{A} }
\end{equation*}
with $|\mathbf{A}'|=\sigma$, $| \mathbf{A} |=\tau$. Define
\begin{equation}\label{eq:reps-j>}\begin{split}
       [\underline  \pi_j(g)f] ({\mathbf{z} }):=&\frac {f_{\mathbf{A}'\mathbf{A}} (g^{-1}.\mathbf{z}) }{\det (\mathbf{a}+\mathbf{b}\mathbf{z})^{j+1}}
       \left(\mathbf{a}+\mathbf{b}\mathbf{z} \right)    . s^{
       \mathbf{{A}}'}\cdot \left[\mathbf{d}
       - (\mathbf{c}+\mathbf{d}\mathbf{z} ) (\mathbf{a}+\mathbf{b}\mathbf{z})^{-1} \mathbf{b} \right]  . \omega^{\mathbf{ {A} }}  .
   \end{split} \end{equation}Let $ \underline  { \mathcal  {D}}_j:= \widehat{ \underline{ {\mathcal  D} }}$ with
\begin{equation}\label{eq:widetilde-D}
 \widehat  { \underline{\mathcal  D} }:=s^{ [A'}\circ \underline d^{B']} .
\end{equation} For   $\mathbf{a}\in {\rm GL}( 2,\mathbb{C}) $,
 denote\begin{equation*}
   \mathbf{a}.    \partial^{ A'}:= \mathbf{a}_{ B'}^{A'} \partial^{ B'} , \qquad    \mathbf{a}.    \partial_{ A'}:= \mathbf{a}^{ B'}_{A'} \partial_{ B'}.
\end{equation*}
By the following lemma, $ \partial^{ [A'} \underline d^{B']} $ and $ s^{ [A'} \underline d^{B']} $  are
both invariant under the action of ${\rm SL}(2,\mathbb{C})$,
but   not invariant under the action of ${\rm GL}(2,\mathbb{C})$.

\begin{lem} \label{lem:M-varepsilon} For   $\mathbf{a}\in \mathfrak{ gl}( 2,\mathbb{C}) $, we have
\begin{equation}\label{eq:M-varepsilon}\begin{split}
 \mathbf{a}.  \partial^{ [A'} \circ  \underline  d^{B']} + \partial^{ [A'} \circ\mathbf{a}.\underline d^{ B']}&=\operatorname{tr}\mathbf{a}\,\partial^{ [A'}
 \underline
 d^{ B']},\\\mathbf{a} . s ^{ [ A'} \circ \underline d^{B']}+
 s^{ [A'} \circ\mathbf{a} .\underline d^{ B']} &= \operatorname{tr}  \mathbf{a}  s^{ [A'}  \underline d^{ B']} .
\end{split} \end{equation}
\end{lem}
\begin{proof} Let $\left (\varepsilon^{A'B'}\right) =\left( \begin{array}{cc} 0&
1\\- 1& 0\end{array}\right)$. Then, we have
\begin{equation*}\begin{split}  \mathbf{a}.  \partial^{ [A'} \circ  \underline  d^{B']} + \partial^{ [A'} \circ\mathbf{a}.\underline d^{ B']}& =\mathbf{a}_{ C'}^{A'}\partial^{ [C'} \circ   \underline{d}^{B'] }+\mathbf{a}_{ C'}^{B'}\partial^{[ A'} \circ \underline
d^{ C']}
   \\&=\left(\mathbf{a}_{ C'}^{A'}\varepsilon^{C' B'}-\mathbf{a}_{ C'}^{B'}\varepsilon^{C'A'}\right)  \partial^{ [0'} \circ \underline d^{1'] }
   \\&  =\operatorname{tr}\mathbf{a}\,\partial^{ [A'}  \underline
 d^{ B']},
 \end{split} \end{equation*}by
$\mathbf{a}  \varepsilon-(\mathbf{a}
\varepsilon)^t=\mathbf{a}  \varepsilon+\varepsilon \mathbf{a}
 ^t=\operatorname{tr}\mathbf{a}\varepsilon.
 $
This is because if we write $\mathbf{a}= \left(  \begin{array}{cc} \alpha &\gamma \\ \beta
& \delta
   \end{array}\right)$, then we have
\begin{equation*}\left(  \begin{array}{cc} \alpha &\gamma\\
\beta & \delta
   \end{array}\right)\left(  \begin{array}{cc} 0 &1\\
 -1& 0
   \end{array}\right)  + \left(  \begin{array}{cc} 0 &1\\
 -1& 0
   \end{array}\right)\left(  \begin{array}{cc} \alpha & \beta\\
 \gamma & \delta
   \end{array}\right)
=\left(  \begin{array}{cc} 0 &\alpha +\delta\\
-\alpha -\delta& 0
   \end{array}\right).
\end{equation*}The lemma is proved.
\end{proof}

\begin{prop} For $ f\in \Gamma(  \mathbb{C}^{2n\times 2},  \mathcal{V}_j )  $,
    $\underline{\pi}_j(g_1)\underline{\pi}_j(g_2)f=\underline{\pi}_j(g_1g_2)f$
outside of singularities.
\end{prop}\begin{proof}
Note that if we identify an element  $ f\in \Gamma(  \mathbb{C}^{2n\times 2},  \mathcal{V}_j )  $ with the tuple $(f_{\mathbf{A}\mathbf{A}'}( \mathbf{z}))$,   the
representation $\underline  \pi_j(g)$ in \eqref{eq:action-U-j<} for $j\leq k$ maps the tuple $(f_{\mathbf{A}}^{\mathbf{A}'}( \mathbf{z}))$ to  the tuple $ \left (
[\underline  \pi_j(g)f]_{\mathbf{B } }^{\mathbf{B }' }  (\mathbf{z})\right)$ with
\begin{equation}\label{eq:reps-tuple}
  \left [\underline  \pi_j(g)f\right]_{\mathbf{A}}^{\mathbf{A}'}  (\mathbf{z}):=\frac 1{\det (J_1(g ^{-1} ,\mathbf{z}))^{j+1}} \left [J_1(g ^{-1}
  ,\mathbf{z})^{-1}\right]_{\mathbf{B}'} ^{\mathbf{A}'}J_2(g ^{-1} ,\mathbf{z})  _{\mathbf{A} } ^{\mathbf{B} } f_{\mathbf{B }}^{ \mathbf{B }' }(g^{-1}.\mathbf{z}).
\end{equation}where $\mathbf{B }=B_1 \ldots B_{ j}$, $ \mathbf{B }'=B_1'\ldots B_{k-j}'$, $  \mathbf{A} =A_1 \ldots A_{ j}$, $\mathbf{A}' =A_1'\ldots A_{k-j}'$,
and
\begin{equation*}\begin{split}
\left [J_1(g ^{-1} ,\mathbf{z})^{-1}\right]_{\mathbf{B}'} ^{\mathbf{A}'} &:=  \prod^{k-j}_{\alpha=1}\left [J_1(g ^{-1} ,\mathbf{z})^{-1}\right]_{B_\alpha '}^{
A_\alpha '}\\
  J_2(g ^{-1} ,\mathbf{z})  _{\mathbf{A} } ^{\mathbf{B} } &:=  \prod^{ j}_{\beta=1}   J_2(g ^{-1} ,\mathbf{z}) _{B_\beta }^{ A_\beta }
\end{split} \end{equation*}
Then
\begin{equation*}\begin{split} \left [{\underline {\pi}}_j(g_1){\underline{\pi}}_j(g_2)  )f\right]_{\mathbf{A}}^{\mathbf{A}'}  (\mathbf{z})&=\frac 1{\det (J_1(g_1
^{-1} ,\mathbf{z}))^{j+1}} \left [J_1(g_1 ^{-1} ,\mathbf{z})^{-1}\right]_{\mathbf{B}'} ^{\mathbf{A}'}J_2(g_1 ^{-1} ,\mathbf{z})  _{\mathbf{A} } ^{\mathbf{B} }
[\underline{\pi}_j(g_2)  )f ]_{\mathbf{B }}^{ \mathbf{B }' }(g_1^{-1}.\mathbf{z})\\&=\frac 1{\det (J_1(g_1 ^{-1} ,\mathbf{z}))^{j+1}} \left [J_1(g_1 ^{-1}
,\mathbf{z})^{-1}\right]_{\mathbf{B}'} ^{\mathbf{A}'}J_2(g_1 ^{-1} ,\mathbf{z})  _{\mathbf{A} } ^{\mathbf{B} } \\
 &\qquad\cdot \frac 1{\det (J_1(g_2 ^{-1} ,g_1^{-1}.\mathbf{z}))^{j+1}} \left [J_1(g_2 ^{-1} ,g_1^{-1}.\mathbf{z})^{-1}\right]_{\mathbf{C}'} ^{\mathbf{B}'}
 J_2(g_2^{-1} ,g_1^{-1}.\mathbf{z})_{\mathbf{B} } ^{\mathbf{C}}f _{\mathbf{C}}^{\mathbf{C}'}  \\
 & = \frac 1{\det (J_1(g_2 ^{-1} g_1^{-1}, \mathbf{z}))^{j+1}} \left [J_1(g_2 ^{-1}g_1^{-1} , \mathbf{z})^{-1}\right]_{\mathbf{C}'} ^{\mathbf{A}'}
 J_2(g_2^{-1}g_1^{-1} , \mathbf{z})_{\mathbf{A} } ^{\mathbf{C}}f _{\mathbf{C}}^{\mathbf{C}'} \\
 &=\left [{\underline {\pi}}_j(g_1 g_2)  )f\right]_{\mathbf{A}}^{\mathbf{A}'}  (\mathbf{z}),
\end{split} \end{equation*}by using the cocycle condition \eqref{eq:cocycle}.
 Namely,  $\underline{\pi}_j(g_1)\underline{\pi}_j(g_2)f=\underline{\pi}_j(g_1g_2)f$
outside of singularities.  It is similar for $j> k$.
\end{proof}

\begin{thm}
   $\underline {\mathcal  {D}}_j:\Gamma(\mathbb{C}^{2n\times 2}, \mathcal{ V}_j)\rightarrow \Gamma(\mathbb{C}^{2n\times 2}, \mathcal{  V}_{j+1}) $ is   ${\rm
   SL}(2n+2,\mathbb{C})$-invariant,
   i.e.
\begin{equation*}\underline {\mathcal  {D}}_j \circ \underline {\pi}_j(g) =
  \underline {  \pi}_{j+1}(g) \circ \underline {\mathcal  {D}}_j .
\end{equation*}
for any $g\in {\rm SL}(2n+2,\mathbb{C})$.
\end{thm}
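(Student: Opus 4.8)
\section*{Proof proposal}

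\emph{The plan} is to deduce the group invariance from its infinitesimal version. Since ${\rm SL}(2n+2,\mathbb{C})$ is connected, and, by the cocycle condition \eqref{eq:cocycle} together with the componentwise formula \eqref{eq:reps-tuple}, $\widetilde{\pi}_j$ satisfies the representation identity away from its singular locus, while $\widetilde{\mathcal{D}}_j$ is a fixed constant-coefficient differential operator in the supervariables $(\mathbf{z},s,\omega)$ (resp. $(\mathbf{z},S,\omega)$), it is enough to prove the infinitesimal intertwining $\widetilde{\mathcal{D}}_j\circ d\widetilde{\pi}_j(X)=d\widetilde{\pi}_{j+1}(X)\circ\widetilde{\mathcal{D}}_j$ for $X$ ranging over a spanning set of $\mathfrak{sl}(2n+2,\mathbb{C})$; integrating along one-parameter subgroups then yields the theorem on the Zariski-dense open set where all objects are regular, hence everywhere by rationality. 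Relative to the parabolic ${\rm P}$ of \eqref{eq:g-1} I use the block decomposition $\mathfrak{sl}(2n+2,\mathbb{C})=\mathfrak{n}_-\oplus\mathfrak{l}\oplus\mathfrak{n}_+$, where $\mathfrak{l}$ is the traceless part of the block-diagonal $\mathfrak{gl}(2)\oplus\mathfrak{gl}(2n)$, $\mathfrak{n}_+$ is the strictly-upper $2\times 2n$ ``$\mathbf{b}$''-block, and $\mathfrak{n}_-$ is the strictly-lower $2n\times 2$ ``$\mathbf{c}$''-block; these three subspaces span $\mathfrak{sl}(2n+2,\mathbb{C})$, and since the intertwining is linear in $X$ it suffices to verify it on each.

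For $X\in\mathfrak{n}_-$ the transformation \eqref{eq:g.z} is a translation $\mathbf{z}\mapsto\mathbf{z}-\mathbf{c}$ with $J_1=J_2=\mathbf{1}$, so $d\widetilde{\pi}_j(X)$ is the constant-coefficient vector field $-\mathbf{c}_{AA'}\partial_{AA'}$ acting on the $\mathbf{z}$-argument only; it commutes with $\widetilde{d}_{A'}$, hence with each of $\partial^{A'}\widetilde{d}_{A'}$, $\widetilde{d}_{0'}\widetilde{d}_{1'}$, $S^{A'}\widetilde{d}_{A'}$, and this case is immediate. For $X\in\mathfrak{l}$ the ``$\mathbf{b}$''-block vanishes, the weight reduces to $\det(\mathbf{a})^{-(j+1)}$, and $d\widetilde{\pi}_j(X)$ is a sum of: the linearization of the substitution $\mathbf{z}\mapsto\mathbf{d}\mathbf{z}\mathbf{a}^{-1}$, the fiber actions on the $s^{A'}$/$S_{A'}$ and $\omega^A$ slots via the Lie-algebra representations \eqref{eq:act-a-s} and \eqref{eq:a-d}, and the scalar $-(j+1)\operatorname{tr}\mathbf{a}$. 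Because the variable $\mathbf{z}_{AA'}$ carries the $\mathbb{C}^{2n}$-index $A$, a direct computation using the Leibnitz law of Proposition~\ref{prop:dd}(3) shows that $\widetilde{d}_{A'}=\omega^A\partial_{AA'}$ intertwines the $\mathfrak{gl}(2n)$-actions on source and target with no anomaly; for the $\mathfrak{gl}(2)$-direction Lemma~\ref{lem:M-varepsilon}, i.e. $\mathbf{a}.\partial^{A'}\circ\widetilde{d}_{A'}+\partial^{A'}\circ\mathbf{a}.\widetilde{d}_{A'}=\operatorname{tr}\mathbf{a}\,\partial^{A'}\widetilde{d}_{A'}$ and its analogue for $S^{A'}\widetilde{d}_{A'}$, produces exactly a $\operatorname{tr}\mathbf{a}$ discrepancy, which is cancelled by the change in the weight exponents ($j+1$ for $\mathcal{V}_j$ versus $j+2$ for $\mathcal{V}_{j+1}$) together with the fiber degrees; in the borderline index $j=k$ one uses instead $\widetilde{d}_{0'}\widetilde{d}_{1'}=-\tfrac12\varepsilon^{A'B'}\widetilde{d}_{A'}\widetilde{d}_{B'}$, which is manifestly $\mathfrak{sl}(2)$-invariant as a contraction against $\varepsilon$ of \eqref{eq:varepsilon}, the remaining scalar direction being handled by the weight as before.

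\emph{The genuine obstacle} is the positive nilradical $\mathfrak{n}_+$ (the ``$\mathbf{b}$''-block), where the geometry is nonlinear. For such $X$ one linearizes \eqref{eq:g.z}: $\delta\mathbf{z}=-\mathbf{z}\mathbf{b}\mathbf{z}$ (a vector field \emph{quadratic} in $\mathbf{z}$), $J_1\sim\mathbf{1}+\mathbf{b}\mathbf{z}$ so $\delta\log\det J_1=\operatorname{tr}(\mathbf{b}\mathbf{z})$, and $J_2\sim\mathbf{1}-\mathbf{z}\mathbf{b}$; hence $d\widetilde{\pi}_j(X)$ is a first-order operator with $\mathbf{z}$-dependent coefficients, plus the matrix actions $-\mathbf{b}\mathbf{z}$ on the $s^{A'}$ (resp. $+(\mathbf{b}\mathbf{z})^t$ on $S_{A'}$) and $-(\mathbf{z}\mathbf{b})^t$ on the $\omega^A$ slots, plus the scalar $-(j+1)\operatorname{tr}(\mathbf{b}\mathbf{z})$. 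Expanding the commutator $[\widetilde{\mathcal{D}}_j,d\widetilde{\pi}_j(X)]$ produces several families of error terms: (i) from the second-order operator $\partial^{A'}\widetilde{d}_{A'}$ differentiating the quadratic coefficient $\mathbf{z}\mathbf{b}\mathbf{z}$, leaving $\mathbf{z}$-linear remainders; (ii) from $\partial^{A'}$ (resp. $S^{A'}$) acting on the $\mathbf{b}\mathbf{z}$-action on the primed slot; (iii) from the $\omega$-derivative inside $\widetilde{d}_{A'}$ hitting the $\mathbf{z}\mathbf{b}$-action on the $\omega$-slot; and (iv) from the scalar $\operatorname{tr}(\mathbf{b}\mathbf{z})$. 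I expect these to cancel in pairs after repeated use of Proposition~\ref{prop:dd}(3), the skew-symmetry $\omega^A\omega^B=-\omega^B\omega^A$, the identity $\mathbf{a}\varepsilon+\varepsilon\mathbf{a}^t=\operatorname{tr}(\mathbf{a})\varepsilon$, and --- crucially --- the exact numerical matching between the weight exponent $j+1$ and the exterior/symmetric degrees $(k-j,j)$ of $\mathcal{V}_j$: this degree bookkeeping is precisely why the exponent must be $j+1$. The heaviest single verification is propagating the $\mathbf{z}$-dependent coefficients through the second-order operator $\widetilde{\mathcal{D}}=\partial^{A'}\widetilde{d}_{A'}$ when $j<k$; the transitional indices $j=k-1$, $j=k$, $j=k+1$ --- where both the shape of $\widetilde{\mathcal{D}}_j$ and that of the bundles $\mathcal{V}_j$ change --- are to be checked separately by the same mechanism. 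Collecting the three cases and invoking connectedness of ${\rm SL}(2n+2,\mathbb{C})$ completes the argument.
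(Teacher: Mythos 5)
Your proposal follows essentially the same route as the paper: reduction to the Lie algebra along one-parameter subgroups (the paper's Proposition \ref{thm:DU-UD-X}), the three-block decomposition of $\mathfrak{sl}(2n+2,\mathbb{C})$ with the $\mathbf{c}$-block trivial, Lemma \ref{lem:M-varepsilon} absorbing the $\operatorname{tr}\mathbf{a}$ anomaly on the Levi block, and the quadratic vector field $\mathbf{z}\mathbf{b}\mathbf{z}$ together with the weight/degree bookkeeping for the nilradical. The cancellations you defer in the $\mathfrak{n}_+$ case close exactly as you predict --- the paper's Section 3 executes them term by term via the Leibnitz law, the counting identity $\omega^A\partial_A f=(\deg f)\,f$, and Lemma \ref{lem:M-varepsilon}, with the transitional case $j=k$ handled separately as you anticipate --- so your plan is sound and matches the published argument.
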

\section{ The  $ {\rm SL}(2n+2, \mathbb{C})$-invariance of the complexified version}
To prove the
 $ {\rm SL}(2n+2, \mathbb{C})$-invariance, it is sufficient to check the invariance under the action of its Lie  algebra
  $ { \mathfrak  sl}(2n+2, \mathbb{C})$ by the following Proposition \ref{thm:DU-UD-X}, which is more easy. See \cite{Jakobsen} for construction of
  invariant operators for ${\rm Mp}(n, R)$ and ${\rm SU}(n, n)$ by this method. Since we do not find appropriate reference for this proposition, we give the proof
  here for convenience of readers.

For   $X\in  \mathfrak{ sl}(2n+2,\mathbb{C})$, consider the subgroup of one parameter $g_t=e^{tX}=I+tX+O(t^2)$. The action of  Lie  algebra is defined as
\begin{equation*}
   d\underline \pi_j (X)f = \left. \frac d{dt} \underline \pi_j (g_t)f\right |_{t=0}.
\end{equation*}

 \begin{prop} \label{thm:DU-UD-X} If
   \begin{equation}\label{eq:DU-UD-X}
      \underline{\mathcal  {D}}_j \circ d\underline \pi_j (X)f = d\underline \pi_{j+1} (X)\circ\underline{\mathcal  {D}}_j f
   \end{equation}
   for any $X\in  \mathfrak{ sl}(2n+2,\mathbb{C})$ and   $f \in\Gamma(\mathbb{C}^{2n\times 2}, \mathcal{ V}_j)  $, then $\underline{\mathcal  {D}}_j$ is  ${\rm
   SL}(2n+2,\mathbb{C})$-invariant.
\end{prop}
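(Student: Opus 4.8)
The plan is to reduce the group-level invariance
$\widetilde{\mathcal D}_j\circ\widetilde\pi_j(g)=\widetilde\pi_{j+1}(g)\circ\widetilde{\mathcal D}_j$
to the infinitesimal identity \eqref{eq:DU-UD-X} by a standard connectedness-plus-one-parameter-subgroups argument. Since $\operatorname{SL}(2n+2,\mathbb{C})$ is connected, it is generated by the image of the exponential map, so it suffices to prove the identity for all $g=g_t=e^{tX}$, $X\in\mathfrak{sl}(2n+2,\mathbb{C})$, and then propagate it along products. Concretely, fix $f\in\Gamma(\mathbb{C}^{2n\times 2},\mathcal V_j)$, fix a point $\mathbf z$ away from the singular locus $\mathcal L_{g_t}$ (which, for $t$ near $0$, is empty or far from $\mathbf z$ since $\mathbf a+\mathbf b\mathbf z$ is close to the identity), and consider the two curves
$t\mapsto\widetilde{\mathcal D}_j(\widetilde\pi_j(g_t)f)(\mathbf z)$ and $t\mapsto\widetilde\pi_{j+1}(g_t)(\widetilde{\mathcal D}_jf)(\mathbf z)$ in the finite-dimensional space $\mathcal V_{j+1}$.

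First I would show these two curves satisfy the same first-order linear ODE with the same initial condition, so they coincide. At $t=0$ both equal $\widetilde{\mathcal D}_jf(\mathbf z)$, since $\widetilde\pi_j(I)=\mathrm{id}$ (the cocycles $J_1,J_2$ reduce to identity matrices and $\det(\mathbf a+\mathbf b\mathbf z)=1$). For the derivative, I use the cocycle/representation property \eqref{eq:representation}–\eqref{eq:cocycle}: $\widetilde\pi_j(g_{t+\epsilon})=\widetilde\pi_j(g_\epsilon)\widetilde\pi_j(g_t)$ on the common domain, hence
$\frac{d}{dt}\widetilde\pi_j(g_t)f=d\widetilde\pi_j(X)\,\widetilde\pi_j(g_t)f$ wherever things are regular (this is exactly where one needs the group law to hold outside singularities, already noted in the excerpt). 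Since $\widetilde{\mathcal D}_j$ is a constant-coefficient differential operator, it commutes with $\frac{d}{dt}$, so
\[
\frac{d}{dt}\,\widetilde{\mathcal D}_j(\widetilde\pi_j(g_t)f)=\widetilde{\mathcal D}_j\bigl(d\widetilde\pi_j(X)\,\widetilde\pi_j(g_t)f\bigr)=d\widetilde\pi_{j+1}(X)\bigl(\widetilde{\mathcal D}_j\widetilde\pi_j(g_t)f\bigr),
\]
where the last equality is the hypothesis \eqref{eq:DU-UD-X} applied to the section $\widetilde\pi_j(g_t)f$. Likewise $\frac{d}{dt}\widetilde\pi_{j+1}(g_t)(\widetilde{\mathcal D}_jf)=d\widetilde\pi_{j+1}(X)\bigl(\widetilde\pi_{j+1}(g_t)\widetilde{\mathcal D}_jf\bigr)$. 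Thus $u(t):=\widetilde{\mathcal D}_j(\widetilde\pi_j(g_t)f)$ and $v(t):=\widetilde\pi_{j+1}(g_t)(\widetilde{\mathcal D}_jf)$ both solve $w'(t)=d\widetilde\pi_{j+1}(X)w(t)$ with $w(0)=\widetilde{\mathcal D}_jf$; here $d\widetilde\pi_{j+1}(X)$ is a first-order differential operator with polynomial coefficients (read off from differentiating \eqref{eq:action-U-j<}/\eqref{eq:reps-j>}), so on a fixed bounded neighborhood of $\mathbf z$ with holomorphic data this is an ODE in a Banach space of holomorphic sections with Lipschitz right-hand side; uniqueness gives $u(t)\equiv v(t)$ near $t=0$.

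Having the identity for all one-parameter subgroups near the identity, I extend it to all of $\operatorname{SL}(2n+2,\mathbb{C})$: the set of $g$ for which $\widetilde{\mathcal D}_j\circ\widetilde\pi_j(g)=\widetilde\pi_{j+1}(g)\circ\widetilde{\mathcal D}_j$ holds (as an identity of differential operators on the common regular locus) is closed under multiplication by \eqref{eq:representation} and contains a neighborhood of $I$, hence equals the whole connected group. Finally I pass from the pointed identity at generic $\mathbf z$ to an identity of sections on $g.D\setminus\mathcal L_g$ for any open $D$: both sides are holomorphic in $\mathbf z$ on that set and agree on the dense subset where the ODE argument applies, so they agree everywhere.

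The main obstacle is making the ODE/uniqueness step rigorous despite the singularities: one must be careful that for the specific $\mathbf z$ and a small interval of $t$, the transformation $g_t^{-1}.\mathbf z$ stays in the domain of $f$ and $\mathbf a_t+\mathbf b_t\mathbf z$ stays invertible, so that $\widetilde\pi_j(g_t)f$ is genuinely defined and the group law \eqref{eq:representation} is available. This is a local-in-$t$, local-in-$\mathbf z$ statement and is handled by continuity of $g_t$ in $t$; the global conclusion then follows by the analytic-continuation/connectedness remarks above rather than by any single global ODE. Everything else — commuting $\widetilde{\mathcal D}_j$ past $d/dt$, computing $d\widetilde\pi_j(X)$, checking $\widetilde\pi_j(I)=\mathrm{id}$ — is routine given Propositions \ref{prop:cocycle} and \ref{prop:dd} and the hypothesis \eqref{eq:DU-UD-X}.
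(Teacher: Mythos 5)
Your overall strategy is the same as the paper's: restrict to one-parameter subgroups $g_t=e^{tX}$, form the two curves $F_1(t)=\widetilde{\mathcal D}_j(\widetilde\pi_j(g_t)f)(\mathbf z)$ and $F_2(t)=\widetilde\pi_{j+1}(g_t)(\widetilde{\mathcal D}_jf)(\mathbf z)$, observe that they agree at $t=0$, and use the infinitesimal hypothesis \eqref{eq:DU-UD-X} to force them to coincide; the extension to general $g$ by connectedness and the multiplicativity \eqref{eq:representation} is common to both, as is the care about staying away from the singular locus. Where you differ is in the final uniqueness step. The paper differentiates the semigroup law in the form $\frac{d}{dt}\widetilde\pi_j(g_t)f=\widetilde\pi_j(g_t)\,d\widetilde\pi_j(X)f$ (generator on the right), iterates to get $F_1^{(m)}(0)=\widetilde{\mathcal D}_j(d\widetilde\pi_j(X))^mf(\mathbf z)$ and $F_2^{(m)}(0)=(d\widetilde\pi_{j+1}(X))^m\widetilde{\mathcal D}_jf(\mathbf z)$, matches these by applying \eqref{eq:DU-UD-X} $m$ times (here it helps that $f$ may be taken to be a polynomial, so each $(d\widetilde\pi_j(X))^mf$ is again a global polynomial section), and concludes $F_1\equiv F_2$ from real-analyticity in $t$. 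You instead put the generator on the left, so both curves satisfy $w'=d\widetilde\pi_{j+1}(X)w$, and you appeal to ODE uniqueness ``in a Banach space of holomorphic sections with Lipschitz right-hand side.'' That clause is the one genuinely soft spot: $d\widetilde\pi_{j+1}(X)$ contains the first-order term $Y=(\mathbf{z}\mathbf{b}\mathbf{z})_{BB'}\partial_{BB'}$ and is therefore unbounded on any single Banach space of holomorphic functions, so Picard--Lindel\"of does not apply as stated; you would need either a scale of Banach spaces (shrinking polydiscs with Cauchy estimates) or, more simply, joint analyticity in $(t,\mathbf z)$ together with an induction on the Taylor coefficients of $w=F_1-F_2$ at $t=0$, which collapses back to the paper's argument. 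A second, minor, point: applying \eqref{eq:DU-UD-X} to $\widetilde\pi_j(g_t)f$ requires the hypothesis for sections that are only locally holomorphic (away from $\mathcal{L}_{g_t}$), not for global ones; this is harmless because \eqref{eq:DU-UD-X} is an identity of differential operators with polynomial coefficients and hence local, but it should be said. With either of these repairs your proof is complete and essentially equivalent to the paper's.
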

\begin{proof}  It is sufficient to prove the
result for $f$ to be a holomorphic polynomial. Let $g_t$ be a subgroup of one parameter generated by $X\in  \mathfrak{
sl}(2n+2,\mathbb{C})$, i.e. $g_t=\exp(tX)$. Differentiate
$
   \underline \pi_j(g_{t +s})f= \underline \pi_j(g_{t }) \underline \pi_j(g_s)f
$
with respect to $s$ at $s=0$ to get
\begin{equation*}
  \frac d{dt} \underline \pi_j(g_{t  })f=\left. \underline \pi_j(g_{t })\frac d{ds} \underline \pi_j(g_s)f\right|_{s=0}=\underline \pi_j(g_{t }) d \underline
  \pi_j(X)f.
\end{equation*}Differentiating it
with respect to $t$ repeatedly, we get
\begin{equation*}
  \frac {d^m}{dt^m} \underline \pi_j(g_{t  })f= \underline \pi_j(g_{t })  (d\underline \pi_j(X))^mf,
\end{equation*}
in particular,
\begin{equation}\label{eq:g-X}
 \left. \frac {d^m}{dt^m} \underline \pi_j(g_{t  })f\right|_{t=0}=    (d\underline \pi_j(X))^mf.
\end{equation}

For fixed $\mathbf{z}\in
\mathbb{C}^{2n\times 2}$, let
\begin{equation*}
   F_1(t)=\underline{\mathcal  {D}}_{j} \underline \pi_j(g_{t  })f(\mathbf{z}), \qquad F_2(t)= \underline \pi_{j+1}(g_{t  })\underline{\mathcal  {D}}_{j}
   f(\mathbf{z}).
\end{equation*}
  Without loss of
generality, we only need to check $F_1(t)\equiv F_2(t)$ for $\mathbf{z}$ near the origin, since both sides are rational function of $\mathbf{z}$. 
Moreover, we only need to check it for $g_t=\exp(tX)$ with $\|X\|$ small, because the identity for $ \exp( X)$ can be obtained by using  the identity for $\exp( X/m) $ for $m$ times.
For such $\mathbf{z}$ and $g$,  $f(g_{t  }^{-1}. \mathbf{z})$ is not singular for $t\in [0,1]$, and so it is a real analytic
curve in $t$. Then  $F_1(t) $ and $F_2(t) $ are both real analytic
functions in $t\in [0,1]$, and
\begin{equation*}
   F_1^{(m)}(0)=\underline{\mathcal  {D}}_{j} (d\underline \pi_j(X))^mf(\mathbf{z}),\qquad  F_2^{(m)}(0)= (d\underline \pi_{j+1}(X))^m\underline{\mathcal  {D}}_{j}
   f(\mathbf{z})
\end{equation*}
by  \eqref{eq:g-X}. But
\begin{equation*}
    \underline{\mathcal  {D}}_{j} (d\underline \pi_j(X))^mf (\mathbf{z}) = (d\underline \pi_{j+1}(X))^m\underline{\mathcal  {D}}_{j} f(\mathbf{z})
\end{equation*}by using the assumed invariance \eqref{eq:DU-UD-X} of $\underline{\mathcal  {D}}_j$ under action of Lie algebra repeatedly. Therefore,
$F_1^{(m)}(0)=  F_2^{(m)}(0)$ for
$m=0,1,\ldots$, and
so
$F_1 \equiv F_2 $ as real analytic functions of $t$. At $t=1$, we get $\underline { \pi}_{j+1}(g) \underline {\mathcal  {D}}_j f(\mathbf{z})=\underline {\mathcal
{D}}_j (\underline
\pi_j(g)f)(\mathbf{z}).$
\end{proof}

  \subsection{Proof of the invariance for the case $j<k $  }
We need to check   \eqref{thm:DU-UD-X} for
\begin{equation*}
   X=\left(  \begin{array}{cc}0&  \mathbf{b}\\
 0& 0
   \end{array}\right),\qquad\left(  \begin{array}{cc} \mathbf{a}&  0\\
 0&  \mathbf{d}
   \end{array}\right) ,\qquad \left(  \begin{array}{cc}0&0\\
 \mathbf{c}  & 0
   \end{array}\right).
\end{equation*}

{\it Case i}.
Since
\begin{equation}\label{eq:g-t-1}
   g_t^{-1}=e^{-tX}=\left(  \begin{array}{cc}\mathbf{1}_2& -t\mathbf{b}\\
 0& \mathbf{1}_{2n}
   \end{array}\right)+O(t^2),\qquad {\rm for}\quad X=\left(  \begin{array}{cc}0&  \mathbf{b}\\
 0& 0
   \end{array}\right),
\end{equation}
we get
   \begin{equation}\label{eq:action-B}\begin{split} g^{-1}_t.\mathbf{z}&= \mathbf{z}  (\mathbf{1}_2-t\mathbf{b} \mathbf{z})^{-1}+O(t^2)= \mathbf{z}+
   t\mathbf{z}\mathbf{b}\mathbf{z}+O(t^2),\\
   \det (\mathbf{1}_2-t\mathbf{b}\mathbf{z})&=1-t\operatorname{tr} (\mathbf{b}\mathbf{z}) +O(t^2).
\end{split}\end{equation}
The {\it infinitesimal vector field} $Y$ of   transformations $g^{-1}_t$ on $\mathbb{C}^{2n\times 2}$  is defined by
   \begin{equation*}
      Yf(z)=\left. \frac d{dt} f  (g^{-1}_t.\mathbf{z})\right |_{t=0}
   \end{equation*}for any holomorphic functions $f$. It follows from \eqref{eq:action-B} that
    \begin{equation}\label{eq:vector-Y}
      Y= (\mathbf{z}\mathbf{b}\mathbf{z})_{B'}^{B} \partial_{B}^{B'}.
   \end{equation}

By differentiation the action \eqref{eq:action-U-j<} of $\underline \pi_j(g)$ and using the Leibnitz law, we
  see that
   \begin{equation}\label{eq:reps-j}\begin{split}
    d\underline \pi_j (X)f= &\left. \frac d{dt} \underline \pi_j (g_t)f\right |_{t=0}\\
    =&   \left. \frac d{dt} \frac {f_{\mathbf{A}}^{\mathbf{A}'}(g_t^{-1}.\mathbf{z})}{\det (\mathbf{1}-t\mathbf{b}\mathbf{z})^{j+1}}
       (\mathbf{1}-t\mathbf{b}\mathbf{z})^{-1}   . s_{ \mathbf{{A}}'}\left(\mathbf{1}   +  \mathbf{z}   (\mathbf{1}-t\mathbf{b}\mathbf{z})^{-1}
       t\mathbf{b}
       \right)  . \omega^{ \mathbf{{A}} }   \right |_{t=0}\\=&
         [ Y + (j+1)\operatorname{tr} ( \mathbf{b}\mathbf{z})]f+
        \mathbf{b}\mathbf{z}    . s_{ {A}'}\cdot \partial^{ {A}'} f +  \mathbf{z} \mathbf{b}   . \omega^{ {A} }\cdot \partial_Af
    \end{split} \end{equation}by \eqref{eq:act-a-s}-\eqref{eq:act-d-omega}.
It follows that
    \begin{equation}\label{eq:DU-UD}\begin{split}
 \underline{\mathcal  {D}} \circ d\underline \pi_j (X)  - d\underline \pi_{j+1} (X)\circ\underline{\mathcal  {D}}   = & \left[\partial^{ [A'} \underline d^{B']}, Y
 +
 (j+1)\operatorname{tr} (\mathbf{b}
 \mathbf{z})\right]   - \operatorname{tr}
 (\mathbf{b} \mathbf{z}) \partial^{ [A'} \underline d^{B']} \\&+
      \left[\partial^{ [A'} \underline d^{B']},( \mathbf{b} \mathbf{z})  .s_{ {A}'}\partial^{A'}  \right] + \left[\partial^{ [A'} \underline d^{B']},
      \mathbf{z}
      \mathbf{b}
       . \omega^{ {A} }\partial_A  \right] . \end{split}  \end{equation}
This is an identity of operators acting on functions in variables $\mathbf{z} _{A}^{A'}$, $s_{A'}$ and
$\omega^{  A}$.
To show it  vanishing, we need to calculate   commutators or anticommutators (i.e. $\{S,T\}=ST+TS$).

\begin{lem} \label{lem:d-Omega}   (1) $ \left\{ \underline d^{A'}  , \mathbf{z} \mathbf{b}   . \omega^{ {A}  }  \right\}   =\omega^A  \Omega^{A'}$, where $\Omega^{A'}:  =  \mathbf{b} _{A}^{A'} \omega^{ A}  
  $;

(2) $\left[\underline d^{A'}, \operatorname{tr} (\mathbf{b}\mathbf{z}) \right] = \Omega^{A'}$

(3) $ \left [\underline d^{A'} , Y \right ]   =         \mathbf{b}\mathbf{z} .\underline d^{ A'} + \mathbf{z}\mathbf{b} .\underline d^{ A'}.$
\end{lem}
\begin{proof}
Noting that \begin{equation*}
   \partial_{A}^{ A' }\mathbf{z}_{ B'}^{B}=\delta_{A}^{ B}\delta_{B'}^{A'},
\end{equation*}by definition,
we get
\begin{equation*}\label{eq:d-BZ-ZB}\begin{split} \left\{ \underline d^{A'}  , \mathbf{z} \mathbf{b}   . \omega^{ {A}  }  \right\} & = \underline d^{A'}\left(    \mathbf{z} \mathbf{b}   . \omega^{ {A}  } \right)=
   \omega^B \partial_{ B}^{ A' } \left(\mathbf{z}^{A}_{ C'}\mathbf{b}^{C'}_{ D}\right) \omega^{ D}
                      =    \omega^A  \mathbf{b}_{ D}^ {A'} \omega^{D}=\omega^A  \Omega^{A'}, \\ \left[\underline d^{A'}, \operatorname{tr} (\mathbf{b}\mathbf{z}) \right] &=\underline d^{A'} \operatorname{tr} (\mathbf{b}\mathbf{z}) =
                      \omega^{
                      A}
                      \partial_{A  }^{    A' }\left(\mathbf{b}_{ D}^{B'}\mathbf{z}_{ B'}^{D}\right)
  =    \mathbf{b}^{A'}_{A} \omega^{ A} = \Omega^{A'},
           \end{split} \end{equation*}
and
\begin{equation*}\label{eq:d-Y}\begin{split}\left[\underline d^{A'} , Y \right]     &=
  \underline d^{  A' }\left(\mathbf{z}\mathbf{b}\mathbf{z}\right)_{ B'}^ {B}\cdot\partial_{B}^{ B'}
                       =     \omega^A \partial_{ A}^{  A' }\left (\mathbf{z}_{ C'}^{B}\mathbf{b}_{ D}^{C'}\mathbf{z}_{ B'}^{D}\right) \partial_{B}^{ B'}    \\
                        &=     \omega^A   \left( \mathbf{b}\mathbf{z}\right)_{  B'}^{A'} \partial_{A}^{ B'}   + \omega^A \left (\mathbf{z}\mathbf{b}\right)_{ A}^
                        {B } \partial_{B}^{ A'} .
           \end{split} \end{equation*}
         The  result follows by using notations \eqref{eq:a-d}. \end{proof}

  For the third   commutator in \eqref{eq:DU-UD},
note that
  \begin{equation}\label{eq:d-BZ-s0}\begin{split}\left[ \underline d^{B'},  \mathbf{b} \mathbf{z}  .s_{ C'}\partial^{C'}  \right]    &= \underline d^{B'}\left(
  \mathbf{b}
  \mathbf{z}
  .s_{ C'}   \right)  \partial^{C'}   =
     \omega^B \partial_{ B}^{ B'} (\mathbf{b}_{ D}^ { D'}\mathbf{z}_{C'}^ {D})  s_ { D'}\partial^{ C'}
                    =  s_ {  D'} \Omega^{ D' }   \partial^{B'},
                      \end{split} \end{equation}by
 $\underline d^{A'}$ commuting $\partial^{ B'}$, and the operator identity for  functions in supervariables:
\begin{equation}\label{eq:operator-identity}
   [UV,W]=UV W- U WV+U W V -W UV=U[ V,W]+[U,W]V.
\end{equation} Then we get
 \begin{equation} \label{eq:d-BZ-s}\begin{split}  \left[\partial^{ [A'} \underline d^{B']}, \mathbf{b} \mathbf{z}   .s_{ C'}\partial^{C'}  \right]   = &\frac 12
\partial^{ A'}\left[ \underline d^{B'},  \mathbf{b} \mathbf{z}   .s_{ C'}\partial^{C'} \right] +\frac 12\left[\partial^{ A'},  \mathbf{b} \mathbf{z}   .s_{
C'}\partial^{C'}
\right]\underline d^{B'}-A'\leftrightarrow B'\\=& \frac 12\partial^{ A'}\circ s_{ C'}\Omega^{C'}   \partial^{B'}
 + \frac 12( \mathbf{b} \mathbf{z})_{C'}^{    A'}   \partial^{C'}  \underline d^{ B'}-A'\leftrightarrow B'\\=&       \Omega^{[A'} \partial^{B']}
 +    \mathbf{b} \mathbf{z}  . \partial^{[A'} \circ \underline d^{B']}
	 \end{split} \end{equation}
since
\begin{equation}\label{eq:partial-s}
  \left[\partial^{ A'},s_ { C'}\right]  = \delta^{ A'} _ { C'},\qquad \left [\partial^{ A'}, \partial^{ B'}\right]=0.
\end{equation}

 Similarly, we have
 \begin{equation}\label{eq:d-BZ-xi0}\begin{split}
    \left[ \underline d^{B'},   \mathbf{z}  \mathbf{b}  . \omega^{ {A} }\partial_A  \right]f& =\omega^A \Omega^{B'  }  \cdot \partial_A f -
      \mathbf{z}  \mathbf{b}   .\omega^{ {A} }\cdot   \underline{d}^{B'}\partial_A f -    \mathbf{z}  \mathbf{b}   .\omega^{ {A} }\partial_A
    \underline
    d^{ B'} f \\
    &= -j    \Omega^{B'} f     -   \mathbf{z}  \mathbf{b}   . \underline d^{ B'}f
    \end{split} \end{equation}
by using the Leibnitz law,  Lemma \ref{lem:d-Omega} (1)  and $\omega^A   \partial_Af=jf$ if $f$ is homogeneous of degree $j$ in $\omega $, and
 \begin{equation*}
   \left \{\underline d^{B'}, \partial_A\right
\}=\underline d^{B'}\circ \partial_A+ \partial_A\circ \underline d^{B'} = \partial_{ A}^{B' }
 \end{equation*}
  as an anti-commutator. Consequently,
  \begin{equation}\label{eq:d-BZ-xi0'}\begin{split}
    \left[\partial^{ [A'} \underline d^{B']}, \mathbf{z}  \mathbf{b}   . \omega^{ {A} }\partial_A  \right]
    &= -j  \partial^{[A'}    \Omega^{B']} - \partial^{ [A'}  \mathbf{z}  \mathbf{b}   . \underline d^{B']},
    \end{split} \end{equation}
 since $\partial^{ A'}$
 commute with other operators.

On the other hand, we have
\begin{equation} \label{eq:d-Y0}\begin{split}  \left[\partial^{ [A'} \underline d^{B']},Y  \right]   = &
\partial^{[ A'}\left[ \underline d^{B']},Y  \right]  =   \partial^{ [A'}   \mathbf{b}\mathbf{z} .\underline d ^{ {B}']   }   +
\partial^{[ A'}  \mathbf{z}\mathbf{b} .\underline d^ { {B}']   }  ,
	 \end{split} \end{equation}by Lemma \ref{lem:d-Omega} (3)  and $\partial^{ A'}$
 commuting $\underline d^{A'}$ and $Y$,
and
\begin{equation}\label{eq:d-tr}\left[\partial^{ [A'} \underline d^{B']},\operatorname{tr} (\mathbf{b}\mathbf{z})\right] =\partial^{ [{A}'}\left[\underline d^{
B']},\operatorname{tr}
(\mathbf{bz})\right]
     =\partial^{[ {A}'}  \Omega^{ {B}']   }   ,
   \end{equation}by Lemma \ref{lem:d-Omega} (2).

Now substituting \eqref{eq:d-BZ-s} \eqref{eq:d-BZ-xi0'}-\eqref{eq:d-tr} into  \eqref{eq:DU-UD}, we see that terms $\partial^{[ {A}'}  \Omega^{ {B}']   }$ and
$\partial^{ [A'}  \mathbf{z}\mathbf{b} .\underline d^{ {B}']   } $ are cancelled each other, respectively. So we get
 \begin{equation*}\begin{split}
 \underline{\mathcal  {D}} \circ d\underline \pi_j (X)  - d\underline \pi_{j+1} (X)\circ\underline{\mathcal  {D}} &  = \partial^{[A'}
 \mathbf{b}\mathbf{z} .\underline d ^{ {B}']   } -
 \operatorname{tr} (\mathbf{b}
 \mathbf{z})\underline{\mathcal  {D}}
 +  \mathbf{b} \mathbf{z}   . \partial^{[A'}   \underline d ^{ {B}']   }   =0\end{split}  \end{equation*}
by using Lemma \ref{lem:M-varepsilon}.

{\it Case ii}.
 Since
   \begin{equation}\label{eq:g-t-2}
      g_t^{-1}=e^{-tX}= \left(  \begin{array}{cc} \mathbf{1}_{ 2} -t\mathbf{a} & 0\\
 0& \mathbf{1}_{2n } -t\mathbf{d}
   \end{array}\right)+O(t^2),  \qquad {\rm for}\quad  X= \left(  \begin{array}{cc}\mathbf{a} & 0\\
 0&\mathbf{d}
   \end{array}\right),
   \end{equation}
      we
have
   \begin{equation*}\begin{split}
    d\underline \pi_j (X)f& = \left.   \frac d{dt}\underline \pi_j\left( g_t\right)\right|_{t=0}f  =    \widetilde{ {Y}} f+(j+1)\operatorname{tr} \mathbf{a} f
      +  \mathbf{a}. s_{ B'}  \cdot \partial^{ B'} f   - \mathbf{d}   . \omega^{ {A} } \cdot \partial_{ {A} } f,
    \end{split} \end{equation*}by differentiate the action \eqref{eq:action-U-j<} of $\underline \pi_j(g_t)$, where
   \begin{equation}\label{eq:widetilde-Y}
     \widetilde{ { Y}}:=\left[(\mathbf{za})^{C}_{ C'}-(\mathbf{dz})^{C}_{ C'}\right] \partial_{C}^{ C'}
   \end{equation}is the infinitesimal vector field of   transformations
   \begin{equation*}
      g_t^{-1}.\mathbf{z}=
 (\mathbf{1}_{2n } -t\mathbf{d})\mathbf{z}(\mathbf{1}_{ 2} -t\mathbf{a} )^{-1}=\mathbf{z}+   t (\mathbf{za} - \mathbf{dz})+O(t^2) .
   \end{equation*}
         Thus
    \begin{equation*}\label{eq:DU-UD-2-j<}\begin{split}
  {\underline{\mathcal  {D}}}\circ d\underline \pi_j (X)   -  d\underline \pi_{j+1} (X)\circ {\underline{\mathcal  {D}}}   =&\partial^{ [{A}'}\left[\underline d^{
  B']},   \widetilde{{Y}}\right ] -\operatorname{tr}
  \mathbf{a}\, \underline{\mathcal  {D}}
        +\left[\partial^{ [{A}'}\underline d^{   B']}, \mathbf{a}. s_{ C'}  \right] \partial^{  C' }
            -\partial^{ [{A}'}\left[\underline d^{  B']}, \mathbf{d}   . \omega^{ {A} } \partial_{ {A} }\right] , \end{split}  \end{equation*}
by $\partial^{ A'}$
 commuting $\partial^{ B'}$, $\underline d^{A'}$ and $\mathbf{d}^t  . \omega^{ {A} } \cdot \partial_{ {A} }$. Here and in the sequel, we use  antisymmetrisation:
 \begin{equation*}
    \partial^{ [{A}'}\left[\underline d^{
  B']},   \widetilde{{Y}}\right ]=\frac 12\partial^{  {A}'}\left[\underline d^{
  B' },   \widetilde{{Y}}\right ]- \frac 12\partial^{B'}\left[\underline d^{
 {A}'  },   \widetilde{{Y}}\right ].
 \end{equation*}
  Note that
   \begin{equation}\label{eq:d-widetildeY}\begin{split}
\left[\underline d^{B'},   \widetilde{ {Y}}\right] &=\omega^{ A}\partial_{A  }^{B'}\left[ \left( \mathbf{ za}\right)^{C}_{ C'}-\left(\mathbf{dz}\right)^{C}_{
C'}\right] \partial_{C}^{ C'} =\omega^{ A} \left(\mathbf{a}_{ C'}^{B'}\partial_{A}^{ C'}- \mathbf{d}_{ A}^{C}\partial_{C}^{B'} \right)
       = \mathbf{a} .\underline d^{B'}- \mathbf{d} . \underline d^{B'},
                  \end{split} \end{equation}
  and
\begin{equation}\label{eq:d-D-xi}\begin{split} \left[\underline d^{B'} ,  \mathbf{d}  .\omega^{ {A} }\partial_A\right]& = -  \mathbf{d}  .\omega^{ {A} }\omega^{
B
} \circ\partial_A \partial_{B}^{ B'}  -  \mathbf{d}  .\omega^{ {A} }\partial_A \circ\omega^{ B } \partial_{B}^{ B'}
 =-  \mathbf{d}  .\omega^{ {A} } \partial_{A}^{ B'}
       = - \mathbf{d}  .\underline d^{B'},
                  \end{split} \end{equation}by using the Leibnitz law and
                 $
                     \left\{\omega^{ B} , \partial_A \right\} =\delta_{A}^{B} .
                 $
                        Now we see that
                  \begin{equation*}\begin{split}
             {\underline{\mathcal  {D}}}\circ d\underline \pi_j (X)   -  d\underline \pi_{j+1} (X)\circ {\underline{\mathcal  {D}}}=  &  \partial^{[ {A}'}
             \mathbf{a} .\underline d^{ {B}']   }- \operatorname{tr} \mathbf{a}\,  \underline{\mathcal  {D}} +  \mathbf{a}_{C'}^{[ A'}\underline d^{   B']}
             \partial^{ C'   }
        \\
        =  &  \partial^{ [{A}'} \mathbf{a} .\underline d^{ {B}']   }- \operatorname{tr} \mathbf{a}\,  \underline{\mathcal  {D}} +
     \mathbf{a} .
     \partial^{ [ {A}'} \underline d^{   B ']   }  =0
                   \end{split}  \end{equation*}
            by  using  \eqref{eq:partial-s} and Lemma     \ref{lem:M-varepsilon} again.

{\it Case iii}.  The last case is trivial, since it is direct to see that $\underline \pi_j (X)f(\mathbf{z})=f(\mathbf{z}+\mathbf{c})$.
 \subsection{Proof of the invariance for  the case   $j> k $  }

{\it Case  i}.
Differentiate \eqref{eq:reps-j>} for
      $g_t^{-1}$ given by \eqref{eq:g-t-1}
  to get
   \begin{equation*}\begin{split}
    d\underline \pi_j (X)f =&\left.   \frac d{dt}\underline \pi_j\left( e^{-tX}\right)\right|_{t=0} f =
         [ Y+ (j+1)\operatorname{tr} ( \mathbf{b}\mathbf{z})]f
-  \mathbf{b}\mathbf{z}   . s^{ {A}'}  \cdot   \partial _{ {A}'} f +    \mathbf{z} \mathbf{b}    . \omega^{ {A} }\cdot
        \partial_Af,
    \end{split} \end{equation*}by \eqref{eq:action-B},
 where $Y$ is given by \eqref{eq:vector-Y}. Then we have
    \begin{equation}\label{eq:d-DU-UD>}\begin{split}
\underline {\widehat{\mathcal D}} \circ d\underline \pi_j (X)   - d\underline \pi_{j+1} (X)\circ\underline{ \widehat{\mathcal  D}}   = &\left[\underline {\widehat{\mathcal D}}, Y\right]  +
(j+1)\left[\underline {\widehat{\mathcal D}},
\operatorname{tr}
 (\mathbf{b}\mathbf{z})\right]- \operatorname{tr} (\mathbf{b}\mathbf{z})\underline {\widehat{\mathcal D}}   \\&- \left[ \underline{ \widehat{\mathcal
 D}}, \mathbf{b}\mathbf{z}  . s^{ {A}'}  \cdot  { \partial}_{ {A}'}\right]+\left[\underline{\widehat{\mathcal D}},   \mathbf{z} \mathbf{b}
  . \omega^{ {A}  }\cdot \partial_A\right ]  .\end{split}  \end{equation}

Note  that
\begin{equation}\label{eq:d-Y-widetilde} \begin{split} \left[\underline{\widehat{\mathcal D}}, {Y}  \right]   = &
s^{[ A'}\left[ \underline d^{   B ']   }, Y   \right]  =  s^{ [A'}   \mathbf{b}\mathbf{z} .\underline d^{   B ']   } + s^{ [A'}  \mathbf{z}\mathbf{b} .\underline
d^{   B ']   }
	 \end{split} \end{equation}by Lemma \ref{lem:d-Omega} (3), $s^{ A'}$ commuting $Y$ and $ \underline d^{B'}$,   and
\begin{equation}\label{eq:d-tr-widetilde}\left[\underline {\widehat{\mathcal D}},\operatorname{tr} (\mathbf{b}\mathbf{z})\right] =s^{ [{A}'}\left[\underline d^{   B ']
},\operatorname{tr}
(\mathbf{bz})\right]
     =s^{ [{A}'} \Omega^{   B ']   }   .
   \end{equation}
By  \eqref{eq:d-BZ-xi0}, we have
 \begin{equation}\label{eq:d-BZ-xi0-widetilde}\begin{split}\left[\underline{\widehat{\mathcal D}},   \mathbf{z}  \mathbf{b}  . \omega^{ {A} }\partial_A
 \right]&=
    s^{ [A'}\left[ \underline d^{   B ']   },   \mathbf{z}  \mathbf{b}   . \omega^{ {A} }\partial_A  \right]
     = -(j+1)   s^{ [{A}'} \Omega^{   B ']   }   -s^{ [A'}     \mathbf{z}  \mathbf{b}    .\underline d^{   B ']   }.
    \end{split} \end{equation}
  Here $\omega^A   \partial_Af=(j+1)f$ for $ f\in \Gamma( \mathbb{C}^{2n\times 2}, \mathcal V_j )  $   homogeneous of degree $j+1$ in $\omega $.
Since\begin{equation*}\label{eq:d-BZ-S0}\begin{split}\left[ \underline d^{B'}, \mathbf{b}\mathbf{z}  . s^{ C'} {  \partial}_{C'}  \right]&=
\underline d^{ B'}\left( \mathbf{b}\mathbf{z}  .s^{C'}  \right) {  \partial}_{C'}    =
     \omega^B \partial_{ B}^{ B' } \left(\mathbf{b}_{ D}^ {C'}\mathbf{z}_{ D'}^{D}\right)  s^{ D'} {  \partial}_{C'}
                     =    s^{ B' } \Omega^{C' }{  \partial}_{C'} ,
                     \end{split} \end{equation*}
                   we get
\begin{equation} \label{eq:d-BZ-S}\begin{split} \left[ \underline{\widehat{\mathcal D}}, \mathbf{b}\mathbf{z}   .s^{ C'} \partial _{C'}    \right]   =
&
 s^{[A'  }\left[ \underline d^{   B ']   }, \mathbf{b}\mathbf{z}  .s^{ C'} \partial _{C'}      \right] +\left[ s^{[A'
 }, \mathbf{b}\mathbf{z}
 .s^{ |C'|} \partial _{C'}    \right]\underline d^{   B ']   }\\=&s^{[A'  }s^{   B ']   } \Omega^{C'} \partial _{C'}
-  \mathbf{b}\mathbf{z}  .s^{ [A'}   \underline d^{B']}\\= &-   \mathbf{b}\mathbf{z}  .s^{ [A'}   \underline d^{B']} ,
	 \end{split} \end{equation}
by \eqref{eq:operator-identity}.

Now substitute \eqref{eq:d-Y-widetilde}-\eqref{eq:d-BZ-S} into \eqref{eq:d-DU-UD>} to get
  \begin{equation*}\begin{split}
 \underline{\widehat{\mathcal D}}\circ  d\underline \pi_j (X)   - d\underline \pi_{j+1} (X)\circ \underline{\widehat{\mathcal D}}
  =    & s^{ [A'}    \mathbf{b}\mathbf{z} .\underline d^{B']}+   \mathbf{b}\mathbf{z}  .s^{ [A'}   \underline d^{B']}-   \operatorname{tr}
  \left(\mathbf{b}\mathbf{z}\right)\underline {\widehat{\mathcal D}}
 =0  \end{split}  \end{equation*}by using Lemma     \ref{lem:M-varepsilon} again.

 {\it Case  ii}.  Differentiate \eqref{eq:reps-j>} for $g_t^{-1}=e^{-tX}$ with $
      X = \left(  \begin{array}{cc} \mathbf{a} & 0\\
 0& \mathbf{d}
   \end{array}\right)
$
  to get
      \begin{equation*}\begin{split}
   [d\underline \pi_j (X )f](\mathbf{z})=& \left.   \frac d{dt}\underline \pi_j\left( e^{-tX}\right)\right|_{t=0}f(\mathbf{z})
  =     \widetilde{{Y}}f   +(j+1)\operatorname{tr} \mathbf{a}f
      - \mathbf{a} . s^{ {A}'} \cdot     {\partial}_{ {A}'} f-  \mathbf{d} .\omega^{ {A} } \cdot \partial_Af,
    \end{split} \end{equation*}
      where $\widetilde{ {Y}}$ is given by \eqref{eq:widetilde-Y}.
      Thus
    \begin{equation*}\begin{split}
 \underline{\widehat{\mathcal D}}\circ d\underline \pi_j (X )  -  d \underline \pi_{j+1} (X )\circ\underline {\widehat{\mathcal D}}  = &\left[\underline{\widehat{\mathcal D}} ,
 \widetilde{ {Y}}\right] -\operatorname{tr}
 (\mathbf{a})\underline{\widehat{\mathcal D}}
  -\mathbf{a} . s^{ {A}'}\left[\underline{\widehat{\mathcal D}} ,   {\partial}_{ {A}'} \right] +\mathbf{d} .\omega^{ {A} }\cdot \left\{\underline{\widehat{\mathcal D}}  ,
  \partial_A\right\} \\
=  &s^{[ A'} \mathbf{a} .\underline d^{B']}-s^{ [A'} \mathbf{d} .\underline d^{B']}  -\operatorname{tr} (\mathbf{a})\underline{\widehat{\mathcal D}}      +\mathbf{a} .  s^{[
    A'}\partial_{A}^{B']} + s^{[ A'} \mathbf{d} .\underline d^{B']}\\
=  & s^{ [A'} \mathbf{a} .\underline d^{ B']} -\operatorname{tr} (\mathbf{a})\underline{\widehat{\mathcal D}}      +\mathbf{a} . s ^{ [ A'} \underline d^{B']}   =0
\end{split}
\end{equation*}
    by using \eqref{eq:d-widetildeY},
    \begin{equation*}
      \left \{\underline{\widehat{\mathcal D}}  , \partial_A\right\}=\underline{\widehat{\mathcal D}}   \partial_A+
    \partial_A\underline{\widehat{\mathcal D}}=s^{[
    A'}\partial_{A}^{B']},
    \end{equation*}
    and  Lemma     \ref{lem:M-varepsilon} again.

{\it Case iii}.  This is trivial.

   \subsection{Proof of the invariance for the case  $k=j$  }  {\it Case  i}.  For $X=\left(  \begin{array}{cc}0& \mathbf{b}\\
 0& 0
   \end{array}\right)$, differentiate representations \eqref{eq:action-U-j<} for $j=k$ and \eqref{eq:reps-j>} for $j=k+1$
  to get
     \begin{equation*}\begin{split}
   [d\underline \pi_{k} (X)f](\mathbf{z})=& \left.   \frac d{dt}\underline \pi_{k}\left( e^{-tX}\right)\right|_{t=0}f =
         [  {Y} + (k+1)\operatorname{tr} (\mathbf{b}\mathbf{z})]f
         +    \mathbf{z}  \mathbf{b}  . \omega^{ {A} }\partial_Af  ,
   \\
   [d\underline \pi_{k+1} (X)F](\mathbf{z})=& \left.   \frac d{dt}\underline \pi_{k+1}\left(  e^{-tX}\right)\right|_{t=0}F=
         [  {Y} + (k+2)\operatorname{tr} (\mathbf{b}\mathbf{z})]F
        +   \mathbf{z}  \mathbf{b}   . \omega^{ {A} }\partial_A F,
    \end{split} \end{equation*}for $f\in \Gamma( \mathbb{C}^{2n\times 2}, \mathcal  {V}_k)$, $F\in \Gamma( \mathbb{C}^{2n\times 2}, \mathcal  {V}_{k+1})$.
    So we have \begin{equation}\label{eq:triangle-U}\begin{split}
\underline{\mathcal D}_k \circ d\underline \pi_{k} (X) -  d\underline \pi_{k+1} (X)\circ \underline{\mathcal D}_k =&
    \left [ \underline{\mathcal D}_k , Y + (k+1)\operatorname{tr} (\mathbf{b}\mathbf{z})\right]
   -\operatorname{tr} (\mathbf{b}\mathbf{z}) \underline{\mathcal D}_k
 +\left[ \underline{\mathcal D}_k ,   \mathbf{z}  \mathbf{b}   . \omega^{ {A} }\partial_A  \right]  .
    \end{split} \end{equation}
Note that $\underline{\mathcal D}_k=\underline d^{0'}\underline d^{1'}$ and
    \begin{equation*}\begin{split}
    \left[\underline{\mathcal D}_k,    \mathbf{z}  \mathbf{b}   . \omega^{ {A} }\partial_A  \right]&=\underline d^{0'}\left[\underline
    d^{1'} ,   \mathbf{z}
    \mathbf{b}
     . \omega^{ {A} }\partial_A \right]+\left[\underline d^{0'} ,    \mathbf{z}  \mathbf{b}   . \omega^{ {A} }\partial_A \right]\underline
    d^{1'}\\
 &=-k   \underline d^{0'} \circ\Omega^{1'}      - \underline d^{0'}\circ    \mathbf{z}  \mathbf{b}   .  \omega^{ {A} }\partial_A^{1'}-(k +1)
 \Omega^{0'} \underline
 d^{1'}     -
    \mathbf{z}
 \mathbf{b}   .\underline d^{0'}\circ\underline d^{1'}\\
 &= k    \Omega^{1'} \underline d^{0'}     -  \omega^{ {A} } \Omega^{0'}\partial_{ A}^{1' }+  \mathbf{z}  \mathbf{b}   .\underline d^{ 1'}
 \circ\underline d^{0'} -(k +1)   \Omega^{0'} \underline d^{1'}     -
   \mathbf{z}
 \mathbf{b}   .\underline d^{ 0'}\circ\underline d^{1'}\\& = - 2k     \Omega^{
   [ 0' }    d^{1']}    -  2  \mathbf{z}  \mathbf{b}   .\underline d^{[ 0'}\circ  d^{1']}
    \end{split} \end{equation*}
  by using \eqref{eq:d-BZ-xi0} repeatedly, Lemma \ref{lem:d-Omega} (1) and
  $\underline d^{0'} (\Omega^{1'}f)=-\Omega^{1'}\underline d^{0'}f $. We also have
            \begin{equation*}\begin{split}\left[\underline{\mathcal D}_k , Y \right]= &\underline d^{0'}\left[\underline d^{1'} , Y
            \right]+\left[\underline d^{0'} , Y \right]\underline
            d^{1'}
     \\
= & \left [\underline d^{0'},(\mathbf{b}\mathbf{z})^{1'}_{  B'} \right ] \underline d^{ B'} +  (\mathbf{b}\mathbf{z})^{1'}_{  B'} \underline d^{0'} \underline d^{
B'}
+\left\{\underline d^{0'},
 \mathbf{z}\mathbf{b} .\omega^B  \right\}\partial_{B }^{1'}
-  \mathbf{z}\mathbf{b} .\omega^B  \partial_{B }^{1'}\underline d^{0'}\\&+( \mathbf{b}\mathbf{z})^{0'}_{  B'} \underline d^{ B'} \underline d^{1'} +
 \mathbf{z}\mathbf{b} .\underline d^{ 0'}\circ\underline d^{1'}
\\
          = & \Omega^{ 1' }    \underline d^{0'} +\operatorname{tr}  (\mathbf{b}\mathbf{z})  \underline d^{0'} \underline d^{ 1'}-\Omega^{0'} \underline d^{1'}
         -
          \mathbf{z}\mathbf{b} .\underline d^{ 1'} \circ\underline d^{0'} + \mathbf{z}\mathbf{b} .\underline d^{ 0'}\circ\underline d^{1'}\\
          = &  - 2 \Omega^{
   [ 0' }    d^{1']}     + \operatorname{tr}  (\mathbf{b}\mathbf{z}) \underline{\mathcal D}_k
             +2  \mathbf{z}  \mathbf{b}   .\underline d^{[ 0'}\circ  d^{1']}
           \end{split} \end{equation*}
     by using Lemma \ref{lem:d-Omega} (1) (3)  and anti-commutativity of $\underline d^{A'}$'s in   Proposition \ref{prop:dd}, and
 \begin{equation*}\begin{split}\left[\underline{\mathcal D}_k, \operatorname{tr}  (\mathbf{b}\mathbf{z})  \right]= &\underline d^{0'}\left[\underline
 d^{1'} , \operatorname{tr}
 (\mathbf{b}\mathbf{z})  \right]+\left[\underline d^{0'} , \operatorname{tr}  (\mathbf{b}\mathbf{z}) \right]\underline d^{1'}
    \\
= &  \underline d^{0'} \circ \Omega^{ 1' }+\Omega^{ 0' }\circ \underline d^{1'}=2 \Omega^{
   [ 0' }    d^{1']}   ,
           \end{split} \end{equation*}
by Lemma \ref{lem:d-Omega} (2). Substitute the above three identities into \eqref{eq:triangle-U} to see    its  vanishing.

 {\it Case  ii}.
       For $X= \left(  \begin{array}{cc} \mathbf{a} & 0\\
 0& \mathbf{d}
   \end{array}\right)$,
     \begin{equation*}\begin{split}
   [d\underline \pi_{k} (X)f](z)=&
         [ \widetilde{{Y} }+ (k+1) \operatorname{tr} (\mathbf{a})]f- \mathbf{d}  . \omega^{ {A} }\partial_Af
        ,
    \\
   [d\underline \pi_{k+1} (X)F](z)=&
         [ \widetilde{{Y} }+ (k+2)\operatorname{tr} (\mathbf{a})]F
        - \mathbf{d} .\omega^{ {A} }\partial_AF ,
    \end{split} \end{equation*}
    where $ \widetilde{{Y}}$ is given by \eqref{eq:widetilde-Y}.
   Then
   \begin{equation*}
   \underline{\mathcal D}_k \circ d\underline \pi_j(X)   - d\underline \pi_{j+1}(X)\circ \underline{\mathcal D}_k =\left[\underline{\mathcal D}_k   , \widetilde{
   Y}  \right] -
     \operatorname{tr} (\mathbf{a}) \underline{\mathcal D}_k
      -\left[\underline{\mathcal D}_k ,
      \mathbf{d} .\omega^{ {A} }\partial_A\right]   =0,
   \end{equation*}
since
\begin{equation*}\begin{split} \left[ \underline{\mathcal D}_k,  \widetilde{ { Y}}\right] & =  \underline d^{0'}\left[\underline d^{1'} ,  \widetilde{ Y
}\right]+\left[\underline d^{0'} ,
\widehat{Y } \right]\underline d^{1'}
     \\&= \underline d^{0'}\circ \mathbf{a}.\underline d^{ 1'} -  \underline d^{0'}\circ\mathbf{d} .\underline d^{1'} +
     \mathbf{a}.\underline d^{
     0'} \underline d^{1'}- \mathbf{d} .\underline d^{0'} \underline d^{1'}\\&=  \operatorname{tr} \mathbf{a}\,
     \underline d^{ 0'} \underline d^{1'}-\underline d^{0'} \circ  \mathbf{d} .\underline d^{ 1'} -  \mathbf{d} .\underline d^{ 0'}\circ\underline d^{1'}
                  \end{split} \end{equation*}
by \eqref{eq:d-widetildeY}  and anti-commutativity of $\underline d^{A'}$'s, and
\begin{equation*}\begin{split} \left[ \underline{\mathcal D}_k,  \mathbf{d}  .\omega^{ {A} }\partial_A\right]& =\underline d^{0'}\left[\underline d^{1'},
\mathbf{d}  .\omega^{
{A}
}\partial_A\right]+\left[\underline d^{0'},  \mathbf{d}  .\omega^{ {A} }\partial_A\right]\underline d^{1'}
     \\& = -\underline d^{0'}\circ\mathbf{d}  .\underline d^{ 1'} -\mathbf{d}  .\underline d^{ 0'} \underline d^{1'}
                  \end{split} \end{equation*}
by \eqref{eq:d-D-xi}.

{\it Case iii}.  This is trivial.

 \section{The  invariance  on   $\mathbb{H}^{ n }$ and complexes on locally   projective flat manifolds  }
\subsection{ The   invariance  on     $\mathbb{H}^{ n }$}

Let $\mathbf{a}=(\mathbf{a}_{jk})_{p\times m}$ be a quaternionic $(l\times m)$-matrix and write
$
    \mathbf{a}_{jk}= {a}_{jk}^1+\textbf{i} {a}_{jk}^2+\textbf{j} {a}_{jk}^3+\textbf{k} {a}_{jk}^4\in\mathbb{H}.
$
We define $\tau(\mathbf{a})$ to be the complex $(2p\times 2m)$-matrix \begin{equation}\label{tau}\tau(\mathbf{a})=\left(
                                                              \begin{array}{cccc}
                                                                \tau(\mathbf{a}_{00}) & \tau(\mathbf{a}_{01}) & \cdots & \tau(\mathbf{a}_{0(m-1)}) \\
                                                                \tau(\mathbf{a}_{10}) & \tau(\mathbf{a}_{11}) & \cdots & \tau(\mathbf{a}_{1(m-1)}) \\
                                                                \cdots & \cdots & \cdots & \cdots \\
                                                                \tau(\mathbf{a}_{(p-1)0}) & \tau(\mathbf{a}_{(p-1)1}) & \cdots & \tau(\mathbf{a}_{(p-1)(m-1)}) \\
                                                              \end{array}
                                                            \right),
\end{equation}where $\tau(\mathbf{a}_{jk})$ is the complex $(2\times2)$-matrix
\begin{equation}\label{2.301}\left(
                                       \begin{array}{cc}
                                         a_{jk}^1+\textbf{i}a_{jk}^2 & -a_{jk}^3-\textbf{i}a_{jk}^4 \\
                                         a_{jk}^3-\textbf{i}a_{jk}^4 & \quad a_{jk}^1-\textbf{i}a_{jk}^2 \\
                                       \end{array}
                                     \right).\end{equation}
                                      This is motivated by the   embedding of quaternionic  numbers into $2\times 2$-matrices  \cite{wan-wang} \cite{Wa10}.

\begin{prop}\label{prop:embed} {\rm   \cite[Proposition 2.1]{wan-wang}} $(1)$ $\tau(\mathbf{ab})=\tau(\mathbf{a})\tau(\mathbf{b})$ for a quaternionic $(p\times
m)$-matrix $\mathbf{a}$ and a quaternionic $(m\times l)$-matrix $\mathbf{b}$. In particular, for $\mathbf{q}'=\mathbf{a}\mathbf{q} $ with $\mathbf{q},\mathbf{q}'
\in\mathbb{H}^n$  and a quaternionic $(n\times
n)$-matrix  $\mathbf{a} $, we have
\begin{equation}\label{eq:Aq}
 \tau(\mathbf{q}')=\tau(\mathbf{a})\tau(\mathbf{q})
\end{equation}
 as complex $(2n\times2)$-matrices.
\end{prop}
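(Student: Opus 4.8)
The plan is to reduce the matrix identity to the case of a single quaternion and then apply ordinary block-matrix multiplication. First I would record that the map $\tau\colon\mathbb H\to M_2(\mathbb C)$ given by \eqref{2.301} (the case of $1\times1$ matrices) is a unital $\mathbb R$-algebra homomorphism: from \eqref{2.301} it is visibly $\mathbb R$-linear with $\tau(1)=\mathbf 1_2$, so by $\mathbb R$-bilinearity it is enough to verify multiplicativity on the basis $1,\textbf i,\textbf j,\textbf k$ of $\mathbb H$. One computes
\begin{equation*}
\tau(\textbf i)=\begin{pmatrix}-\textbf i&0\\ 0&\textbf i\end{pmatrix},\qquad
\tau(\textbf j)=\begin{pmatrix}0&-1\\ 1&0\end{pmatrix},\qquad
\tau(\textbf k)=\begin{pmatrix}0&\textbf i\\ \textbf i&0\end{pmatrix},
\end{equation*}
and checking the nine products of $\tau(\textbf i),\tau(\textbf j),\tau(\textbf k)$ against the quaternion multiplication table (for instance $\tau(\textbf i)\tau(\textbf j)=\tau(\textbf k)=\tau(\textbf i\textbf j)$, $\tau(\textbf j)\tau(\textbf i)=-\tau(\textbf k)$, and $\tau(\textbf i)^2=-\mathbf 1_2=\tau(\textbf i^2)$) completes the verification. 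Conceptually this is automatic, since $\tau$ is the composition of the standard matrix embedding $\mathbb H\hookrightarrow M_2(\mathbb C)$ with entrywise complex conjugation on $M_2(\mathbb C)$, both of which are ring homomorphisms.

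Next I would note that, by its very definition \eqref{tau}, $\tau$ applied to a quaternionic $(p\times m)$-matrix $\mathbf a=(\mathbf a_{jk})$ is the composite of the entrywise map $\mathbf a\mapsto(\tau(\mathbf a_{jk}))\in M_{p\times m}\bigl(M_2(\mathbb C)\bigr)$ with the canonical identification $M_{p\times m}(M_2(\mathbb C))\cong M_{2p\times2m}(\mathbb C)$ that reads an array of $2\times2$ blocks as a single complex matrix. Hence, using $(\mathbf{ab})_{jl}=\sum_k\mathbf a_{jk}\mathbf b_{kl}$ together with the additivity and multiplicativity of $\tau$ on $\mathbb H$ just established, the $(j,l)$-block of $\tau(\mathbf{ab})$ is
\begin{equation*}
\tau\Bigl(\textstyle\sum_k\mathbf a_{jk}\mathbf b_{kl}\Bigr)=\sum_k\tau(\mathbf a_{jk})\tau(\mathbf b_{kl}),
\end{equation*}
which is exactly the $(j,l)$-block of the block-matrix product $\tau(\mathbf a)\tau(\mathbf b)$. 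Since block multiplication is compatible with the above identification, this gives $\tau(\mathbf{ab})=\tau(\mathbf a)\tau(\mathbf b)$ as complex $(2p\times2l)$-matrices. The stated special case \eqref{eq:Aq} then follows by taking $p=m=n$, $l=1$, i.e. viewing $\mathbf q\in\mathbb H^n$ as an $n\times1$ quaternionic matrix and $\mathbf a$ as $n\times n$, so that $\tau(\mathbf q')=\tau(\mathbf a\mathbf q)=\tau(\mathbf a)\tau(\mathbf q)$ as $2n\times2$ complex matrices.

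There is no real obstacle here, and the statement is elementary; the only point that deserves care is the non-commutativity of $\mathbb H$, which makes it a priori conceivable that $\tau$ reverses rather than preserves products. The block-matrix computation in the second step uses multiplicativity (not anti-multiplicativity) of $\tau|_{\mathbb H}$ in an essential way, so one genuinely has to confirm in the first step that $\tau$ is an honest algebra homomorphism and not an anti-homomorphism; everything beyond that is routine bookkeeping with the definitions \eqref{2.301} and \eqref{tau}.
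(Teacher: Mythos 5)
Your proof is correct: the paper itself states this proposition without proof (citing Proposition 2.1 of \cite{wan-wang}), and your argument — verifying that $\tau|_{\mathbb{H}}$ is a genuine (not anti-) algebra homomorphism on the basis $1,\textbf{i},\textbf{j},\textbf{k}$ and then extending to block matrices via the identification $M_{p\times m}(M_2(\mathbb{C}))\cong M_{2p\times 2m}(\mathbb{C})$ — is exactly the standard argument one would give. The computed images $\tau(\textbf{i}),\tau(\textbf{j}),\tau(\textbf{k})$ and the check $\tau(\textbf{i})\tau(\textbf{j})=\tau(\textbf{k})$ are consistent with the definition \eqref{2.301}, so nothing further is needed.
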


By Proposition \ref{prop:embed},  $\tau$ is  an isomorphism from $ \mathfrak
  {sl}(n+1,\mathbb{H}) $ to a subalgebra of $\mathfrak {sl}(2n+2,\mathbb{C})$, and so
is  an isomorphism from $  {\rm {SL}}(n+1,\mathbb{H}) $ to a subgroup of $ {\rm {SL}}(2n+2,\mathbb{C})$. By Proposition \ref{prop:embed},
 $ \mathbb{ C}^2$ and  $ \mathbb{ C}^{2*}$  have the actions of $ {\rm GL}(1,\mathbb{ H}) $ given by $ \mathbf{ q}. s_{A'}: =  \tau(\mathbf{q}).s_{A'}$
and
$
  \mathbf{ q}. s^{A'}:=  \tau(\mathbf{q}). s^{A'}$, respectively, and     $ \mathbb{ C}^{2n*}$  have the action  of $ {\rm GL}(2n,\mathbb{ H}) $ given by $ \mathbf{ d}. \omega^{A }: =  \tau(\mathbf{d}).   \omega^{A }$.
  By embedding $\tau$, we have
\begin{equation}\label{eq:q-z}\tau(\mathbf{q})=  \left( \mathbf{z}_{A'}^{A}
                           \right):=\left(
                                      \begin{array}{rr}
                                      x_{0}+\textbf{i}x_{1} & -x_{2}-\textbf{i}x_{3} \\x_{2}-\textbf{i}x_{3} & x_{0}-\textbf{i}x_{1} \\
                                                                               \vdots\qquad&\vdots\qquad\\
                                         x_{4l}+\textbf{i}x_{4l+1} & -x_{4l+2}-\textbf{i}x_{4l+3} \\
                                            x_{4l+2}-\textbf{i}x_{4l+3} &  x_{4l }-\textbf{i}x_{4l+1} \\
                                          \vdots\qquad&\vdots\qquad
                                      \end{array}
                                    \right),\qquad  \text{ for }\quad  \mathbf{q}=\left(
                                      \begin{array}{c }
                                      \mathbf{q}_{1}  \\\mathbf{q}_{2}  \\
                                                                               \vdots \\
                                        \mathbf{ q}_{n}
                                      \end{array}
                                    \right),
\end{equation}where  $\textbf q_l=x_{4l}+\textbf{i}x_{4l+1} +\textbf{j} x_{4l+2}+\textbf{k}x_{4l+3} $, $l=0,\ldots,n-1$. $\tau(\mathbb{H}^n)$ is a $4n$-dimensional
totally real subspace of $\mathbb{C}^{2n\times 2}$.
Note that we have the inverse $\tau^{-1}:\tau(\mathbb{H}^n)\rightarrow  \mathbb{H}^n$. By applying $\tau^{-1} $ to \eqref{eq:g.z}, we get
  the  fractional linear    action  \eqref{eq:g.q} on $\mathbb{H}^{ n }$ for $
   g^{-1}\in {\rm
 {SL}}(n+1,\mathbb{H})$ in  \eqref{eq:g-H}.
The action \eqref{eq:g.q} a group action:
\begin{equation}\label{eq:g.q2}
 g_2^{-1}.(  g_1^{-1}.\mathbf  q )=(g_2^{-1}   g_1^{-1}).\mathbf  {q}
\end{equation}for $g_1,g_2  \in{\rm
 {SL}}(n+1,\mathbb{H}) $
 by applying $\tau^{-1}$ to \eqref{eq:g.z2}.

Restricted to $\tau(\mathbb{H}^n)$,
derivatives $\frac {\partial  }{\partial \mathbf{z}_{A}^{ A' }}$ can be realized as
the
first-order differential operators
\begin{equation}\label{eq:nabla-x}\begin{split}\left(
                               \nabla_{A}^{A' }
                           \right):&=\frac 12\overline{\left(
                                      \begin{array}{rr}
                                      \partial_{x_{0}}+\textbf{i}\partial_{x_{1}} & -\partial_{x_{2}}-\textbf{i}\partial_{x_{3}} \\
                                        \partial_{x_{2}}-\textbf{i}\partial_{x_{3}} & \partial_{x_{0}}-\textbf{i}\partial_{x_{1}} \\
                                         \vdots\qquad&\vdots\qquad\\
                                         \partial_{x_{4l}}+\textbf{i}\partial_{x_{4l+1}} & -\partial_{x_{4l+2}}- \textbf{i}\partial_{x_{4l+3}} \\
                                        \partial_{x_{4l+2}}-\textbf{i}\partial_{x_{4l+3}} & \partial_{x_{4l}}-\textbf{i}\partial_{x_{4l+1}} \\
                                        \vdots\qquad&\vdots\qquad
                                      \end{array}
                                    \right)}^t\\
                                    &=\frac 12 \left(
                                      \begin{array}{rrrrrr}
                                      \partial_{x_{0}}-\textbf{i}\partial_{x_{1}} &\partial_{x_{2}}+\textbf{i}\partial_{x_{3}} &\cdots & \partial_{x_{4l}}-\textbf{i}\partial_{x_{4l+1}} & -\partial_{x_{4l+2}}+ \textbf{i}\partial_{x_{4l+3}}  &\cdots\\
                                      -  \partial_{x_{2}}+\textbf{i}\partial_{x_{3}} &- \partial_{x_{0}}+\textbf{i}\partial_{x_{1}} &\cdots   &
                                        \partial_{x_{4l+2}}+\textbf{i}\partial_{x_{4l+3}} & \partial_{x_{4l}}+\textbf{i}\partial_{x_{4l+1}}  &\cdots
                                                                             \end{array}                                    \right) .
 \end{split} \end{equation}

 Now define $d^{A'}  :\Gamma(\mathbb{H}^{ n}, \wedge^{\tau}\mathbb{C}^{2n})\rightarrow \Gamma(\mathbb{H}^{ n},
\wedge^{\tau+1}\mathbb{C}^{2n})$   as
\begin{equation}\label{eq:d-real}\begin{aligned}&
d^{A'}F:=
   \nabla_{A}^{ A' }f_{\mathbf{A} }~\omega^A\omega^{\mathbf{A}},
\end{aligned}\end{equation}
for $F= f_{\mathbf{A} } \omega^{\mathbf{A}}\in \Gamma( \mathbb{H}^{ n}, \wedge^{\tau}\mathbb{C}^{2n})$. Denote
$
  \triangle u=d^{0'} d^{1'}u .
$
Operators in   the  $k$-Cauchy-Fueter complex \eqref{eq:CF-complex} are given by
 \begin{equation*}   \mathcal{ D}_j
 = \left\{
    \begin{array}{ll}     \partial^{[A'}   d^{B']} ,\quad &{\rm if}\quad j=0, \ldots, k-1,\\
      d^{[A'}  d^{B']} ,\quad &{\rm if}\quad j=  k ,\\
    s^{[A'}  d^{B']} ,\quad &{\rm if}\quad  j=k+1,\ldots,2n+1.
    \end{array}\right.
\end{equation*}We  take the nontrivial one with  $[A' B']=[0'1']$.

\begin{proof}[Proof of Theorem \ref{thm:k-invariant}]
It is direct to check that
\begin{equation*}
   \nabla_{A}^{ A' }\tau(\mathbf{q})_{ B'}^{B}=\delta_{A}^{ B}\delta_{B'}^{A'},
\end{equation*}
(cf. \cite[Lemma 3.1]{wan-wang}) for $ \nabla_{A}^{A' }$ given by \eqref{eq:nabla-x} and $\tau(\mathbf{q})_{ B'}^{B} $ given by \eqref{eq:q-z}. Therefore,
\begin{equation}\label{eq:nabla-nabla}
    \nabla_{A}^{ A' }\left[\underline F(\tau(\mathbf{q}))\right]=\frac {\partial \underline F}{\partial \mathbf{z}^{A}_{ A' }}(\tau(\mathbf{q}))
\end{equation}
for any holomorphic function $\underline F$ on $\mathbb{C}^{2n\times 2}$. By embedding \eqref{eq:q-z}, it is easy to see that
a complex valued polynomial $P(x_0, \cdots, x_{4n-1})$ on    $ \mathbb{ R}^{4n} $  can be extended naturally to a  holomorphic  polynomial
\begin{equation*}
   \underline P(\mathbf{z}):=P\left(\frac {\mathbf{z}_{0}^{ 0' }+\mathbf{z}_{1}^{ 1' }}2,\frac {\mathbf{z}_{0}^{ 0' }-\mathbf{z}_{1}^{ 1' }}{2\mathbf{i}},\frac {\mathbf{z}_{1}^{ 0' }-\mathbf{z}_{0}^{ 1' }}2,\frac {\mathbf{z}_{0}^{ 0' }+\mathbf{z}_{1}^{ 1' }}{-2\mathbf{i}}, \cdots\right)
\end{equation*}
  on $ \mathbb{
C}^{4n} $
satisfying
$
   P(\mathbf{q})=\underline P(\tau(\mathbf{q}))
$.
Then by \eqref{eq:nabla-nabla}, we have
\begin{equation*}
     d^{A'}  P(\mathbf{q})=\left (\underline d^
     {A'}  \underline P\right)  (\tau(\mathbf{q})).
\end{equation*}

If $j\leq k $,
  for a $ \mathcal  {V}_j   $-polynomial   $f= f_{\mathbf{A}}^{ \mathbf{A}'}(\mathbf{q}) s_{\mathbf{A}'} \omega^{\mathbf{A} }$ on $\mathbb{H}^n$,   we
  can construct a holomorphic $ \mathcal  {V}_j   $-polynomial  $\underline F= \underline F_{\mathbf{A} }^{\mathbf{A}'} (\mathbf{z})s_{\mathbf{A}'} \omega^{\mathbf{A}
  }$  on $ \mathbb{ C}^{2n\times 2}
    $   so that
$
   f(\mathbf{q})=\underline F(\tau(\mathbf{q})).
$
Then
\begin{equation*}
   \left[\pi_j(g)f\right] (\mathbf{q})= \left[\underline \pi_j(\tau(g))\underline F\right] (\tau(\mathbf{q}))
\end{equation*}for $g  \in{\rm
 {SL}}(n+1,\mathbb{H}) $, by comparing definition $\pi_j(g)$ in \eqref{eq:Uj<} with $\underline \pi_j(\tau(g))$ in \eqref{eq:action-U-j<}. Moreover, we have
 \begin{equation*}
     \partial^{[A'}  d^{B']} f(\mathbf{q})= \left(\partial^{[A'} \underline d^{B']} \underline   F \right)  (\tau(\mathbf{q})).
 \end{equation*}
So the invariance in
Theorem \ref{thm:k-invariant} follows from the identity \eqref{eq:DU-UD-X} in Theorem \ref{thm:DU-UD-X}. It is similar for the case $j\geq k$.\end{proof}

\begin{cor}$ \pi_j(g )$ in \eqref{eq:Uj<} and \eqref{eq:Uj>} satisfy the identity \eqref{eq:representation} of the  representations outside singularities.   \end{cor}

A domain $D$ is called {\it a domain of $k$-regularity} if ones cannot find two nonempty open sets $D_1$ and $D_2$ such that (1) $D_1$ is connected,
$D_1\nsubseteq D$ and $D_2\subset D_1\cap D$; (2) for each $f\in \mathcal{O}_k(D)$, there is a $\widetilde{ {f}}\in \mathcal{O}_k(D_1)$ satisfying $f=
\widetilde{{f}}$
on $D_2$.

\begin{cor}\label{cor:k-inv}  A linearly convex domain is a   domain  of $k$-regularity.
 \end{cor}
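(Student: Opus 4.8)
The plan is to reduce the statement to the simple rational $k$-regular functions produced by Theorem~\ref{thm:k-invariant}, combined with a classical argument for domains of holomorphy adapted to the quaternionic setting. First I would recall that, by the invariance in Theorem~\ref{thm:k-invariant} and the explicit formula \eqref{eq:k-regular-fraction}, for each quaternionic affine hyperplane
\[
  \mathcal{L}=\{\mathbf{q}\in\mathbb{H}^n:\ \mathbf{a}+\mathbf{b}\mathbf{q}=0\}
\]
of quaternionic dimension $n-1$ (encoded by a row $(\mathbf{a},\mathbf{b})$, which one may complete to an element of $\mathrm{SL}(n+1,\mathbb{H})$) the $\mathbb{C}^2$-valued rational function
\[
  K_{\mathcal{L}}(\mathbf{q})=\frac{1}{|\mathbf{a}+\mathbf{b}\mathbf{q}|^{2}}\,(\mathbf{a}+\mathbf{b}\mathbf{q})^{-1}. s^{\mathbf{A}'}
\]
is $k$-regular on $\mathbb{H}^n\setminus\mathcal{L}$. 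The key point is that $K_{\mathcal{L}}$ has its entire singular set exactly on $\mathcal{L}$, so it is $k$-regular on any open set avoiding $\mathcal{L}$ but genuinely singular (its components blow up) as one approaches any point of $\mathcal{L}$.

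Next I would argue by contradiction in the spirit of the classical proof that a convex domain is a domain of holomorphy. Suppose $D$ is linearly convex but not a domain of $k$-regularity, so there exist nonempty open sets $D_1,D_2$ with $D_1$ connected, $D_1\not\subset D$, $D_2\subset D_1\cap D$, such that every $f\in\mathcal{O}_k(D)$ extends to some $\widetilde f\in\mathcal{O}_k(D_1)$ agreeing with $f$ on $D_2$. Pick a point $\mathbf{p}\in D_1\setminus D$; after shrinking one may arrange that the segment-type path from a point of $D_2$ to $\mathbf{p}$ meets $\partial D$ at some boundary point $\mathbf{p}_0$, and $\mathbf{p}$ lies on the far side. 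By linear convexity there is a quaternionic hyperplane $\mathcal{L}$ of dimension $n-1$ through $\mathbf{p}_0$ with $\mathcal{L}\cap D=\emptyset$. Then $K_{\mathcal{L}}\in\mathcal{O}_k(D)$, while its purported extension to $D_1$ would have to be $k$-regular near $\mathbf{p}_0$; but by uniqueness of the extension (the components are real-analytic and $D_1$ is connected, so an extension agreeing with $K_{\mathcal{L}}$ on the open set $D_2$ is unique) the extension must coincide with $K_{\mathcal{L}}$ on the connected open subset of $D_1$ reachable without crossing $\mathcal{L}$, hence it blows up at $\mathbf{p}_0\in\mathcal{L}\cap D_1$, contradicting $k$-regularity (in particular smoothness) of $\widetilde{K_{\mathcal{L}}}$ there.

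The main obstacle I expect is the geometric bookkeeping in choosing $\mathbf{p}_0$ and verifying that the extension is forced to agree with $K_{\mathcal{L}}$ all the way up to $\mathbf{p}_0$: one must make sure the relevant piece of $D_1$ on which uniqueness applies actually reaches the hyperplane $\mathcal{L}$, i.e. that the connected component of $D_1\setminus\mathcal{L}$ containing $D_2$ has $\mathbf{p}_0$ in its closure. This is handled by connecting $D_2$ to $\mathbf{p}$ by a path in $D_1$, letting $\mathbf{p}_0$ be the last point of this path lying in $\overline{D}$, choosing the supporting hyperplane $\mathcal{L}$ at $\mathbf{p}_0$, and noting that the portion of the path before $\mathbf{p}_0$ stays in $D\subset\mathbb{H}^n\setminus\mathcal{L}$, so it lies in a single component of $D_1\setminus\mathcal{L}$ whose closure contains $\mathbf{p}_0$; real-analytic continuation along this path then identifies $\widetilde{K_{\mathcal{L}}}$ with $K_{\mathcal{L}}$ near $\mathbf{p}_0$. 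A minor technical point is that $\mathbf{p}_0$ could a priori be a point where $\mathbf{a}+\mathbf{b}\mathbf{q}$ is not merely zero but where the chosen completion to $\mathrm{SL}(n+1,\mathbb{H})$ degenerates; this is avoided since any row $(\mathbf{a},\mathbf{b})$ with $(\mathbf{a},\mathbf{b})\neq 0$ and $\mathcal{L}$ of the right dimension can be completed to an invertible quaternionic matrix, so the construction of $K_{\mathcal{L}}$ is always available.
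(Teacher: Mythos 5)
Your proof is correct and takes essentially the same approach as the paper: associate to each boundary point a supporting quaternionic hyperplane $\mathcal{L}$ and use the rational $k$-regular function \eqref{eq:k-regular-fraction}, which blows up along $\mathcal{L}$, to rule out extension past that point. The paper's own proof is just this observation stated in two lines, and your filling-in of the $D_1,D_2$ bookkeeping via the identity theorem for (real-analytic) $k$-regular functions is the standard completion --- the only slip being that $\mathbf{p}_0$ should be taken as the \emph{first} exit point of the path from $D$ rather than the last point of the path in $\overline{D}$, so that the portion of the path before $\mathbf{p}_0$ really stays in $D$.
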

\begin{proof} Let $D$  be a linearly convex domain. Then for any $\mathbf{p}\in\partial D$,  there is a  hyperplane of quaternionic
dimension $n-1$ passing through $\mathbf{p}$ and not intersecting $D$. We can write the hyperplane as
\begin{equation*}
   \mathbf{a}+\mathbf{bq}=0
\end{equation*} for some $\mathbf{a}\in\mathbb{H}, \mathbf{b  }\in\mathbb{H}^n$. Then the
 $k$-regular function  \eqref{eq:k-regular-fraction} tends to infinity as $\mathbf{q}\rightarrow \mathbf{p}$, i.e. any boundary point is not $k$-regularly
 extendible. So $D$ is a domain of $k$-regularity.\end{proof}

By definition, it is easy to see that each convex domain in $ \mathbb{H}^n$ or the product $D=D_1\times\cdots\times D_n$ of
domains in $  \mathbb{H} $ is a linearly convex domain. In particular, any domain  in $  \mathbb{H} $ is linearly convex. Thus any domain  in $  \mathbb{H} $ is  a
domain  of $k$-regularity. But ones expect that the cohomologies of  the $k$-Cauchy-Fueter complex  on a domain $D\subset\mathbb{H}$  vanish if and only if the
domain $D$ is $k$-pseudoconvex   \cite{wang19}. This is different from the complex case, because the $\overline{\partial}$-complex on $\mathbb{C}$ is trivial,
while  the $k$-Cauchy-Fueter complex on $\mathbb{H}$  is not when $k>1$.

  The {\it quaternionic projective space} $\mathbb{H}P^{n }$ of dimension $n $ is the set of right quaternionic lines in $\mathbb{H}^{n+1}$. More
precisely,
$
\mathbb{H}P^{n }:=(\mathbb{H}^{n+1}\backslash\{0\})/\sim,
$
where $\sim$ is the equivalent relation: $\mathbf{ p }\sim \mathbf{q}$ in $\mathbb{H}^{n+1}$ if there is a non-zero quaternion
number $\lambda$ such that  $ \mathbf{p } =  \mathbf{q  }\lambda $. For a subset $E$ of $\mathbb{H}P^{n }$, the {\it dual complement}
$E^*$
is defined to be the set of hyperplane not intersecting $E$. For simplicity, assuming $E,E^* \subset \mathbb{H}^ {n }$ and $0\in E$, then the quaternionic version
of the {\it
Fantappi\`e transformation} is defined as
\begin{equation}\label{eq:Fantappie }
   \int_{E^*} \frac {(\mathbf{1}+\mathbf{b q})^{-1}   . s_{\mathbf{A}'} }{|\mathbf{1}+\mathbf{bq}|^{2 }} d\mu^{\mathbf{A}'}(\mathbf{b} ),
 \end{equation}
which  is  $k$-regular, where $\mu^{\mathbf{A}'}$'s are measures on $E^*$. It is an interesting question when any $k$-regular function on a set $E$ is the
 superposition of the simple fractions of the form \eqref{eq:k-regular-fraction}. In the complex case, it is known that the result holds if and only if $E$ is
 $\mathbb{C}$-convex
\cite[\S 3.6]{APS}.

  \subsection{Complexes over  locally   projective flat manifolds  } Let the parabolic subgroup ${\rm P}$ of $ {\rm
 {SL}}(n+1,\mathbb{H}) $ consist  matrices of the form
\begin{equation*}
   \left(  \begin{array}{cc}\mathbf   {a}& \mathbf  {b}\\
0& \mathbf  {d}
   \end{array}\right).
\end{equation*}Then the homogeneous space $ {\rm
 {SL}}(n+1,\mathbb{H})/ {\rm P}$ is the   quaternionic projective space  $\mathbb{H}P^{n }$ of dimension $n $.
\begin{equation*}
   \left(  \begin{array}{cc} \mathbf   {1} &0\\
\mathbf   {q}& \mathbf   {1}_{  n}
   \end{array}\right){\rm P} ,\qquad  \mathbf   {q}\in \mathbb{H} ^{n },
\end{equation*}
  constitute an open subset of $\mathbb{H}P^{n }$, which is diffeomorphic to quaternionic   space $\mathbb{H}^{ n }$.

Recall that  ${\rm Sp}(n ,1)$ is the group of all $(n+1)\times(n+1)$ quaternionic matrices which preserve the following hyperhermitian form:
\begin{align*}
Q(\mathbf{q},\mathbf{p})=- \overline{{q}_{1}}{p}_{1}-\cdots- \overline{{q}_{n}}{p}_{n}+
\overline{ {q}_{n+1 }}{p}_{n+1 },
\end{align*}
where $\mathbf{q}=(q_{1},\cdots,q_{n+1}),\ \mathbf{p}=(p_{1},\cdots,p_{n+1})\in \mathbb{H}^{n+1}$. It is a subgroup of $  \text{SL} (n+1,\mathbb{H})$.
Under the induced action of ${\rm Sp}(n ,1)$ on $\mathbb{H}P^{n }$,  $
D_{+}:=\{\mathbf{q} \in\mathbb{H}P^{n };Q(\mathbf{q} ,\mathbf{q} )>0\}
$ is an invariant subset which is equivalent to the \emph{quaternionic hyperbolic space} \cite{Shi-Wang}.
In this case we must have $q_{n+1}\neq0 $, and a point in  $D_{+}$ is equivalent to $
({q_{1}{q^{-1}_{n+1}}},\cdots,{q_{n }}{q^{-1}_{n+1}},1).$
 So we have the  ball model
for  quaternionic hyperbolic space: \begin{align*}
B^{4n }=\left\{\mathbf{q}\in\mathbb{H}^{n };| \mathbf{q}| <1\right\}.
\end{align*}
Thus the space $\mathcal{O}_k(B^{4n })$ of all $k$-regular functions on the ball $B^{4n } $
  is invariant under the action of the rank-$1$ Lie group ${\rm Sp}(n ,1)$.

A group $\Gamma$ is called \emph{discrete} if the topology on $\Gamma$ is the discrete topology.
We say that $\Gamma$ acts \emph{discontinuously} on a space $X$ at point $\mathbf{q}$ if there is a neighborhood $U$ of $\mathbf{q}$, such that $g(U)\cap
U=\emptyset$
for all
but finitely many $g$ of $ \Gamma$. Let $\Gamma$ be a discrete subgroup of $ \text{SL} (n+1,\mathbb{H}) $. Then   $\mathbb{H}P^{n }/\Gamma$
is a locally   projective flat manifold. In particular, if $\Gamma$ is a discrete subgroup of ${\rm Sp}(n ,1)\subset \text{SL} (n+1,\mathbb{H})$, then $  B^{4n
}/\Gamma$ is a locally   projective flat manifold. If $\Gamma$  is a cocompact or   convex cocompact
 subgroup of ${\rm Sp}(n ,1) $, then $  B^{4n }/\Gamma$ is a compact manifold without   or with  boundary, respectively.
In the latter case, the boundary is a
 spherical  quaternionic contact manifold (cf. \cite{Shi-Wang}).

Let $M$ be a locally   projective flat manifold with coordinates
charts $\{(U_{\alpha},\phi_{\alpha})\}$ with $\phi_{\alpha}:U_{\alpha}\rightarrow \mathbb{H}^n$, whose  transition
maps $
   \phi_{\beta}\circ \phi_{\alpha}^{-1}$ in \eqref{eq:charts}
 are   given by $ g_{\beta\alpha} \in \text{SL} (n+1,\mathbb{H})$ with the induced action \eqref{eq:g.q}:
   \begin{equation*}
      \xymatrix{
                &U_{\alpha}\cap U_{\beta} \ar[dr]^{\phi_{\beta}} \ar[dl]_{\phi_{\alpha}}            \\
\phi_{\alpha} (U_{\alpha}\cap U_{\beta}) \ar[rr]^{g_{\alpha\beta}^{-1}} & &    \phi_{\beta} (U_{\alpha}\cap U_{\beta})        }
   \end{equation*}It is obvious that
   \begin{equation*}
      g_{\alpha\beta}^{-1}=g_{\beta\alpha}.
   \end{equation*}
       $J_2 $ can be used to glue trivial bundles  $ \phi_{\alpha}(U_{\alpha})\times \mathbb{C}^{2n*}$  to obtain the bundle $E^*  $ by the transition functions of bundles given by
\begin{equation}\label{eq:equivalence-relation}
   \widehat{g}_{\beta\alpha} : \phi_{\alpha}(U_{\alpha}\cap U_{\beta})\times \mathbb{C}^{2n*}\rightarrow \phi_{\beta} (U_{\alpha}\cap U_{\beta})\times \mathbb{C}^{2n*},\qquad\left (\mathbf{q}, \omega^A\right)\mapsto  \left( g_{\alpha\beta}^{-1}.\mathbf{q},  J_2 \left(g_{\alpha\beta}^{-1}  , \mathbf{q} \right) .\omega^A\right).
\end{equation}
 Because the transition functions satisfy the compatibility condition:
 \begin{equation*} \begin{split} \widehat{g}_{ \gamma\beta}\circ\widehat{g}_{\beta\alpha}( \mathbf{q}, \omega^A )&= \widehat{g}_{ \gamma\beta}\left(  g ^{-1}_{\alpha\beta}.\mathbf{q},  J_2 \left(g ^{-1}_{\alpha\beta} , \mathbf{q} \right).\omega  ^ A\right) \\&=\left(  g_{\beta\gamma }^{-1}g ^{-1}_{\alpha\beta}.\mathbf{q}, \left[J_2
 \left(g_{\beta\gamma }^{-1}    ,g_{\alpha\beta}^{-1}. \mathbf{q} \right)J_2\left ( g_{\alpha\beta}^{-1}  ,\mathbf{q} \right)\right].\omega^A\right) \\
 &= \Big(   {g}_{\alpha \gamma}^{-1} .\mathbf{q},
 J_2
 \Big(g_{\alpha\gamma}^{-1}   ,  \mathbf{q} \Big)  .\omega^A\Big ),
  \end{split}   \end{equation*}
 by applying the cocycle identity \eqref{eq:pi-q}  in Proposition \ref{prop:cocycle} to $g_2^{-1}=g_{\beta\gamma }^{-1}, g_1^{-1}=  g_{\alpha\beta}^{-1}$.

 Similarly,  $J_1^{-1}$  can be used to glue trivial bundles  $ \phi_{\alpha}(U_{\alpha})\times \mathbb{C}^{2  }$  to obtain the bundle $H$
 by the transition functions of bundles given by
\begin{equation}\label{eq:equivalence-relation-2}
   \widehat{g}_{\beta\alpha} : \phi_{\alpha}(U_{\alpha}\cap U_{\beta})\times \mathbb{C}^{2 }\rightarrow \phi_{\beta} (U_{\alpha}\cap U_{\beta})\times \mathbb{C}^{2 },\qquad\left (\mathbf{q}, s_{ A'}\right)\mapsto  \left( g_{\alpha\beta}^{-1}.\mathbf{q},  J_1^{-1} \left(g_{\alpha\beta}^{-1}  ,\mathbf{q} \right) .s_{ A'}\right).
\end{equation}
    $J_1 $  can be used to  construct the bundle $ H ^*$.
$\wedge^\tau E $ is the $\tau$-th exterior product of the bundle $ E $, while $\odot^\sigma H^*$   is  $\sigma$-th symmetric product of the bundle  $ H^*$.
$\wedge^2H^*$ is a
complex line bundle defined similarly.
Moreover, we can define real line bundle
$\mathbb{R}[-1] $
by the transition functions of bundles given by
\begin{equation}\label{eq:equivalence-relation-1}
   \widehat{g}_{\beta\alpha} : \phi_{\alpha}(U_{\alpha}\cap U_{\beta})\times \mathbb{R} \rightarrow \phi_{\beta} (U_{\alpha}\cap U_{\beta})\times \mathbb{R} ,\qquad\left (\mathbf{q}, t\right)\mapsto  \left( g_{\alpha\beta}^{-1}.\mathbf{q}, \left|J_1^{-1} (g_{\alpha\beta}^{-1}  ,\mathbf{q} ) \right|^2 t\right).
\end{equation}
where
 \begin{equation*}\left|J_1^{-1} (g_{\alpha\beta}^{-1}  ,\mathbf{q} ) \right|^2 =
   \frac {1
   }{|\mathbf{a}+\mathbf{b}\mathbf{q} |^{2 }} ,\qquad {\rm for }\quad  g_{\alpha\beta}^{-1}=\left(  \begin{array}{cc}\mathbf   {a} &\mathbf   {b} \\
 \mathbf  {c} & \mathbf  {d}
   \end{array}\right)\in \text{SL} (n+1,\mathbb{H}).
\end{equation*}
 We can also define the bundles
$\mathbb{R}_{\pm}[-1]$ and  $\underline{\mathbb{R}}_{-} [-1]$ by  $  \mathbb{R} $ replaced by $\mathbb{R}_{+}=[0,\infty) $, $\mathbb{R}_{-}=( -\infty,0] $  and
$\underline{\mathbb{R}}_{-}=[ -\infty,0]  $, respectively. By definition, we have the isomorphism of 
complex line bundles:  
\begin{equation}\label{eq:line-bundles}
   \wedge^2H^*\cong \mathbb{C} [-1].
\end{equation}

When $j\leq k $, a global section of
$  \odot^{k-j }{H} \otimes \wedge^{j } {E}^* [-j-1] $ is
given by a family of local sections
   $f_\beta\in \Gamma
(\phi_{\alpha}(U_{\beta}),  \mathcal{ V}_{j })$ such that
\begin{equation}\label{eq:section-compatibility}
  \widehat{g}_{\beta\alpha}^*(f_\beta)=f_\alpha,
\end{equation} where
  $ \mathcal V_{j }= \odot^{k-j }\mathbb{C}^{2  }\otimes \wedge^j \mathbb{C}^{2n* }[-j-1]  $.
If writing
\begin{equation*}
   f_\beta(\mathbf{q} )=  (f_\beta)^{\mathbf{A}'}_ { \mathbf{A}}(\mathbf{q} ) s_ { \mathbf{A}'}\omega^{\mathbf{A} }
\end{equation*}as in \eqref{eq:f-supervariables}, substituting
  \eqref{eq:equivalence-relation}-\eqref{eq:equivalence-relation-1} into \eqref{eq:section-compatibility} and comparing it with the definition  of representation  $\pi_j(g)$ in
  \eqref{eq:Uj<},  we see that \eqref{eq:section-compatibility} can be  rewritten as
\begin{equation}\label{eq:equivalence-relation-pi}
\left[\pi_j(g_{\alpha\beta})  f_\beta\right](   \mathbf{q} ) =f_\alpha\left(   \mathbf{q} \right ).
\end{equation}
Thus   a family of local sections
   $f_\beta\in \Gamma
(\phi_{\beta}(U_{\beta}),  \mathcal{ V}_{j })$ give us a global section of
$  \odot^{k-j }{H}^* \otimes \wedge^{j } {E} [-j-1] $ if and only if \eqref{eq:equivalence-relation-pi} is satisfied.

It follows from   the ${\rm SL}(n+1,\mathbb{H})$-invariance of $\mathcal  {D}_j $  in Theorem \ref{thm:k-invariant} that
\begin{equation}\label{eq:D-bundle}
  \mathcal  {D}_j  f_\alpha (  \mathbf{q} )=\mathcal  {D}_j\Big[\pi_j(g_{\alpha\beta}) f_\beta\Big]( \mathbf{q} )=\Big[\pi_{j+1}(g_{\alpha\beta})
  \mathcal  {D}_j
  f_\beta\Big](  \mathbf{q} ) .
\end{equation}Namely,
$\{\mathcal  {D}_j f_\alpha\}$ gives us a section of
$  \odot^{k-j-1}{H}^* \otimes \wedge^{j+1} {E} [-j-2] $.
Therefore,
$\mathcal  {D}_j $ is a well defined operator between bundles:
\begin{equation*}
\mathcal  {D}_j :  \Gamma\left(M, \odot^{k-j }{H}\otimes \wedge^j {E}^*  [-j-1]\right)
\longrightarrow\Gamma \left(M, \odot^{k-j-1}{H} \otimes \wedge^{j+1} {E}^* [-j-2] \right).
\end{equation*}

It is similar for $j\geq k $. Thus we get    the $k$-Cauchy-Fueter complex  \eqref{eq:quaternionic-complex-diff} on locally  projective
flat manifolds.
 In particular, the $k$-th operator in the $k$-Cauchy-Fueter complex give us
 \begin{equation}
\mathcal  {D}_k=d^{  0' } d^{  1' }:  \Gamma\left( M, \wedge^k {E}^* [ -k-1] \right)
\longrightarrow\Gamma \left(  M,  \wedge^{k+2} {E}^* [ -k-2] \right).
\end{equation}
If $k=0$, it is
the Baston operator
 $
\triangle:  \Gamma\left( M, \mathbb{R} [ -1]\right)
\longrightarrow\Gamma \left(  M,  \wedge^2 {E}^* [ -2] \right).
$

\begin{rem} In \eqref{eq:D-bundle}, we apply  the ${\rm SL}(n+1,\mathbb{H})$-invariance   Theorem \ref{thm:k-invariant} to holomorphic functions  $f_\alpha$ locally defined on $\phi_\alpha(U_\alpha)$. But functions in the theorem are globally defined.  This can be done by approximating holomorphic functions  $f_\alpha$ on given convex domain   by polynomials.
\end{rem}

\section{The   quaternionic Monge-Amp\`{e}re operator on   locally   projective flat manifolds  }

  \subsection{The   cone   bundle $ \text{SP}^{2p}E^*$ of strongly positive $2p$-elements }

Recall that
a $ 2p $-form $\omega\in \wedge^{2p}\mathbb{C}^{2n *}$ is said to be \emph{elementary strongly positive} (cf. e.g. \cite[\S 3.1]{wan-wang} \cite{wang21}) if there
exist linearly
independent right $\mathbb{H}$-linear mappings $\eta_j:\mathbb{H}^n\rightarrow \mathbb{H}$ , $j=1,\ldots,p$, such that
\begin{equation}\label{eq:elementary-strongly} \omega=\eta_1^*\underline{\omega}^{0'}\wedge
\eta_1^*\underline{\omega}^{1'}\wedge\ldots\wedge\eta_p^*\underline{\omega}^{0'}\wedge \eta_p^*\underline{\omega}^{1'},\end{equation}
where
$\{\underline{\omega}^{0'},\underline{\omega}^{1'}\}$ is a basis of $\mathbb{C}^{2}$. The right $\mathbb{H}$-linear mapping $\eta_j$ is identified with a row vector in $\mathbb{H}^n$, and so
$\tau(\eta_j)$
is a
$2\times 2n $-complex matrix. Thus,
\begin{equation}\label{eq:elementary-strongly0}
   \eta_j^*\underline{\omega}^{A'}=\tau(\eta_j)^{A'}_ A \omega^A.
\end{equation}
In this section, we use the wedge product to denote the product of Grassmannian variables, which are consistent  with notations in pluripotential theory.

A $ 2p $-element   $\omega$  is called \emph{strongly positive} if it belongs to the convex cone
$\text{SP}^{2p}\mathbb{C}^{2n* }$  generated by elementary strongly positive $2p$-elements.
It is said to be \emph{positive} if for any   strongly positive element
$\eta\in \text{SP}^{2n-2p}\mathbb{C}^{2n* }$, $\omega\wedge\eta$ is positive.
The trivial cone   bundles $U_\alpha\times\text{SP}^{2p}\mathbb{C}^{2n* }$ can be glued by $ J_2   $ to a  cone   bundle $ \text{SP}^{2p}E^* $, a subbundle of $
\wedge^{2p}E^* $,  by the
following proposition.

\begin{prop} If $\omega $ is (elementary  strongly or strongly) positive $2p$-form, then  $ J_2(g^{-1}  ,\mathbf{q} ) . \omega$ is (elementary
strongly or strongly) positive $2p$-form for $
   g  \in {\rm
 {SL}}(n+1,\mathbb{H})$.
\end{prop}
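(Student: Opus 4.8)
The plan is to reduce the statement, in both cases, to the case of an elementary strongly positive $\omega$ together with a single $\mathbb{H}$-linear functional, where it becomes a one-line computation with the embedding $\tau$. Since $v\mapsto J_2(g^{-1},\mathbf{q})^t.v$ is $\mathbb{C}$-linear on $\wedge^{2p}\mathbb{C}^{2n}$ and $\text{SP}^{2p}\mathbb{C}^{2n}$ is by definition the convex cone generated by the elementary strongly positive $2p$-elements, the claim for strongly positive $\omega$ follows once it is known for elementary strongly positive $\omega$. Writing $J_2:=J_2(g^{-1},\mathbf{q})\in {\rm GL}(n,\mathbb{H})$ and using that the action of the group element $J_2^t\in {\rm GL}(2n,\mathbb{C})$ on the exterior algebra $\wedge^{\bullet}\mathbb{C}^{2n}$ is compatible with the wedge product, an elementary $\omega$ as in \eqref{eq:elementary-strongly} satisfies $J_2^t.\omega=\bigwedge_{j=1}^{p}\bigl(J_2^t.\eta_j^*\widetilde{\omega}^0\bigr)\wedge\bigl(J_2^t.\eta_j^*\widetilde{\omega}^1\bigr)$, so everything comes down to computing $J_2^t.(\eta^*\widetilde{\omega}^\beta)$ for a single right $\mathbb{H}$-linear map $\eta:\mathbb{H}^n\to\mathbb{H}$.

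First I would carry out that computation. By \eqref{eq:elementary-strongly0} and the convention $\mathbf{d}.\omega^A=\mathbf{d}_{BA}\omega^B$ (so that $J_2^t.\omega^A=\tau(J_2)_{AB}\omega^B$),
\[
 J_2^t.\bigl(\eta^*\widetilde{\omega}^\beta\bigr)=\tau(\eta)_{\beta A}\,\tau(J_2)_{AB}\,\omega^B=\bigl[\tau(\eta)\tau(J_2)\bigr]_{\beta B}\,\omega^B=\tau(\eta J_2)_{\beta B}\,\omega^B=(\eta J_2)^*\widetilde{\omega}^\beta,
\]
where the third equality is Proposition \ref{prop:embed} (multiplicativity of $\tau$; cf. \eqref{eq:Aq}) applied to the $1\times n$ vector $\eta$ and the $n\times n$ matrix $J_2$, and the last equality is \eqref{eq:elementary-strongly0} read for the functional $\eta J_2$, which is again right $\mathbb{H}$-linear since it is the composition of $\mathbf{q}\mapsto J_2\mathbf{q}$ with $\eta$. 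Substituting back, $J_2^t.\omega=\bigwedge_{j=1}^{p}(\eta_j J_2)^*\widetilde{\omega}^0\wedge(\eta_j J_2)^*\widetilde{\omega}^1$.

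It then only remains to see that $\eta_1 J_2,\dots,\eta_p J_2$ are linearly independent whenever $\eta_1,\dots,\eta_p$ are: if $\sum_j\lambda_j(\eta_j J_2)=0$ with $\lambda_j\in\mathbb{H}$, then, left scalar multiplication commuting with right multiplication by the matrix $J_2$, we get $\bigl(\sum_j\lambda_j\eta_j\bigr)J_2=0$, whence $\sum_j\lambda_j\eta_j=0$ by the invertibility of $J_2$, and hence all $\lambda_j=0$. Therefore $J_2^t.\omega$ has again the form \eqref{eq:elementary-strongly}, i.e. it is elementary strongly positive, and the proposition follows. The part requiring most care is the bookkeeping in the displayed computation — keeping track of which side the matrix multiplication happens on, so as to land on the genuinely $\mathbb{H}$-linear functional $\eta J_2$ rather than something only $\mathbb{C}$-linear; this is precisely where Proposition \ref{prop:embed} is needed. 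Everything else is the routine reduction to the generators of the cone.
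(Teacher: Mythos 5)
Your proposal is correct and follows essentially the same route as the paper: reduce to elementary strongly positive generators of the cone, compute $J_2^t.(\eta_j^*\widetilde{\omega}^\beta)=(\eta_j J_2)^*\widetilde{\omega}^\beta$ via the multiplicativity of $\tau$, and observe that the new functionals are again linearly independent right $\mathbb{H}$-linear maps. Your explicit verification of the linear independence of the $\eta_j J_2$ (via invertibility of $J_2\in{\rm GL}(n,\mathbb{H})$) is a detail the paper merely asserts, so this is a welcome addition rather than a deviation.
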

\begin{proof} If $ \omega$ is a elementary strongly positive $2p$-form with $\omega$ given by \eqref{eq:elementary-strongly}, then  we have
\begin{equation*}\begin{split}
  J_2(g ^{-1} ,\mathbf{q} ) .\omega =&    J_2(g^{-1}  ,\mathbf{q} ) .\eta_1^*\underline{\omega}^{0'}\wedge
 J_2(g^{-1}  ,\mathbf{q} ) .\eta_1^*\underline{\omega}^{1'}\wedge\ldots\wedge J_2(g^{-1}  ,\mathbf{q} ) .\eta_k^*\underline{\omega}^{0'}\wedge {J
 }_2(g^{-1}  ,\mathbf{q} ). \eta_p^*\underline{\omega}^{1'}\\
 =&  \widehat{\eta}_1^*\underline{\omega}^{0'}\wedge
 \widehat{\eta}_1^*\underline{\omega}^{1'}\wedge\ldots\wedge \widehat{ \eta}_p^*\underline{\omega}^{0'}\wedge  \widehat{\eta}_p^*\underline{\omega}^{1'},
\end{split} \end{equation*}for $
   g^{-1}=\left(  \begin{array}{cc}\mathbf{a}&\mathbf{b}\\
\mathbf{c}&\mathbf{d}
   \end{array}\right)\in {\rm
 {SL}}(n+1,\mathbb{H})$, i.e. $J_2(g ^{-1} ,\mathbf{q} ) .\omega $ is also a elementary strongly positive $2p$-form,
where
\begin{equation*}
   \widehat{ \eta}_j={ \eta}_j \cdot (\mathbf{d} -(\mathbf{c}+\mathbf{d}\mathbf{q} ) (\mathbf{a}+\mathbf{b}\mathbf{q})^{-1}\mathbf{b})  :
\mathbb{H}^n\rightarrow \mathbb{H},\qquad  j=1,\ldots, p
\end{equation*}are linearly
independent right $\mathbb{H}$-linear mappings (row vectors),
since
\begin{equation*}\begin{split} J_2(g^{-1}  ,\mathbf{q} ) .\eta_j^*\underline{\omega}^{A'}& =\tau(\eta_j)^{A'}_ A  \tau\left(\mathbf{d} -(\mathbf{c}+\mathbf{d}\mathbf{q} ) (\mathbf{a}+\mathbf{b}\mathbf{q})^{-1}\mathbf{b}\right)^  A_{B}\omega^B
  =\widehat\eta_j^*\underline{\omega}^{A'}.
\end{split} \end{equation*}
Consequently, a $2p$-form   in the convex cone
$\text{SP}^{2p}\mathbb{C}^{2n* }$ is mapped by $
   g^{-1} \in {\rm
 {SL}}(n+1,\mathbb{H})$ to a form  also in this cone.  So (strongly)  positivity is preserved.
\end{proof}

  \subsection{ Closed positive  currents and ``integrals" }
\begin{prop}\label{prop:g-Vol} For $
    {g^{-1}}=\left(  \begin{array}{cc}\mathbf{a}&\mathbf{b}\\
\mathbf{c}&\mathbf{d}
   \end{array}\right)\in {\rm
 {SL}}(n+1,\mathbb{H})$,
   \begin{equation}\label{eq:g-Vol}
      T_{g^{-1}}^{*}  \operatorname{Vol}=\frac {1 }{|\mathbf{a}+\mathbf{bq}|^{4n+4}} \operatorname{Vol}
   \end{equation}where $ \operatorname{Vol}$ is the standard volume form of $\mathbb{R}^{4n}$.
\end{prop}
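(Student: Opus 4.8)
The plan is to compute the real Jacobian determinant of the diffeomorphism $T_{g^{-1}}:\mathbb{H}^n\setminus\mathcal{L}_g\to\mathbb{H}^n$, $\mathbf{q}\mapsto\mathbf{w}:=(\mathbf{c}+\mathbf{d}\mathbf{q})(\mathbf{a}+\mathbf{b}\mathbf{q})^{-1}$, show it equals $|\mathbf{a}+\mathbf{b}\mathbf{q}|^{-4n-4}$, and then invoke positivity to get the transformation law for $\operatorname{Vol}$. First I would identify the differential of $T_{g^{-1}}$. Fix $\mathbf{q}\notin\mathcal{L}_g$ and a tangent vector $v\in\mathbb{H}^n$; differentiating the defining relation $\mathbf{w}\,(\mathbf{a}+\mathbf{b}\mathbf{q})=\mathbf{c}+\mathbf{d}\mathbf{q}$ along $\mathbf{q}+tv$ at $t=0$ gives $\dot{\mathbf{w}}\,(\mathbf{a}+\mathbf{b}\mathbf{q})+\mathbf{w}\,\mathbf{b}\,v=\mathbf{d}\,v$, hence
\begin{equation*}
(dT_{g^{-1}})_{\mathbf{q}}(v)=\bigl(\mathbf{d}-(\mathbf{c}+\mathbf{d}\mathbf{q})(\mathbf{a}+\mathbf{b}\mathbf{q})^{-1}\mathbf{b}\bigr)\,v\,(\mathbf{a}+\mathbf{b}\mathbf{q})^{-1}=J_2(g^{-1},\mathbf{q})\,v\,J_1(g^{-1},\mathbf{q})^{-1}.
\end{equation*}
So $(dT_{g^{-1}})_{\mathbf{q}}=L\circ R$, where $L(v)=J_2(g^{-1},\mathbf{q})\,v$ is left multiplication by the matrix $J_2(g^{-1},\mathbf{q})\in{\rm GL}(n,\mathbb{H})$ and $R(v)=v\,J_1(g^{-1},\mathbf{q})^{-1}$ is entrywise right multiplication by the quaternion $J_1(g^{-1},\mathbf{q})^{-1}$, so that $\det(dT_{g^{-1}})_{\mathbf{q}}=\det_{\mathbb{R}}L\cdot\det_{\mathbb{R}}R$.

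Next I would handle the two factors separately. Right multiplication by a nonzero quaternion $\lambda$ on $\mathbb{H}=\mathbb{R}^4$ equals $|\lambda|$ times the isometry $v\mapsto v\lambda/|\lambda|$, which lies in the connected group $\{v\mapsto vu:|u|=1\}\subset{\rm SO}(4)$; hence its determinant is $|\lambda|^4$, and therefore $\det_{\mathbb{R}}R=|J_1(g^{-1},\mathbf{q})^{-1}|^{4n}=|\mathbf{a}+\mathbf{b}\mathbf{q}|^{-4n}$ on $\mathbb{H}^n=\mathbb{R}^{4n}$. For $L$, recall from \eqref{eq:q-z} that assigning to $\mathbf{q}\in\mathbb{H}^n$ the first column of $\tau(\mathbf{q})$ defines an $\mathbb{R}$-linear isomorphism $\iota:\mathbb{H}^n\xrightarrow{\sim}\mathbb{C}^{2n}$, and by Proposition \ref{prop:embed} one has $\iota(\mathbf{d}\mathbf{q})=\tau(\mathbf{d})\,\iota(\mathbf{q})$ for $\mathbf{d}\in{\rm GL}(n,\mathbb{H})$. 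Thus $L$ is conjugate via $\iota$ to the $\mathbb{C}$-linear map $\tau(J_2(g^{-1},\mathbf{q}))$ on $\mathbb{C}^{2n}$, so $\det_{\mathbb{R}}L=\bigl|\det_{\mathbb{C}}\tau(J_2(g^{-1},\mathbf{q}))\bigr|^{2}$.

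Finally I would pin down $\det_{\mathbb{C}}\tau(J_2(g^{-1},\mathbf{q}))$. Applying the ring homomorphism $\tau$ of Proposition \ref{prop:embed} to the triangular decomposition \eqref{eq:triangular-decomposition} (which holds over $\mathbb{H}$ as well, being purely algebraic, and is the one underlying \eqref{eq:g.q}) and using $\tau(J_\mu(g^{-1},\mathbf{q}))=J_\mu(\tau(g^{-1}),\tau(\mathbf{q}))$, taking $\det_{\mathbb{C}}$ gives $\det_{\mathbb{C}}\tau(g^{-1})=\det_{\mathbb{C}}\tau(J_1(g^{-1},\mathbf{q}))\cdot\det_{\mathbb{C}}\tau(J_2(g^{-1},\mathbf{q}))$, since the unipotent factors and the lower-triangular factor (with identity diagonal blocks) have determinant $1$. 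Because $\tau$ maps ${\rm SL}(n+1,\mathbb{H})$ into ${\rm SL}(2n+2,\mathbb{C})$ the left-hand side is $1$, and a direct computation from \eqref{2.301} gives $\det_{\mathbb{C}}\tau(\lambda)=|\lambda|^2$ for $\lambda\in\mathbb{H}$, so $\det_{\mathbb{C}}\tau(J_1(g^{-1},\mathbf{q}))=|\mathbf{a}+\mathbf{b}\mathbf{q}|^{2}$ and hence $\det_{\mathbb{C}}\tau(J_2(g^{-1},\mathbf{q}))=|\mathbf{a}+\mathbf{b}\mathbf{q}|^{-2}$, giving $\det_{\mathbb{R}}L=|\mathbf{a}+\mathbf{b}\mathbf{q}|^{-4}$. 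Multiplying, $\det(dT_{g^{-1}})_{\mathbf{q}}=|\mathbf{a}+\mathbf{b}\mathbf{q}|^{-4}\,|\mathbf{a}+\mathbf{b}\mathbf{q}|^{-4n}=|\mathbf{a}+\mathbf{b}\mathbf{q}|^{-(4n+4)}>0$, so $T_{g^{-1}}^{*}\operatorname{Vol}=|\mathbf{a}+\mathbf{b}\mathbf{q}|^{-(4n+4)}\operatorname{Vol}$. The step I expect to be the main obstacle is the determinant bookkeeping for $L$: one must pass from quaternionic left multiplication to a genuine complex-linear map using the single-column realization $\iota$ (rather than the full $2n\times 2$ matrix $\tau(\mathbf{q})$, which would double the dimension) and then use the ${\rm SL}$-normalization correctly; the rest is routine.
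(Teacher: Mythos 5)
Your proof is correct, but it takes a genuinely different route from the paper's. The paper works on the complexified side: it first establishes the holomorphic transformation law $\widetilde{T}_{g^{-1}}^{*}\operatorname{Vol}_{\mathbb{C}}=\det(\mathbf{a}+\mathbf{b}\mathbf{z})^{-(2n+2)}\operatorname{Vol}_{\mathbb{C}}$ in \eqref{eq:g-Volc} by reducing to the Lie algebra (a case-by-case infinitesimal computation of $X.\operatorname{Vol}_{\mathbb{C}}$ for the three generator types, via the differential formula \eqref{eq:g-dz}), and then pulls everything back to $\mathbb{R}^{4n}$ through $\tau$ using $\tau^{*}\operatorname{Vol}_{\mathbb{C}}=4^{n}\operatorname{Vol}$ and $\det\tau(\mathbf{a}+\mathbf{b}\mathbf{q})=|\mathbf{a}+\mathbf{b}\mathbf{q}|^{2}$. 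You instead compute the real Jacobian of $T_{g^{-1}}$ directly on $\mathbb{H}^{n}$, factoring the differential as $v\mapsto J_{2}\,v\,J_{1}^{-1}$ and evaluating the two real determinants separately: $|\lambda|^{4}$ for right multiplication by a quaternion (a connectedness argument), and $|\det_{\mathbb{C}}\tau(J_{2})|^{2}$ for quaternionic left multiplication via the single-column realization $\iota$, with $\det_{\mathbb{C}}\tau(J_{2})=|\mathbf{a}+\mathbf{b}\mathbf{q}|^{-2}$ coming from the triangular decomposition exactly as in the paper's identity \eqref{eq:q-det}. Your route is shorter and more elementary — it avoids the Lie-algebra reduction and the group-to-algebra integration step entirely — and it makes the $E^{*}\otimes H^{*}$ splitting of the differential (Salamon's $EH$-formalism, which the paper only notes in a remark) visible at the real level. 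What the paper's route buys in exchange is the complexified formula \eqref{eq:g-Volc} itself, which is a statement of independent interest consistent with the complexification machinery used throughout the rest of the paper. All the individual steps you flag as delicate — the single-column identification $\iota$ rather than the full $2n\times2$ matrix, the ${\rm SL}$-normalization giving $\det\tau(J_{1})\det\tau(J_{2})=1$, and $\det_{\mathbb{C}}\tau(\lambda)=|\lambda|^{2}$ — check out.
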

\begin{proof}
Recall that fractional linear transformation $\underline{T}_{g^{-1} }$ in (\ref{eq:g.z}) is a holomorphic mapping from $\mathbb{C}^{2n\times 2}$ minus a subspace
to $\mathbb{C}^{2n\times 2}$,
and
\begin{equation*}
  \operatorname{Vol}_\mathbb{C}:= \bigwedge_{A=0}^{2n-1} d\mathbf{z}^{A}_{0'}\wedge \bigwedge_{A=0}^{2n-1} d\mathbf{z}^{A}_{1'}
\end{equation*}
is a $(4n,0)$-form  on $\mathbb{C}^{2n\times 2}$. Its pull-back by $\underline{T}_{g^{-1} }$ is
 \begin{equation}\label{eq:g-Volc}
     \underline{T}_{g^{-1}}^{*} \operatorname{Vol}_\mathbb{C}=\frac {1 }{\det(\mathbf{a}+\mathbf{bz})^{2n+2}} \operatorname{Vol}_\mathbb{C}.
   \end{equation}If we denote  $X=\left.\frac d{dt}\underline{T}_{g_t^{-1}}^{*}\right|_{t=0}$, it is sufficient to prove
\begin{equation}\label{eq:X-vol}
   X. \operatorname{Vol}_\mathbb{C} =-(2n+2)\operatorname{tr}( \mathbf{\hat a}+\mathbf{\hat b}\mathbf{z} )  \operatorname{Vol}_\mathbb{C} ,\qquad {\rm for}\quad  X=\left(  \begin{array}{cc}\mathbf{\hat a}&\mathbf{\hat b}\\
\mathbf{\hat c}&\mathbf{\hat d}
   \end{array}\right)\in  \mathfrak{ sl}(2n+2,\mathbb{C}) .
\end{equation}

To show \eqref{eq:X-vol}, note that  for $ g^{-1} \in {\rm SL}(2n+2, \mathbb{C})$ given by \eqref{eq:g-1}, we have
 \begin{equation}\label{eq:g-dz}\begin{split}
   \underline{ T}_{g^{-1}}^{*} d\mathbf{z}_{ A'}^ {A} &=  {d} \left(\mathbf{  c}+\mathbf{\hat d}\mathbf{z} \right) \left(\mathbf{  a}+\mathbf{ b}\mathbf{z})^{-1}\right)_{
   A'}^{A}=   {d}
    \left[(\mathbf{  c}+\mathbf{  d}\mathbf{z} )_{E'}^{A} [(\mathbf{  a}+\mathbf{  b}\mathbf{z})^{-1}]_{A'}^{E'}\right] \\
      &=  \mathbf{  d}_{B}^ { A }d\mathbf{z}_{B'}^ {B} \left[(\mathbf{ a}+\mathbf{  b}\mathbf{z})^{-1}\right]_{A'}^ {B'}     - (\mathbf{ c}+\mathbf{ d}\mathbf{z}
      )_{E'}^ {A}\left[(\mathbf{  a}+\mathbf{  b}\mathbf{z})^{-1}\right]_{G'}^ {E'}\mathbf{  b}_{B}^ {G'}d\mathbf{z}_{B'}^
      {B}[(\mathbf{  a}+\mathbf{  b}\mathbf{z})^{-1}]_{A'}^ {B'}
      \\
            &=\left [\mathbf{  d}     - (\mathbf{ c}+\mathbf{  d}\mathbf{z} ) (\mathbf{  a}+\mathbf{  b}\mathbf{z})^{-1} \mathbf{  b}  \right]_{ B}^{ A } d\mathbf{z}_{B'}^{
            B}
           [ (\mathbf{  a}+\mathbf{  b}\mathbf{z})^{-1}]_{A'}^{B'}\\
           &=\left [ J_2 (g^{-1} ,\mathbf{z} )  \right]_{ B}^{ A } d\mathbf{z}_{B'}^{
            B}
           [  J_1 (g^{-1} ,\mathbf{z} )^{-1}]_{A'}^{B'}.
   \end{split} \end{equation}

 Consider the subgroup of one parameter: $g_t^{-1}=\exp(  tX)=I+tX+O(t^2)$ for $X=\left(  \begin{array}{cc}0& \mathbf{\hat b}\\
 0& 0
   \end{array}\right)\in  \mathfrak{ sl}(2n+2,\mathbb{C})$. Differentiate \eqref{eq:g-dz} for  $g_t^{-1}  $
  to get
\begin{equation}
  X.d \mathbf{z}_{ A'}^{A} =-\left (\mathbf z\mathbf{\hat b} \right)_{ B}^{ A} d\mathbf{z}_{A'}^{B}- d\mathbf{z}_{ B'}^{A}\left ( \mathbf{\hat b}\mathbf{z}\right)_{ A'}^{B'}   .
\end{equation}
Then it is direct to see that
\begin{equation*} \begin{split}
 X. \operatorname{Vol}_\mathbb{C} = &- \sum_{A=0}^{2n-1}\left\{ d\mathbf{z}_{0'}^{ 0}\wedge\cdots \wedge  \left[ \left(\mathbf z\mathbf{\hat b}\right )_{ B}^{ A } d\mathbf{z}_{ 0' }^{B}
 +
 d\mathbf{z}_{B'}^{A} \left( \mathbf{\hat b}\mathbf{z}\right)_{0'}^{B'} \right] \wedge\cdots \wedge
 d\mathbf{z}_{0'}^{ 2n-1 }\wedge\bigwedge_{A=0}^{2n-1} d\mathbf{z}_{ 1'}^{A}\right.\\
&-\bigwedge_{A=0}^{2n-1} d\mathbf{z}_{ 0'}^{A}\wedge\sum_{A=0}^{2n-1} d\mathbf{z}_{1'}^{0 }\wedge\cdots \wedge \left. \left[ \left(\mathbf z\mathbf{\hat b} \right)_{B}^{A}
d\mathbf{z}_{1'}^{B } +
 d\mathbf{z}_{B'}^{A }\left ( \mathbf{\hat b}\mathbf{z}\right)_{ 1'}^{B'} \right]
\wedge\cdots \wedge   d\mathbf{z}_{1'}^{ 2n-1 }\right \}\\
=& -\left(2\operatorname{tr}\left(\mathbf z\mathbf{\hat b}\right)  +2n\operatorname{tr}\left( \mathbf{\hat b}\mathbf{z}\right) \right)  \operatorname{Vol}_\mathbb{C}
\\
=& -(2n+2)\operatorname{tr}\left( \mathbf{\hat b}\mathbf{z}\right)   \operatorname{Vol}_\mathbb{C},
 \end{split} \end{equation*}by $\operatorname{tr}( \mathbf{\hat b}\mathbf{z})   = \mathbf{\hat b}_{A}^{ A'}\mathbf{z}_{A'}^{ A}  =\operatorname{tr}(\mathbf{z} \mathbf{\hat b})
 $.

    For $X= \left(  \begin{array}{cc} \mathbf{\hat a} & 0\\
 0& \mathbf{\hat d}
   \end{array}\right)$ with $\operatorname{tr}(  \mathbf {\hat a}) +\operatorname{tr}( \mathbf {\hat d}) =0$, differentiate \eqref{eq:g-dz} for $g_t^{-1}=\exp( tX)$
  to get
$
  X.d \mathbf{z}^{A}_{ A'} =  \mathbf{\hat d}  _{ B}^{ A} d\mathbf{z}^{B}_{ A'}-  d\mathbf{z}^{A}_{ B'}  \mathbf{\hat a} ^{B'}_{  A'}   .
$
Similarly, we get
\begin{equation*} \begin{split}
  X. \operatorname{Vol}_\mathbb{C} = &  (2\operatorname{tr}(\mathbf {\hat d})  -2n\operatorname{tr}( \mathbf {\hat a}) ) \operatorname{Vol}_{\mathbb{C}}
 = -(2n+2)\operatorname{tr}( \mathbf{\hat a} )  \operatorname{Vol}_\mathbb{C}.
 \end{split} \end{equation*} While  for $X= \left(  \begin{array}{cc} 0 & 0\\
\mathbf{\hat c}&0
   \end{array}\right)$,  $X.d \mathbf{z}_{A}^{A'}   =0$ and so $X. \operatorname{Vol}_\mathbb{C} =0$. The transformation formula \eqref{eq:g-Volc} is proved.

 When  pulled  back  to $\mathbb{R}^{4n }$  by  $\tau $,
 \begin{equation}\label{eq:q-dz} \tau^* \left( d\mathbf{z}_{A'}^{ A}
                           \right):=\left(
                                      \begin{array}{rr}
                                                                               \vdots\qquad &\vdots\qquad \\
                                         dx_{4l}+\textbf{i}dx_{4l+1} & -dx_{4l+2}-\textbf{i}dx_{4l+3} \\
                                            dx_{4l+2}-\textbf{i}dx_{4l+3} & d x_{4l }-\textbf{i}dx_{4l+1} \\
                                          \vdots\qquad &\vdots\qquad
                                      \end{array}
                                    \right).
\end{equation} 
  Thus
\begin{equation*}
  \tau^*\left(  d\mathbf{z}_{0'}^{ 2l }\wedge  d\mathbf{z}_{ 0'}^{2l+1}\wedge   d\mathbf{z}_{ 1'}^{2l} \wedge d\mathbf{z}_{ 1'}^{2l+1}\right)= 4 dx_{4l}\wedge
  dx_{4l+1}\wedge
  dx_{4l+2}\wedge
    dx_{4l+3} \end{equation*}
   and so
   $
    \tau^*   \operatorname{Vol}_{\mathbb{C}} = 4^n\operatorname{Vol},
$ where  $\operatorname{Vol}=dx_0\wedge\cdots
     dx_{4n-1}$. Since $T_{g^{-1}}=\tau^{-1}\circ\underline{T}_{\tau(g)^{-1}}\circ\tau$, we find that
     \begin{equation*} \begin{split}
    T_{g^{-1}}^{*}  \operatorname{Vol}&=   4 ^{-n} T_{g^{-1}}^{*}\tau^*  \operatorname{Vol}_{\mathbb{C}} =  (-4)^{-n} \tau^* \underline{T}_{\tau(g)^{-1}}^{*}
    \operatorname{Vol}_{\mathbb{C}} \\&
    = 4 ^{-n}\tau^* \left [\frac {1 }{\det(\tau(\mathbf{a})+\tau(\mathbf{b})\tau(\mathbf{q} ) )^{2n+2}} \operatorname{Vol}_\mathbb{C}\right] = \frac {1 }{| \mathbf{a}+\mathbf{bq}
    |^{4n+4}} \operatorname{Vol}
 \end{split} \end{equation*}by \eqref{eq:g-Volc}.
     The Proposition is proved.
\end{proof}

\begin{rem} \eqref{eq:g-dz} implies the
  well known  decomposition of
  the complexified cotangent  bundle   into a tensor product $
  \mathbb{C} T^*M\cong H \otimes E^*
 $ as $G_0$-modules.  It is Salamon's $EH$ formalism \cite{Sa82}.
\end{rem}

\begin{cor} \label{cor:E-Vol} On a   locally   projective flat manifold   $M $, $
     \wedge^{2n} {E}^* \cong \mathbb{R}[- 1]$ and $ \wedge^{4n} T^*M\cong\mathbb{R}[-2n-2].
 $
\end{cor}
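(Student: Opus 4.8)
The plan is to identify both line bundles by their transition cocycles and to match these against the prescribed powers of $\mathbb{R}[-1]$ in \eqref{eq:equivalence-relation2}. For $\wedge^{2n}E^{*}$: by construction $E^{*}$ is glued, over the overlap $U_{\alpha}\cap U_{\beta}$, by $J_{2}(g_{\alpha\beta}^{-1},\mathbf{q}_{\alpha})^{t}$ acting on $\mathbb{C}^{2n}$ through $\tau$, so $\wedge^{2n}E^{*}$ is glued by the scalar $\det\!\big(J_{2}(g_{\alpha\beta}^{-1},\mathbf{q}_{\alpha})^{t}\big)=\det\tau\!\big(J_{2}(g_{\alpha\beta}^{-1},\mathbf{q}_{\alpha})\big)$, the transpose not affecting the determinant. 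I would evaluate this determinant from the factorization \eqref{eq:triangular-decomposition}, which holds verbatim over $\mathbb{H}$ (it is pure matrix algebra, and $\tau$ is a ring homomorphism by Proposition \ref{prop:embed}; equivalently one may apply the Schur complement to $g^{-1}\left(\begin{smallmatrix}\mathbf{1}&0\\\mathbf{q}&\mathbf{1}\end{smallmatrix}\right)$). Taking complex determinants gives $\det\tau(g^{-1})=\det\tau\big(J_{1}(g^{-1},\mathbf{q})\big)\cdot\det\tau\big(J_{2}(g^{-1},\mathbf{q})\big)$; since $\tau$ maps $\mathrm{SL}(n+1,\mathbb{H})$ into $\mathrm{SL}(2n+2,\mathbb{C})$ the left-hand side is $1$, and since $J_{1}(g^{-1},\mathbf{q})=\mathbf{a}+\mathbf{b}\mathbf{q}\in\mathbb{H}$ the explicit form \eqref{2.301} of $\tau$ on a single quaternion $\lambda$ shows $\det\tau(\lambda)=|\lambda|^{2}$, hence $\det\tau\big(J_{1}(g^{-1},\mathbf{q})\big)=|\mathbf{a}+\mathbf{b}\mathbf{q}|^{2}$.

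Consequently $\det\tau\big(J_{2}(g^{-1},\mathbf{q})\big)=|\mathbf{a}+\mathbf{b}\mathbf{q}|^{-2}$, which is exactly the transition cocycle of $\mathbb{R}[-1]$ recorded in \eqref{eq:equivalence-relation2}. Since all these transition functions are real and positive, $\wedge^{2n}E^{*}$ is the complexification of $\mathbb{R}[-1]$, and we obtain $\wedge^{2n}E^{*}\cong\mathbb{R}[-1]$. For the second isomorphism I would invoke Proposition \ref{prop:g-Vol}: with respect to the atlas $\{(U_{\alpha},\phi_{\alpha})\}$ the bundle $\wedge^{4n}T^{*}M$ is trivialized over each $U_{\alpha}$ by $\phi_{\alpha}^{*}\operatorname{Vol}$, and its transition functions are the Jacobian factors of the fractional linear coordinate changes, which by \eqref{eq:g-Vol} equal $|\mathbf{a}+\mathbf{b}\mathbf{q}_{\alpha}|^{-(4n+4)}=\big(|\mathbf{a}+\mathbf{b}\mathbf{q}_{\alpha}|^{-2}\big)^{2n+2}$. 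Thus the transition cocycle of $\wedge^{4n}T^{*}M$ is the $(2n+2)$-th power of that of $\mathbb{R}[-1]$, and since $\mathbb{R}[-2n-2]=\otimes^{2n+2}\mathbb{R}[-1]$ by definition, $\wedge^{4n}T^{*}M\cong\mathbb{R}[-2n-2]$. Alternatively one may deduce this from the first part together with Salamon's $EH$ decomposition $\mathbb{C}T^{*}M\cong E^{*}\otimes H^{*}$ (the Remark above) and the analogous computation $\wedge^{2}H^{*}\cong\mathbb{C}[-1]$ (glue by $\det\tau(J_{1}^{-1})=|\mathbf{a}+\mathbf{b}\mathbf{q}|^{-2}$), via the identity $\wedge^{4n}(E^{*}\otimes H^{*})\cong(\wedge^{2n}E^{*})^{\otimes 2}\otimes(\wedge^{2}H^{*})^{\otimes 2n}$.

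There is no genuine obstacle here: the argument is a short transition-cocycle comparison, and the only computation of substance is the identity $\det\tau\big(J_{2}(g^{-1},\mathbf{q})\big)=|\mathbf{a}+\mathbf{b}\mathbf{q}|^{-2}$, which is forced by $\det\tau(g^{-1})=1$ together with $\det\tau(J_{1})=|\mathbf{a}+\mathbf{b}\mathbf{q}|^{2}$. The only point that requires a little care is keeping the index conventions for $g_{\alpha\beta}$ versus $g_{\alpha\beta}^{-1}$ consistent across \eqref{eq:equivalence-relation2}, Proposition \ref{prop:cocycle} and Proposition \ref{prop:g-Vol}, which is handled by the cocycle identity \eqref{eq:pi-q}.
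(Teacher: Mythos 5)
Your proposal is correct and takes essentially the same route as the paper: the first isomorphism comes from $\det\tau(g^{-1})=1$ together with the triangular decomposition \eqref{eq:triangular-decomposition}, giving $\det\tau\bigl(J_{2}(g^{-1},\mathbf{q})\bigr)=\det\tau\bigl(J_{1}(g^{-1},\mathbf{q})\bigr)^{-1}=|\mathbf{a}+\mathbf{b}\mathbf{q}|^{-2}$, which is the cocycle of $\mathbb{R}[-1]$, and the second follows from Proposition \ref{prop:g-Vol}. Your added remarks (that $\det\tau(\lambda)=|\lambda|^{2}$ for a quaternion $\lambda$, the positivity of the transition functions, and the alternative via the $EH$ decomposition) are consistent elaborations of the same argument.
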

\begin{proof}  It is direct to check that
\begin{equation*}
   \tau^* \left( d\mathbf{z}_{A'}^{ A}
                           \right)\left(\nabla_ B^{B'}\right)=\delta_{A}^{ B}\delta_{B'}^{A'}.
\end{equation*}
For $
   g^{-1}=\left(  \begin{array}{cc}\mathbf{a}&\mathbf{b}\\
\mathbf{c}&\mathbf{d}
   \end{array}\right)\in {\rm SL}(n+1,\mathbb{H})$,  $
 \tau(  g^{-1}) \in  {\rm SL}(2n+2,\mathbb{C}) $. So the identity \eqref{eq:triangular-decomposition} implies that
   \begin{equation}\label{eq:q-det}
 \det\left [\tau(\mathbf{d})   - \tau(\mathbf{c}+\mathbf{d}\mathbf{q})\tau (\mathbf{a}+\mathbf{b}\mathbf{q})^{-1}\tau( \mathbf{b})  \right]=   \det\left
 [\tau(\mathbf{a}+\mathbf{bq})   \right]^{-1}=  \frac {1 }{|\mathbf{a}+\mathbf{bq}|^{2}}
 .
\end{equation}The first  isomorphism holds by definition
\eqref{eq:equivalence-relation-1} of  $\mathbb{R}[- 1]$.
The second one follows from    Proposition \ref{prop:g-Vol}.
\end{proof}

On a   locally   projective flat manifold   $M $, denote by $\mathscr{D} (M,\wedge^{p}E^* [-p ])$
the space $
 C_0^\infty(M,\wedge^{p}E^* [-p  ])$,
    elements of which   are often called   {\it
$p$-forms}. An element $\eta\in \mathscr{D} (M, {\rm SP}^{2p}E^*\otimes \mathbb{R}_+[- l] )   $ is called a  {\it strongly positive $2p$-form}, while $\psi\in
\mathscr{D}(\Omega,\wedge^{p}E^*  [ -p ])$ is called \emph{closed} if
$
  \widehat{ \mathcal  {D}}  \psi=0
$ where
\begin{equation*}
   \widehat{ \mathcal  {D}} =s^{ [A'} d^{B']} : \Gamma \left(\wedge^{p} {E}^* [-p]\right) \rightarrow \Gamma \left({H}^*\otimes
\wedge^{p+1} {E}^* [-p-1]\right)
\end{equation*}
  given by \eqref{eq:widetilde-D} is an invariant operator, which  is the $(p-1)$-th operator in the $(p-2)$-Cauchy-Fueter
complex (here we assume $p\geq 2$ for simplicity). It is equivalent to
\begin{equation*}
   d^{0'}\psi =  d^{1'} \psi=0
\end{equation*}
locally. Note that $d^{A'}$ is not  an invariant operator, but $s^{ [A'} d^{B']} $ is.

As in the flat case, we can  define ``integral" for a
  $2n$-form.  Assuming $M$ is orientable,  there exists a global nonvanishing section of $\wedge^{4n} T^*M$, the volume form, say $dV$. By Corollary \ref{cor:E-Vol}, we have
  \begin{equation*}
     \wedge^{2n} {E}^* [ -2n-1] \cong \mathbb{R}[ -2n-2] \cong  \wedge^{4n} T^*M.
  \end{equation*}
   Thus for a section $\omega\in\mathscr{D} (M,  \wedge^{2n} {E}^* [ -2n-1]  )$,
  there exists a function $f$ on $M$ such that
$
     \omega\cong f dV,
$
  and so the functional on $\mathscr{D} (M, \wedge^{2n} {E}^* [ -2n-1])$ defined by
  \begin{equation}\label{eq:integral}\int_M \omega:=\int_M f dV,
\end{equation}
is well defined.
\begin{cor} For $u_0, \cdots, u_n\in \Gamma(M, \mathbb{R}[-1])$,
$
   u_0\triangle u_1\wedge\cdots\wedge\triangle u_n\in \Gamma(M, \wedge^{2n}
{E}^* [- 2n-1])
$
and
\begin{equation*}
 \int_K  u_0\triangle u_1\wedge\cdots\wedge\triangle u_n
\end{equation*}
for a compact subset $K$ is well defined.
\end{cor}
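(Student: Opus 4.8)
The plan is to treat the two assertions separately: the membership assertion is essentially bookkeeping with exterior powers of $E^*$ and density weights, while the well-definedness of the integral is a direct consequence of Corollary~\ref{cor:E-Vol} and the integration functional \eqref{eq:integral}.

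\emph{Step 1: membership.} For $k=0$ the zeroth operator of the $k$-Cauchy-Fueter complex on $M$ is the Baston operator $\triangle=\mathcal D_0\colon\Gamma(M,\mathbb R[-1])\to\Gamma(M,\wedge^2E^*[-2])$, which is a globally well-defined operator between these bundles; this is exactly what was established in Section~4 from the $\mathrm{SL}(n+1,\mathbb H)$-invariance of Theorem~\ref{thm:k-invariant} together with the cocycle property of $J_1,J_2$ (Proposition~\ref{prop:cocycle}) used to glue $H^*,E^*$ and $\mathbb R[-1]$. Hence each $\triangle u_j$ is a genuine section of $\wedge^2E^*[-2]$. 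Since a local representative of $\triangle u_j$ is a Grassmann polynomial of even degree $2$ in the variables $\omega^A$, these representatives commute under the wedge product, so $\triangle u_1\wedge\cdots\wedge\triangle u_n$ is defined without sign ambiguity. It is a global section of $\wedge^{2n}E^*[-2n]$ because the transition cocycle $J_2^t$ acts on $\wedge^\bullet E^*$ by the induced exterior-algebra action, which is an algebra homomorphism, while the density cocycles simply multiply; concretely, if $(\mathbf q_\alpha,v^{(j)}_\alpha)\sim(\mathbf q_\beta,v^{(j)}_\beta)$ for $j=1,\dots,n$ in the sense of \eqref{eq:equivalence-relation}, then $(\mathbf q_\alpha,v^{(1)}_\alpha\wedge\cdots\wedge v^{(n)}_\alpha)\sim(\mathbf q_\beta,v^{(1)}_\beta\wedge\cdots\wedge v^{(n)}_\beta)$, and likewise for the weights. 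Multiplying by the density $u_0\in\Gamma(M,\mathbb R[-1])$ then gives a section of $\wedge^{2n}E^*\otimes\mathbb R[-2n]\otimes\mathbb R[-1]=\wedge^{2n}E^*[-2n-1]$, which is the first assertion.

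\emph{Step 2: the integral.} By Corollary~\ref{cor:E-Vol} one has $\wedge^{2n}E^*\cong\mathbb R[-1]$ and $\wedge^{4n}T^*M\cong\mathbb R[-2n-2]$, hence the canonical isomorphism
\[
\wedge^{2n}E^*[-2n-1]\cong\mathbb R[-1]\otimes\mathbb R[-2n-1]=\mathbb R[-2n-2]\cong\wedge^{4n}T^*M .
\]
Using orientability of $M$, fix a nowhere-vanishing $dV\in\Gamma(M,\wedge^{4n}T^*M)$. Then there is a unique $f\in C^\infty(M)$ with $u_0\,\triangle u_1\wedge\cdots\wedge\triangle u_n\cong f\,dV$ under this isomorphism, and we set $\int_K u_0\,\triangle u_1\wedge\cdots\wedge\triangle u_n:=\int_K f\,dV$ exactly as in \eqref{eq:integral}. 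This quantity is finite, since $f$ is continuous and $K$ is compact and so of finite volume with respect to $dV$; and it is independent of the choice of $dV$, because replacing $dV$ by $h\,dV$ with $h$ nowhere vanishing replaces $f$ by $f/h$, leaving $\int_K f\,dV$ unchanged.

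\emph{Main point.} The only substantive ingredient is that each $\triangle u_j$ is a well-defined global section of $\wedge^2E^*[-2]$ and not merely a collection of incompatible local expressions, i.e.\ that $\mathcal D_0$ descends to the bundles over $M$; this is precisely the quaternionic projective invariance of Theorem~\ref{thm:k-invariant}. Granting that, together with the bundle identifications of Corollary~\ref{cor:E-Vol}, the corollary is a formal consequence, the remaining checks (compatibility of the wedge product with the gluing cocycle $J_2^t$, and convergence of the integral over a compact set) being routine.
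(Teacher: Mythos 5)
Your proposal is correct and follows essentially the same route as the paper: the paper's own proof is the one-line observation that $\triangle u_j\in \Gamma(M,\wedge^{2}E^*[-2])$ forces $\triangle u_1\wedge\cdots\wedge\triangle u_n\in \Gamma(M,\wedge^{2n}E^*[-2n])$ "by definition," with the integral then handled by the identification $\wedge^{2n}E^*[-2n-1]\cong\wedge^{4n}T^*M$ of Corollary~\ref{cor:E-Vol} and the definition \eqref{eq:integral}. You have merely made explicit the cocycle compatibility of the wedge product and the independence of the choice of $dV$, which the paper leaves implicit.
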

\begin{proof}  Since $\triangle u_j\in \Gamma(M, \wedge^{2 }
{E}^* [- 2 ])$, we have $\triangle u_1\wedge\cdots\wedge\triangle u_n\in \Gamma(M, \wedge^{2n}
{E}^* [- 2n ])$ by definition.
\end{proof}

 An element of the space
$[\mathscr{D}  (M,\wedge^{2n-p}E^* [-(2n-p)-1])]'$ dual  to $\mathscr{D}  (M,\wedge^{2n-p}E^* [-(2n-p)-1])$  is called a {\it $p$-current}.
$\psi\in \mathscr{D}  (M,\wedge^{p}E^*  [ -p ])$ defines a $p$-current by
\begin{equation}\label{eq: T-psi}
  T_\psi (\eta)=\int_M\psi \wedge \eta
\end{equation}for any $\eta\in\mathscr{D}  (M,\wedge^{2n- p}E^* [-(2n-p)-1])$,
since $\psi \wedge \eta \in \mathscr{D} (M, \wedge^{2n} {E}^* [ -2n-1])$.  A $p$-current $T$ is called \emph{closed} if
\begin{equation*}
     ( \widehat{  \mathcal  {D}} T)(\eta):= T( \mathcal  {D} \eta)=0
\end{equation*}for any  $\eta\in \mathscr{D}  (M,H \otimes\wedge^{2n-p-1 }E^* [-(2n-p) ])$, where $\mathcal  {D} \eta\in \mathscr{D}  (M, \wedge^{2n-p
}E ^*[-(2n-p)-1 ])$ by definition. It is direct to check that
$
    \widehat{  \mathcal  {D}} T_\psi= (-1)^{p-1}T_{ \widehat{  \mathcal  {D}}\psi}
$
with the natural extension of \eqref{eq: T-psi} to the dual pair between $  H \otimes\wedge^{2n-p -1}E^* [-(2n-p)] $ and $  H^*\otimes\wedge^{ p+1 }E^* [-  p-1 ] $. We
omit details.

 A $2p$-current $T$ is said to be \emph{positive} if we have
$T(\eta)\geq0$ for any strongly positive form $\eta\in \mathscr{D}  (M, \operatorname{SP}^{2n-2p}E^*\otimes \mathbb{R}_+ [-(2n-2p)-1])$.
A    upper
semicontinuous  section of $  {{\mathbb{R}}}[-1]$
   is said to be \emph{plurisubharmonic}  if    $\triangle u$ is
a closed positive $2$-current. The space of
plurisubharmonic  section on $M$ is denoted by $ {\rm PSH} (M)$. For $u\in {\rm PSH} (M)\cap C^2(M,  \mathbb{R}  [-1])$,  $\triangle u$ is a closed   strongly
positive
$2$-form.

Now we fix a metric $h$ for the line bundle $\mathbb{R}[-1]$.
 \begin{thm}\label{thm:estimate-C0} On a  locally   projective flat manifold $M$,
for any  $u_0,\ldots u_n\in {\rm PSH} (M)\cap C^2(M,  \mathbb{R}_- [-1])$, we have
\begin{equation}\label{3.14}0\leq \int_M - u_0\triangle u_1\wedge\cdots\wedge\triangle u_n \leq
C\prod_{i=0}^n\|u_i\|_{L^\infty},
\end{equation}where $L^\infty$-norms are defined in terms of the metric $h$. \end{thm}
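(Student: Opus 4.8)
The plan is to reduce the global estimate on $M$ to the local estimate on domains in $\mathbb{H}^n$, which was established in the flat case in \cite{wan-wang,wang21}, and then to patch the local estimates together using a partition of unity subordinate to the coordinate cover $\{U_\alpha\}$. Concretely, I would first fix a finite cover of the compact support in question by coordinate charts $\phi_\alpha : U_\alpha \to \mathbb{H}^n$, and a smooth partition of unity $\{\chi_\alpha\}$ with $\sum_\alpha \chi_\alpha \equiv 1$ near $\operatorname{supp}$ of the integrand. On each chart, the sections $u_i$ of $\mathbb{R}_-[-1]$ are represented by genuine real-valued negative functions $u_i^{(\alpha)}$ on $\phi_\alpha(U_\alpha)$, and by the definition of the $\triangle$ operator on $M$ together with the transformation law \eqref{eq:equivalence-relation2}, the $2n$-form $-u_0\triangle u_1\wedge\cdots\wedge\triangle u_n$, viewed through the isomorphism $\wedge^{2n}E^*[-2n-1]\cong \wedge^{4n}T^*M$ of Corollary \ref{cor:E-Vol}, is locally $-u_0^{(\alpha)}\,\triangle u_1^{(\alpha)}\wedge\cdots\wedge\triangle u_n^{(\alpha)}$, which is exactly the flat quaternionic Monge--Amp\`ere type expression already understood on $\phi_\alpha(U_\alpha)$.

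The positivity $0\le \int_M -u_0\triangle u_1\wedge\cdots\wedge\triangle u_n$ then follows immediately: since each $u_i$ is plurisubharmonic, $\triangle u_1\wedge\cdots\wedge\triangle u_n$ is a closed strongly positive $2n$-form (by the remark after Theorem \ref{thm:estimate-C0} on $C^2$ plurisubharmonic sections, each $\triangle u_i$ is closed strongly positive, and the wedge of strongly positive forms is positive by definition of $\operatorname{SP}^{2p}E^*$), and $-u_0\ge 0$ as a section of $\mathbb{R}_+[-1]$; hence the integrand, written as $f\,dV$ via \eqref{eq:integral}, has $f\ge 0$ pointwise, so the integral is nonnegative. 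For the upper bound, I would write $\int_M -u_0\triangle u_1\wedge\cdots\wedge\triangle u_n = \sum_\alpha \int_M -\chi_\alpha\, u_0\triangle u_1\wedge\cdots\wedge\triangle u_n$ and bound each summand by the flat-space estimate applied on $\phi_\alpha(U_\alpha)$ (more precisely on a slightly smaller compactly contained subdomain covering $\operatorname{supp}\chi_\alpha$), giving $\int \le C_\alpha \|u_0\|_{L^\infty(U_\alpha)}\prod_{i=1}^n \|u_i\|_{C^{?}(U_\alpha)}$; then one upgrades the $C^2$-type norms of $u_i$ on the $U_\alpha$ to $L^\infty$-norms by the standard integration-by-parts argument (valid because $\triangle u_1\wedge\cdots\wedge\triangle u_n$ is closed, so one can move derivatives off the inner factors onto the test-function/cutoff, exactly as in the complex pluripotential theory of Bedford--Taylor and its quaternionic analogue in \cite{wan-wang,wang21}), at the cost of absorbing derivatives of $\chi_\alpha$ into the constant $C$. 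Summing over the finitely many $\alpha$ produces the claimed bound with a constant $C$ depending only on $M$, the metric $h$, the chosen cover and partition of unity.

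The main obstacle is making the integration-by-parts / norm-reduction step globally coherent: the operators $d_{0'},d_{1'}$ are \emph{not} individually invariant (only $\triangle=d_{0'}d_{1'}$ and $S^{A'}d_{A'}$ are), so one cannot naively integrate by parts chart-by-chart and expect the boundary and transition terms to cancel. The clean way around this is to do all the integration by parts \emph{within a single chart} against the cutoff $\chi_\alpha$ (which is compactly supported inside $U_\alpha$), so that no transition terms ever appear and one is genuinely only invoking the already-proven flat estimate; the price is merely that $C$ depends on $\sup|\nabla^2\chi_\alpha|$ and on the sup of the density relating $dV$ to the Euclidean volume on each chart, all of which are finite and fixed once $M$, $h$, $\{U_\alpha\}$, $\{\chi_\alpha\}$ are fixed. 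A secondary technical point is the measurability/approximation of upper semicontinuous plurisubharmonic sections by smooth ones so that the $C^2$ hypothesis in the statement is actually used as stated; since the theorem already restricts to $u_i\in {\rm PSH}(M)\cap C^2(M,\mathbb{R}_-[-1])$, this is not needed here, and the proof above applies directly.
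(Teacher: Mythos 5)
Your proposal is correct and follows essentially the same route as the paper, which disposes of this theorem in one sentence by reducing it to the known Chern--Levine--Nirenberg estimate on flat $\mathbb{H}^n$ (from \cite{alesker1,wan-wang}) via a partition of unity subordinate to the coordinate cover. Your write-up simply makes explicit the localization, the positivity of the integrand, and the (correct) observation that all integration by parts happens inside a single chart against a compactly supported cutoff, so the non-invariance of the individual $d_{A'}$ causes no trouble.
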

Its proof is reduced to known Chern-Levine-Nirenberg  estimate \cite{alesker1,wan-wang}  on flat quaternionic  space $\mathbb{H}^n$ by the unit partition. As a
corollary, for $u_j\in  {\rm
PSH} (M)\cap C(M,  \mathbb{R}_- [-1])$,
\begin{equation*}
 -  u_0\triangle u_1\wedge\cdots\wedge\triangle u_n
\end{equation*}defines a measure on $M$.
The capacity of a compact subset $K$ of $M$ can be defined as
\begin{equation*}
  {\rm cap} (K,M):=\sup\left\{-\int_K  u_0\triangle u_1\wedge\cdots\wedge\triangle u_n;u_j\in {\rm PSH} (M)\cap C(M,  \mathbb{R}_- [-1]) ,  |u_j|_h\leq 1\right
  \}.
\end{equation*}The
 Monge-Amp\`{e}re equation and
pluripotential theory  on  a  locally   projective flat manifold will be discussed in the subsequent part.
 \section{Projective  invariance    }
 Write
$
   (\triangle u)^n=M(u)\Omega_{2n}
$
locally,  where
$ \Omega_{2n}:=\omega^0\wedge
\omega^1\wedge\ldots\wedge\omega^{2n-2}\wedge
\omega^{2n-1}.$

\begin{prop}\label{prop:projectively-invariant-operator}
  $
     \frac { M( u)}{u^{2n+1} }
$
is   quaternionic projectively invariant   for $u\in \Gamma(M, \mathbb{R}[-1])$.
\end{prop}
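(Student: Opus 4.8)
The plan is to track how both the numerator $M(u)$ and the denominator $u^{2n+1}$ transform under a projective change of chart $g\in\mathrm{SL}(n+1,\mathbb{H})$, and to show the transformation factors match. Recall that a section $u\in\Gamma(M,\mathbb{R}[-1])$ is a family of local representatives $u_\alpha$ related by \eqref{eq:equivalence-relation2}, so that $u_\alpha(\mathbf{q}_\alpha)=|J_1(g_{\alpha\beta}^{-1},\mathbf{q}_\alpha)^{-1}|^2\,u_\beta(\mathbf{q}_\beta)=|\mathbf{a}+\mathbf{b}\mathbf{q}_\alpha|^{-2}\,u_\beta(\mathbf{q}_\beta)$. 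Hence $u^{2n+1}$ transforms with the factor $|\mathbf{a}+\mathbf{b}\mathbf{q}|^{-2(2n+1)}=|\mathbf{a}+\mathbf{b}\mathbf{q}|^{-4n-2}$. So the whole task reduces to showing that $M(u)$ — defined by writing $(\triangle u)^n=M(u)\,\Omega_{2n}$ in a local frame — picks up exactly the same factor $|\mathbf{a}+\mathbf{b}\mathbf{q}|^{-4n-2}$ under a projective change of coordinates.

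First I would use Theorem \ref{thm:k-invariant} (or rather its $k=0$ instance, the invariance of the Baston operator $\triangle$) to see that $\triangle u$, as a section of $\wedge^2 E^*[-2]$, transforms by the cocycle: its local representatives satisfy $(\triangle u)_\alpha=\pi_1(g_{\alpha\beta})(\triangle u)_\beta$, i.e. $(\triangle u)_\alpha(\mathbf{q}_\alpha)=|\mathbf{a}+\mathbf{b}\mathbf{q}_\alpha|^{-4}\,J_2(g_{\alpha\beta}^{-1},\mathbf{q}_\alpha)^t.(\triangle u)_\beta(\mathbf{q}_\beta)$, reading off the exponent $2(j+1)=4$ for $j=1$ from \eqref{eq:Uj<} and the $[-j]=[-1]$ twist. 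Taking the $n$-fold wedge power, $(\triangle u)^n$ is a section of $\wedge^{2n}E^*[-2n]$ (twisted appropriately), and it transforms by $|\mathbf{a}+\mathbf{b}\mathbf{q}|^{-4n}$ times the action of $\wedge^{2n}J_2^t$ on $\Omega_{2n}$. Now the crucial point: by \eqref{eq:q-det} in the proof of Corollary \ref{cor:E-Vol}, $\det J_2(g^{-1},\mathbf{q})=\det[\tau(\mathbf{d})-\tau(\mathbf{c}+\mathbf{d}\mathbf{q})\tau(\mathbf{a}+\mathbf{b}\mathbf{q})^{-1}\tau(\mathbf{b})]=|\mathbf{a}+\mathbf{b}\mathbf{q}|^{-2}$, so $\wedge^{2n}J_2^t$ multiplies $\Omega_{2n}$ by $|\mathbf{a}+\mathbf{b}\mathbf{q}|^{-2}$. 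Combining, the scalar coefficient $M(u)$ transforms by $|\mathbf{a}+\mathbf{b}\mathbf{q}|^{-4n-2}$, exactly matching the factor for $u^{2n+1}$, so $M(u)/u^{2n+1}$ is a well-defined (chart-independent) density, i.e. projectively invariant. (One should also note this scalar lands in the correct density bundle $\mathbb{R}[-2n-2]$ by Corollary \ref{cor:E-Vol}, so the quotient is a genuine function on $M$.)

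I expect the main obstacle to be bookkeeping of the line-bundle twists and of where the various $|\mathbf{a}+\mathbf{b}\mathbf{q}|$-powers originate: one must be careful that $\Omega_{2n}=\omega^0\wedge\cdots\wedge\omega^{2n-1}$ is the frame element of $\wedge^{2n}\mathbb{C}^{2n}$ on which $\wedge^{2n}\mathbf{d}.\,\cdot$ acts by $\det\mathbf{d}$, and that the $[-j]$-twists in the definition of $\mathscr{V}_j$ contribute their own $|\mathbf{a}+\mathbf{b}\mathbf{q}|$-powers, which are precisely what is needed for $\triangle:\Gamma(M,\mathbb{R}[-1])\to\Gamma(M,\wedge^2 E^*[-2])$ to be well defined in the first place (this is already established in Section~4). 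Once the twist conventions are fixed consistently, the computation of exponents is the short verification $2n\cdot 2 + 2 = 4n+2 = 2(2n+1)$, and everything else is an application of results already proved, namely the invariance of $\triangle$ (Theorem~\ref{thm:k-invariant}, case $k=0$), the cocycle property \eqref{eq:pi-q}, and the determinant identity \eqref{eq:q-det}.
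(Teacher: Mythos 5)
Your proposal is correct and follows essentially the same route as the paper: apply the $k=0$ case of Theorem \ref{thm:k-invariant} to get the transformation law of $\triangle u$ with factor $|\mathbf{a}+\mathbf{b}\mathbf{q}|^{-4}J_2^t$, take the $n$-th wedge power, use the determinant identity \eqref{eq:q-det} to convert $\wedge^{2n}J_2^t$ acting on $\Omega_{2n}$ into the scalar $|\mathbf{a}+\mathbf{b}\mathbf{q}|^{-2}$, and match the resulting exponent $4n+2$ against that of $u^{2n+1}$. This is exactly the paper's argument.
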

\begin{proof}
Note that for  $u\in \Gamma( M, \mathbb{R}[-1])$, we have  locally
 \begin{equation*}
  \triangle\left( \frac {1 }{|\mathbf{a}+\mathbf{bq}|^{2}} u(g^{-1}.\mathbf{q})\right)= \frac {1 }{|\mathbf{a}+\mathbf{bq}|^{4 }} J_2(g^{-1},\mathbf{q}) . \triangle
  u  (g^{-1}.\mathbf{q}) ,
\end{equation*} by the ${\rm SL}(n+1,\mathbb{H})$-invariance of $\mathcal  {D}_0 $ in the $0$-Cauchy-Fueter complex  in Theorem \ref{thm:k-invariant}. Thus,
\begin{equation}\label{eq:triangle-transform}
  \left(\triangle\left( \frac {1 }{|\mathbf{a}+\mathbf{bq}|^{2}} u(g^{-1}.\mathbf{q})\right)\right)^n= \frac {1 }{|\mathbf{a}+\mathbf{bq}|^{4n }}
  J_2(g^{-1},\mathbf{q}) . \left(\triangle  u  (g^{-1}.\mathbf{q})\right)^n .
\end{equation}
Consequently,
  we get
\begin{equation*}\begin{split}
 M\left( \frac {1 }{|\mathbf{a}+\mathbf{bq}|^{2}} u(g^{-1}.\mathbf{q})\right)\Omega_{2n}& = \frac {1 }{|\mathbf{a}+\mathbf{bq}|^{4n }} M( u) (g^{-1}.\mathbf{q})
 J_2(g^{-1},\mathbf{q}) .\Omega_{2n}\\&= \frac {1 }{|\mathbf{a}+\mathbf{bq}|^{4n+2 }} M( u) (g^{-1}.\mathbf{q})  \Omega_{2n},
\end{split} \end{equation*}by using \eqref{eq:q-det},
and so
\begin{equation*}
   M\left( \frac {1 }{|\mathbf{a}+\mathbf{bq}|^{2}} u(g^{-1}.\mathbf{q})\right)= \frac {1 }{|\mathbf{a}+\mathbf{bq}|^{4n+2}} M( u) (g^{-1}.\mathbf{q}) .
\end{equation*}
It implies $
     \frac { M( u)}{u^{2n+1} }
$
is a quaternionic projectively invariant   for $u\in \Gamma( M, \mathbb{R}[-1])$.\end{proof}

Now consider the   problem:
\begin{equation}\label{eq:M=1}\left\{\begin{aligned}&
     \frac { M( u)}{u^{2n+1} }=1,\\&
     u|_{b D}=\infty.
\end{aligned}\right.\end{equation} for $u\in C( {D},  \mathbb{R}_- [-1])$, where $D$ is a domain in $M$.
Letting $u=-\frac 1\varrho $, then $\varrho$ is a section of $\mathbb{ R}_+[1]$. Note that locally $\triangle=d^{0'} d^{1'}$  and
\begin{equation*}
   \triangle  \left (-\frac 1\varrho \right)= \frac {\triangle \varrho}{\varrho^2}-\frac {2d^{0'}\varrho\wedge d^{1'}\varrho}{\varrho^3}.
\end{equation*}
Then,
\begin{equation}\label{eq:J0}
  \left (\triangle   \left(-\frac 1\varrho\right)\right)^n =\frac {1}{\varrho^{2n+1}}\left[ \varrho(\triangle \varrho)^n-2n d^{0'}\varrho\wedge
  d^{1'}\varrho\wedge (\triangle \varrho)^{n-1}\right].
\end{equation}
If we define $J(\varrho)$ locally by
\begin{equation}\label{eq:J}
 -  J(\varrho)\Omega_{2n}:=  \varrho(\triangle \varrho)^n-2n d^{0'}\varrho\wedge d^{1'}\varrho\wedge (\triangle \varrho)^{n-1} ,
\end{equation}then
\eqref{eq:J0} implies
\begin{equation*}
    J(\varrho)\Omega_{2n}= \frac { M( u)}{u^{2n+1} }\Omega_{2n}.
\end{equation*}By Proposition \ref{prop:projectively-invariant-operator},
  $
       J(\varrho)
$
is   quaternionic projectively invariant, i.e. it is a well defined scalar function on $M$.
Therefore, the   problem
\eqref{eq:M=1} is equivalent to the Dirichlet problem
\begin{equation}\label{eq:Dirichlet}\left\{\begin{aligned}&
     J(\varrho) = 1,\\&
     \varrho|_{b D}=0.
\end{aligned}\right.\end{equation}

Fefferman \cite{Feff}  used the complex Monge-Amp\`{e}re operator to construct a holomorphically invariant defining density of a strictly pesudoconvex domain, and
CR invariant
differential operators  on the boundary.  We  construct a projectively invariant  defining density.
\begin{thm} \label{thm:Fefferman-density} For a domain $D$ in  a locally   projective flat manifold $M$, there exists a defining density $\varrho$, a section of $
\mathbb{ R}_+[1]$,  such that
\begin{equation*}
   J( \varrho)= 1+O(\varrho^{2n+2}).
\end{equation*}
Any smooth  local  approximate solution $\varrho\in C^\infty(\overline{D}, \mathbb{ R}_+[1])$ to this equation is
uniquely determined up to order $ 2n+2$.
\end{thm}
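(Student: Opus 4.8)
\noindent\emph{Proof strategy.}
The plan is to adapt Fefferman's iterative construction \cite{Feff} to the operator $J$ of \eqref{eq:J}, using the projective invariance of Proposition \ref{prop:projectively-invariant-operator}. Since the assertion is local near $\partial D$, I would work in a single projective chart $\phi_\alpha:U_\alpha\to\mathbb{H}^n$ meeting $\partial D$, where $\triangle=d_{0'}d_{1'}$ with $d_{A'}$ as in \eqref{eq:d-real}; by Proposition \ref{prop:projectively-invariant-operator} the function $J(\varrho)$ is chart independent, so it suffices to solve $J(\varrho)=1+O(\varrho^{2n+2})$ in a one-sided collar of $\partial D$ there. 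Start from an arbitrary defining density $\varrho_0$ for $D$. Restricting \eqref{eq:J} to $\partial D$ gives $J(\varrho_0)|_{\partial D}\,\Omega_{2n}=2n\,d_{0'}\varrho_0\wedge d_{1'}\varrho_0\wedge(\triangle\varrho_0)^{n-1}|_{\partial D}$, the ``boundary Monge--Amp\`ere'' of $\varrho_0$, which I assume to be positive on $\partial D$ (the quaternionic strict pseudoconvexity/nondegeneracy hypothesis implicit in the statement, exactly as in Fefferman's setting). Replacing $\varrho_0$ by $e^{\psi}\varrho_0$ and expanding $d_{A'}(e^{\psi}\varrho_0)$ and $\triangle(e^{\psi}\varrho_0)$ on $\partial D$, every term carrying a derivative of $\psi$ gets wedged against $d_{0'}\varrho_0\wedge d_{1'}\varrho_0$ and so acquires a repeated Grassmann factor and drops out (the same cancellation of repeated factors underlying the rescaling formula \eqref{eq:triangle-transform}); hence $J(e^{\psi}\varrho_0)|_{\partial D}=e^{(n+1)\psi}J(\varrho_0)|_{\partial D}$, and choosing $\psi|_{\partial D}$ appropriately we may assume $J(\varrho_0)=1+O(\varrho_0)$.

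The heart of the proof is one inductive step. Suppose $\varrho$ is a defining density with $J(\varrho)=1+\varrho^{s}\eta+O(\varrho^{s+1})$ for some $1\le s\le 2n+1$, $\eta$ smooth, and set $\hat\varrho=\varrho(1+\varrho^{s}\phi)=\varrho+\varrho^{s+1}\phi$; this perturbs $\varrho$ only at order $s+1$ (so earlier normalizations are untouched) and keeps $\hat\varrho$ a defining density on a possibly smaller collar. Expanding $d_{A'}\hat\varrho$ and $\triangle\hat\varrho$ — the latter has an $O(\varrho^{s-1})$ term $(s+1)s\,\varrho^{s-1}\phi\,d_{0'}\varrho\wedge d_{1'}\varrho$, which is what ultimately matters once multiplied by the undifferentiated factor $\hat\varrho$ in $-\hat\varrho(\triangle\hat\varrho)^{n}$ — substituting into $-\hat\varrho(\triangle\hat\varrho)^{n}+2n\,d_{0'}\hat\varrho\wedge d_{1'}\hat\varrho\wedge(\triangle\hat\varrho)^{n-1}=J(\hat\varrho)\Omega_{2n}$, collecting the coefficient of $\varrho^{s}$ using $\varrho|_{\partial D}=0$, $d_{0'}\varrho\wedge d_{1'}\varrho\wedge(\triangle\varrho)^{n-1}|_{\partial D}=\tfrac1{2n}J(\varrho)|_{\partial D}\Omega_{2n}$, and the vanishing of all wedges with a repeated $d_{A'}\varrho$ (so that the $d_{A'}\phi$ terms cancel), I expect to obtain, after the boundary normalization $J(\varrho)|_{\partial D}=1$,
\[
   J(\hat\varrho)=1+\varrho^{s}\Bigl(\eta+\tfrac{(s+1)(2n+2-s)}{2}\,\phi\Bigr)+O(\varrho^{s+1}).
\]
Since $(s+1)(2n+2-s)\neq0$ for $1\le s\le 2n+1$, choosing $\phi=-\tfrac{2\eta}{(s+1)(2n+2-s)}$ yields $J(\hat\varrho)=1+O(\varrho^{s+1})$. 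Iterating for $s=1,2,\dots,2n+1$ produces the required $\varrho$ with $J(\varrho)=1+O(\varrho^{2n+2})$; the factor $2n+2-s$ vanishing at $s=2n+2$ is precisely the obstruction that prevents improving further — the quaternionic counterpart of Fefferman's obstruction.

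For uniqueness, let $\varrho,\tilde\varrho$ be two defining densities for $D$ with $J=1+O(\varrho^{2n+2})$. Writing $\tilde\varrho=\varrho(1+\chi)$, the boundary identity $J(\tilde\varrho)|_{\partial D}=(1+\chi)^{n+1}|_{\partial D}\,J(\varrho)|_{\partial D}$ forces $\chi|_{\partial D}=0$, so $\chi=O(\varrho)$. If $\chi$ vanished to order exactly $s$ with $1\le s\le 2n+1$, say $\chi=\varrho^{s}\phi+O(\varrho^{s+1})$ with $\phi|_{\partial D}\not\equiv0$, then the displayed identity gives $J(\tilde\varrho)-J(\varrho)=\tfrac{(s+1)(2n+2-s)}{2}\,\varrho^{s}\phi+O(\varrho^{s+1})$, contradicting $J(\tilde\varrho)-J(\varrho)=O(\varrho^{2n+2})$. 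Hence $\chi=O(\varrho^{2n+2})$, i.e.\ $\varrho$ is uniquely determined to order $2n+2$, the residual freedom appearing exactly at the order where $c(s,n)=\tfrac{(s+1)(2n+2-s)}{2}$ vanishes.

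The main obstacle is the inductive identity itself: carrying out the Grassmann expansion of $(\triangle\hat\varrho)^{n}$ and $d_{0'}\hat\varrho\wedge d_{1'}\hat\varrho\wedge(\triangle\hat\varrho)^{n-1}$, tracking the binomial multiplicities, and verifying both that every $d_{A'}\phi$ contribution cancels (as it must, by projective invariance of $J$) and that the surviving scalar is the nonzero multiple $\tfrac{(s+1)(2n+2-s)}{2}$ of $J(\varrho)|_{\partial D}$. Unlike the complex case, where this reduces to a single $(n+1)\times(n+1)$ determinant identity, here $\triangle\varrho$ is a genuine $2$-form in $\wedge^{2}\mathbb{C}^{2n}$ and the products are honest exterior products, so isolating the contribution of the $O(\varrho^{s-1})$ piece of $\triangle\hat\varrho$ (and checking it is the only new one at order $\varrho^{s}$) is the delicate part; the remaining ingredients — boundary normalization, the nondegeneracy hypothesis, chart-independence via Proposition \ref{prop:projectively-invariant-operator}, and the finite iteration — are routine.
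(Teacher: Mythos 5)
Your proposal is correct and follows essentially the same route as the paper: normalize $J$ to equal $1$ on the boundary via the rescaling identity $J(\eta\varphi)|_{bD}=\eta^{n+1}J(\varphi)|_{bD}$, then iterate a one-step correction $\varrho\mapsto\varrho(1+\varrho^{s}\phi)$, and your predicted coefficient $\tfrac{(s+1)(2n+2-s)}{2}$ is exactly the paper's $\tfrac{s}{2}(2n+3-s)$ after the index shift $s\mapsto s-1$, with the same mechanism (the undifferentiated factor times the $O(\varrho^{s-1})$ piece of $\triangle\hat\varrho$ survives, all $d_{A'}\phi$ terms die against $d_{0'}\varrho\wedge d_{1'}\varrho$). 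You additionally make explicit the positivity of $J$ on the boundary needed to take the $(n+1)$-th root and supply the uniqueness argument, both of which the paper's proof leaves implicit.
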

\begin{proof} It is sufficient to show the result locally.
For  a defining function $\varphi$ of the domain $D=\{\varphi>0\}$ and ${\rm grad} \varphi\neq 0$ on $\partial D$. We can assume $J(\varphi)=1$ on $\partial D$.
This is because for any smooth function $\eta$,
\begin{equation*}
   J(\eta\varphi)|_{b D}=\eta^{n+1}J( \varphi)|_{b D},
\end{equation*}we can choose $\eta=J( \varphi)^{\frac 1{n+1}}.$

Now suppose that for $s\geq 2$, we have
$
   J( \varphi)=1+O(\varphi^{s-1}).
$  We want to solve this equation for $s$ replaced by $s+1$, i.e.
\begin{equation}\label{eq:Jeq}
   J( \varrho)= 1+O(\varrho^{s }).
\end{equation}

Take $\varrho=\varphi+\eta\varphi^s$. Then
\begin{equation} \label{eq:J-1}\begin{split}
 -J(\varrho)\Omega_{2n} &=\varphi (\triangle \varrho)^n-2n(1+s\eta\varphi^{s-1})^2 d^{0'}\varphi\wedge  d^{1'}\varphi\wedge (\triangle \varrho)^{n-1}+O(\varphi^s),
 \end{split} \end{equation}
where
 \begin{equation}\label{eq:J-2}
    \triangle \varrho=(1+s\eta\varphi^{s-1})\triangle \varphi+s(s-1)\eta\varphi^{s-2} d^{0'}\varphi\wedge  d^{1'}\varphi+s \varphi^{s-1}  (d^{0'} \eta\wedge
    d^{1'}\varphi+d^{0'} \varphi    \wedge  d^{1'}\eta),
 \end{equation}
and so
 \begin{equation}\label{eq:J-3}
    d^{0'}\varphi\wedge  d^{1'}\varphi\wedge (\triangle \varrho)^{n-1}=(1+s\eta\varphi^{s-1})^{n-1}d^{0'}\varphi\wedge  d^{1'}\varphi \wedge (\triangle
    \varphi)^{n-1} ,
 \end{equation}
 by $d^{A'}\varphi\wedge d^{A'}\varphi=0$.
 By substituting \eqref{eq:J-2}-\eqref{eq:J-3} into \eqref{eq:J-1} and absorbing terms having factor $ \varphi^s $ into $O(\varphi^s)$, we find that
 \begin{equation*}\label{eq:Ju}\begin{split}
- J(& \varrho) \Omega_{2n}=  (1+s\eta\varphi^{s-1})^{n+ 1} \varphi \left [\triangle \varphi+s(s-1)(1+s\eta\varphi^{s-1})^{-1}\eta\varphi^{s-2} d^{0'}\varphi\wedge
d^{1'}\varphi\right]^n\\
&\qquad\qquad-  2n(1+s\eta\varphi^{s-1})^{n+1}d^{0'}\varphi\wedge  d^{1'}\varphi \wedge (\triangle \varphi)^{n-1}+O(\varphi^s)\\&=
(1+s\eta\varphi^{s-1})^{n+1}\left [ - J(\varphi )\Omega_{2n}+ns(s-1)(1+s\eta\varphi^{s-1})^{-1}\eta\varphi^{s-1} d^{0'}\varphi\wedge  d^{1'}\varphi\wedge
(\triangle
\varphi)^{n-1}  \right]+O(\varphi^s)\\&=
(1+s\eta\varphi^{s-1})^{n+1}\left [- J(\varphi )\Omega_{2n}+ \frac 12s(s-1)(1+s\eta\varphi^{s-1})^{-1}\eta\varphi^{s-1} J(\varphi )\Omega_{2n}\right]+O(\varphi^s)
\\&=
\frac {-J(\varphi )\Omega_{2n}}{1-[s(n+1)- s(s-1)/2] \eta\varphi^{s-1}} +O(\varphi^s),
 \end{split} \end{equation*}
where we have used
 \begin{equation*}
    (1+s\eta\varphi^{s-1})  \varphi=\varphi+O(\varphi^s),\qquad (\varphi^{s-1})^2=O(\varphi^s).
 \end{equation*}

 Thus $ J( \varrho)= 1+O(\varphi^s)$ is equivalent to
 \begin{equation*}
    \frac s2(2n+3- s )\eta\varphi^{s-1}=1-J(\varphi )+O(\varphi^s)
 \end{equation*}
  Namely, if we   take
 \begin{equation*}
   \varrho=\varphi\left(1+2\frac {1 -J(\varphi )}{s(2n+3- s )}\right),
 \end{equation*}
 \eqref{eq:Jeq} is solvable for $s=2,\ldots,2n+2 $.
\end{proof}

We call the  defining density given by Theorem  \ref{thm:Fefferman-density} a {\it Fefferman defining density}. If  $u=-\frac 1\varrho  $ is a PSH section of
$\mathbb{ R}_-[-1]$,
it defines a projectively
invariant positive $2$-form
\begin{equation*}
   \triangle u= \frac {\triangle \varrho}{\varrho^2}-\frac {2d^{0'}\varrho\wedge d^{1'}\varrho}{\varrho^3},
\end{equation*}
on $D$.
This can be viewed as the  quaternionic  version of the Blaschke metric on strictly convex domains \cite{Marugame18,Sasaki85}. Discussion of this form and
applications to  quaternionic strictly pseudoconvex boundary will appear in   the subsequent part.

\end{document}